\documentclass{icmart}


\contact[yukinobu.toda@ipmu.jp]{Kavli Institute for the Physics and 
Mathematics of the Universe, University of Tokyo,
5-1-5 Kashiwanoha, Kashiwa, 277-8583, Japan.}





\newtheorem{theorem}{Theorem}[section]
\newtheorem{corollary}[theorem]{Corollary}
\newtheorem{lemma}[theorem]{Lemma}
\newtheorem{proposition}[theorem]{Proposition}
\newtheorem{conjecture}[theorem]{Conjecture}


\theoremstyle{definition}
\newtheorem{definition}[theorem]{Definition}
\newtheorem{remark}[theorem]{Remark}
\newtheorem{exam}[theorem]{Example}

\newcommand{\Hom}{\operatorname{Hom}}

\usepackage[all,ps,dvips]{xy}

\title[Derived category of coherent sheaves and counting invariants]
{Derived category of coherent sheaves and counting invariants}

\author[Yukinobu Toda]
{Yukinobu Toda\thanks{
The author is supported by World Premier 
International Research Center Initiative
(WPI initiative), MEXT, Japan, 
and also by Grant-in Aid
for Scientific Research grant (22684002)
from the Ministry of Education, Culture,
Sports, Science and Technology, Japan.}}

\begin{document}

\begin{abstract}
We survey recent developments on 
Donaldson-Thomas theory, Bridgeland stability conditions and
wall-crossing formula. We emphasize the 
importance of the counting theory of Bridgeland 
semistable objects in the 
derived category of coherent sheaves 
to find a hidden property of the 
generating series of Donaldson-Thomas invariants. 
\end{abstract}

\begin{classification}
Primary 14N35; Secondary 18E30.
\end{classification}

\begin{keywords}
Donaldson-Thomas invariants, Bridgeland 
stability conditions. 
\end{keywords}

\maketitle

\section{Introduction}
\subsection{Moduli spaces and invariants}
The study of the
\textit{moduli spaces}
is a traditional research subject in algebraic geometry. 
They are schemes or stacks 
whose points bijectively 
correspond to fixed kinds of 
algebro-geometric 
objects, say curves, sheaves on a fixed 
variety, etc. 
These moduli spaces are interesting not only 
in algebraic geometry, but also in connection with 
other research fields such as 
number theory, differential geometry and string theory.
In general it is not easy to study the 
geometric properties of the moduli spaces. 
Instead one tries to 
construct and study the invariants of the moduli spaces, 
e.g. their (weighted) Euler characteristics, 
virtual Poincar\'e or Hodge polynomials, 
integration of the virtual cycles via 
deformation-obstruction theory.
It has been observed that
the best way to study such invariants 
is taking the generating series. 
Sometimes the generating series 
defined from the moduli spaces
have beautiful forms and properties.
Let us observe this phenomenon 
for some rather amenable examples. 
For a quasi-projective variety $X$ (in this article, we always 
assume that the varieties are defined over $\mathbb{C}$), 
the \textit{Hilbert scheme of n-points}
denoted by $\mathrm{Hilb}_n(X)$
is the moduli space of zero dimensional 
subschemes $Z \subset X$
 such that $\chi(\mathcal{O}_Z)=n$.   
 It contains an open subset
corresponding to $n$-distinct points in $X$, 
and the geometric structures of its complement
is in general 
complicated. 
Nevertheless if $X$ is non-singular, 
 the generating series of 
the Euler characteristics of $\mathrm{Hilb}_n(X)$
have the following 
beautiful forms~\cite{Got}, \cite{Cheah}
\begin{align}\label{hilb0}
\sum_{n\ge 0} \chi(\mathrm{Hilb}_n(X))q^n 
=\left\{  \begin{array}{ll}
(1-q)^{-\chi(X)}, & \dim X=1 \\
\prod_{m\ge 1}(1-q^m)^{-\chi(X)}, & \dim X=2 \\
\prod_{m\ge 1}(1-q^m)^{-m\chi(X)}, & \dim X=3. 
\end{array}
\right.
\end{align}
In the case 
$X=\mathbb{C}^d$, the
torus localization shows that 
$\chi(\mathrm{Hilb}_n(\mathbb{C}^d))$ coincides with the 
number of $d$-dimensional partitions of $n$. 
The resulting product formulas are consequences of 
enumerative combinatorics, as in~\cite{Stan}. 
A general case is reduced to the case $X=\mathbb{C}^d$. 

\subsection{Curve counting invariants}
The study of the invariants of the moduli spaces of curves inside a 
variety is more important and interesting, because of 
its connection with \textit{world sheet counting}
in string theory. 
A particularly important case is 
when $X$ is a \textit{Calabi-Yau 3-fold}, i.e. 
$X$ has a trivial canonical line bundle with 
$H^1(X, \mathcal{O}_X)=0$, as 
the string theory predicts our universe to be 
the product of the four dimensional space time 
with a Calabi-Yau 3-fold. 
Similarly to $\mathrm{Hilb}_n(X)$, we
denote by $\mathrm{Hilb}_n(X, \beta)$
the \textit{Hilbert scheme of curves}
inside $X$, that is the moduli space of projective subschemes 
$C \subset X$ with $\dim C \le 1$, 
$[C]=\beta$ and $\chi(\mathcal{O}_{C})=n$. 
The following result was obtained by the author in 2008, 
and plays a key role in this article: 
\begin{theorem}[\cite{Tolim2}, \cite{Tcurve1}]\label{intro:thm}
Let $X$ be a smooth projective Calabi-Yau 3-fold. 
Then for fixed $\beta \in H_2(X, \mathbb{Z})$, 
the quotient series
\begin{align}\label{intro:hilb}
\frac{\sum_{n\in \mathbb{Z}} \chi(\mathrm{Hilb}_n(X, \beta))q^n}{\sum_{n\ge 0} \chi(\mathrm{Hilb}_n(X))q^n}
\end{align}
is the Laurent expansion of a rational function of $q$, invariant 
under $q\leftrightarrow 1/q$. 
\end{theorem}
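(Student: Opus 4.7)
My plan is to reinterpret both the numerator and denominator in (\ref{intro:hilb}) as generating series of Euler characteristics of moduli spaces of two-term complexes in a tilted heart of $D^b(X)$, and then relate them to the Pandharipande--Thomas (PT) generating series via wall-crossing in Joyce's motivic Hall algebra. The functional equation and rationality will then be extracted from a derived-duality involution on the PT side together with a further wall-crossing at a limit stability condition.

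First I would write an ideal sheaf $I_C = [\mathcal{O}_X \to \mathcal{O}_C]$ of a one-dimensional subscheme $C \subset X$ as a two-term complex, viewing $\mathrm{Hilb}_n(X, \beta)$ as a moduli of rank-one torsion-free DT-type objects with prescribed Chern character. In parallel, introduce the moduli $P_n(X, \beta)$ of PT stable pairs $(F, s)$, where $F$ is a pure one-dimensional sheaf with $[F] = \beta$, $\chi(F) = n$ and $s \colon \mathcal{O}_X \to F$ has zero-dimensional cokernel; these also give two-term complexes $[\mathcal{O}_X \to F]$. Both families live in a heart $\mathcal{A}_X \subset D^b(X)$ obtained by tilting the standard heart along a torsion pair separating sheaves of dimension $\le 1$ from those generated by $\mathcal{O}_X$.

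Next I would introduce a one-parameter family $\sigma_t$ of weak Bridgeland stability conditions on $\mathcal{A}_X$ and identify two chambers: one where $\sigma_t$-semistability recovers DT objects (giving $\mathrm{Hilb}_n(X, \beta)$) and another where it recovers PT objects (giving $P_n(X, \beta)$). Applying Joyce's Hall algebra wall-crossing, passing through Behrend's ``no-pole'' argument to extract the Euler-characteristic invariants, yields the numerical identity
\begin{equation*}
\sum_{n \in \mathbb{Z}} \chi(\mathrm{Hilb}_n(X, \beta)) q^n \;=\; \Bigl(\sum_{n \in \mathbb{Z}} \chi(P_n(X, \beta)) q^n\Bigr) \cdot \sum_{n \ge 0} \chi(\mathrm{Hilb}_n(X)) q^n,
\end{equation*}
so the quotient series in (\ref{intro:hilb}) equals the PT series $\sum_n \chi(P_n(X, \beta)) q^n$.

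It remains to show this PT series is the Laurent expansion of a rational function invariant under $q \leftrightarrow 1/q$. The symmetry comes from the derived dual $\mathbb{D} = \mathbf{R}\mathcal{H}om(-, \mathcal{O}_X)[1]$, which on a Calabi-Yau 3-fold sends a PT object with invariants $(\beta, n)$ to another PT object with invariants $(\beta, -n)$, giving an isomorphism $P_n(X, \beta) \cong P_{-n}(X, \beta)$. For rationality I would perform a second wall-crossing at a limit stability condition whose semistable objects, after another Hall-algebra comparison, express the PT series as a finite polynomial combination of the geometric-type generating functions $(1 - q)^{-k}$, visibly rational in $q$.

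The main obstacle will be the wall-crossing step. Neither $\mathrm{Hilb}_n(X, \beta)$ nor $P_n(X, \beta)$ is proper in general and the naive Euler characteristic is not additive in short exact sequences, so one cannot apply Joyce's formula term-by-term; the remedy is to combine his motivic Hall-algebra integration map with the logarithm/exponential construction to produce integer BPS-type invariants from which (\ref{intro:hilb}) is reconstructed. Secondarily, one must verify that only finitely many walls contribute between the DT and PT chambers for each fixed $\beta$, which rests on a Bogomolov--Gieseker-type boundedness result for $\sigma_t$-semistable objects of fixed class in $\mathcal{A}_X$.
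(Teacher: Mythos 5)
Your first half is sound and matches the route the paper actually takes: the reduction of the quotient series (\ref{intro:hilb}) to the stable pair series $\sum_n P_{n,\beta}q^n$ via Hall-algebra wall-crossing in the category of D0-D2-D6 bound states is exactly Theorem~\ref{thm:DTPT} (DT/PT correspondence), and the technical caveats you flag (non-additivity of $\chi$, finiteness of walls, the logarithm in the Hall algebra) are the right ones. The fatal problem is the symmetry step. You claim the derived dual induces an isomorphism $P_n(X,\beta)\cong P_{-n}(X,\beta)$. This is false: the derived dual of a stable-pair complex $[\mathcal{O}_X\to F]$ is not again a stable-pair complex, and PT stability is not preserved by $\mathbf{R}\mathcal{H}om(-,\mathcal{O}_X)$. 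Moreover, if such an isomorphism existed, then since $P_n(X,\beta)=\emptyset$ for $n\ll 0$ the PT series would be a finite Laurent polynomial, contradicting e.g.\ the resolved conifold computation in Example~\ref{exii}(ii), where the reduced series in the fiber class is $q/(1+q)^2=q-2q^2+3q^3-\cdots$ with infinitely many nonzero coefficients. The paper is explicit that the $q\leftrightarrow 1/q$ invariance holds only after analytic continuation of the rational function, not termwise on coefficients, so no direct identification of moduli spaces can produce it.

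The actual argument requires a \emph{second} wall-crossing inside the space of weak stability conditions on $\mathcal{D}_X=\langle\mathcal{O}_X,\mathrm{Coh}_{\le 1}(X)\rangle_{\rm tr}$, from the PT chamber ($\theta\to 1$) down to the self-dual point $\theta=1/2$. It is the invariants $L_{n,\beta}$ counting $\sigma_{1/2}$-semistable objects that are genuinely fixed by the derived dual, giving $L_{n,\beta}=L_{-n,\beta}$ together with vanishing for $\lvert n\rvert\gg 0$; and the wall-crossing terms between $\theta=1/2$ and $\theta\to 1$ are controlled by the invariants $N_{n,\beta}$ counting one-dimensional semistable sheaves, which satisfy both $N_{n,\beta}=N_{-n,\beta}$ and the periodicity $N_{n,\beta}=N_{n+H\beta,\beta}$. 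Rationality and the functional equation are then read off from the closed formula of Theorem~\ref{thm:main}, in which the symmetric finitely-supported $L$-part and the symmetric periodic $N$-part combine; neither property is visible on the PT moduli spaces themselves. So you have the correct framework but are missing the essential intermediate family of self-dual objects, and your proposed shortcut through a duality on $P_n(X,\beta)$ cannot be repaired.
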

Note that the denominator of (\ref{intro:hilb}) 
is given by the formula (\ref{hilb0}). 
A typical example of a rational function of $q$ invariant under
$q \leftrightarrow 1/q$ is 
$q/(1+q)^2 =q-2q^2 + 3q^3 - \cdots$. 
We remark that the invariance of $q \leftrightarrow 1/q$ does 
not say the invariance of the generating series 
after the formal substitution $q \mapsto 1/q$, 
but so after taking the analytic continuation of the function 
(\ref{intro:hilb}) from $\lvert q \rvert \ll 1$ to 
$\lvert q \rvert \gg 1$. 
The above result was conjectured in~\cite{Li-Qin}
as the \textit{unweighted version}
of the rationality conjecture of rank one 
Donaldson-Thomas (DT) invariants by 
Maulik-Nekrasov-Okounkov-Pandharipande (MNOP)~\cite{MNOP}.
The rationality conjecture 
was proposed in order to 
formulate the \textit{Gromov-Witten/Donaldson-Thomas correspondence conjecture}
comparing two kinds of curve counting invariants on Calabi-Yau 3-folds. 

The DT invariant was introduced by Thomas~\cite{Thom}, 
as a holomorphic analogue of Casson invariants of 
real 3-manifolds.
It counts stable coherent sheaves on a Calabi-Yau 3-fold, 
 and is a higher dimensional generalization of 
Donaldson invariants on algebraic surfaces. 
For a Calabi-Yau 3-fold $X$, an ample divisor $H$ on $X$ 
and a cohomology class $v\in H^{\ast}(X, \mathbb{Q})$, 
the DT invariant $\mathrm{DT}_H(v) \in \mathbb{Z}$
is defined to be the degree of the zero 
dimensional virtual fundamental cycle on the 
moduli space of $H$-stable 
coherent sheaves $E$ on $X$ with $\mathrm{ch}(E)=v$. 
It also coincides with the weighted Euler characteristic 
with respect to the Behrend's constructible function
on that moduli space~\cite{Beh}. 
The DT invariants were later
generalized by Joyce-Song~\cite{JS} and Kontsevich-Soibelman~\cite{K-S}
so that they also count strictly $H$-semistable sheaves.
The generalized DT invariants involve
the Behrend functions and the motivic 
Hall algebras in the definition, 
and they are $\mathbb{Q}$-valued. \\
\quad  
The Hilbert scheme of points or curves on a Calabi-Yau 3-fold
is also interpreted as a moduli space of stable sheaves, 
by assigning a subscheme $C \subset X$ with 
its ideal sheaf $I_C \subset \mathcal{O}_X$. 
The resulting DT invariant is the weighted Euler characteristic 
of the Hilbert scheme of points or curves, and in particular it is independent 
of $H$. In this sense, the invariant $\chi(\mathrm{Hilb}_n(X, \beta))$ is 
the unweighted version of the DT invariant, 
which coincides with the honest DT invariant up to sign if 
$\mathrm{Hilb}_n(X, \beta)$ 
is non-singular.  
The result 
of Theorem~\ref{intro:thm}
for the weighted version
was later proved by Bridgeland~\cite{BrH}. 
The rationality property and the invariance of
$q \leftrightarrow 1/q$ of the series (\ref{intro:hilb})
are not visible if we just look at the moduli spaces of 
curves or points. 
Such hidden properties of the series (\ref{intro:hilb}) 
are visible after we develop new moduli theory and invariants 
of objects in the derived category of coherent sheaves. 

\subsection{Derived category of coherent sheaves}
Recall that for a variety $X$, the bounded derived category of 
coherent sheaves $D^b \mathrm{Coh}(X)$
is defined to be the localization by  
quasi-isomorphisms of the homotopy category of the bounded
complexes of coherent sheaves on $X$. 
The derived category is no longer an abelian category, 
but has a structure of a triangulated category. 
It was originally introduced by Grothendieck in 1960's 
in order to formulate the relative version of 
Serre duality theorem, known as Grothendieck duality theorem. 
Later it was observed by Mukai~\cite{Mu1} that 
an abelian variety and its dual abelian variety, which 
are not necessary isomorphic in general, have
the equivalent derived categories of coherent sheaves. 
This phenomena suggests that 
the category $D^b \mathrm{Coh}(X)$ has more symmetries than 
the category of coherent sheaves, as the 
latter category is known to reconstruct the original variety. 
Such a phenomena has drawn much attention since 
Kontsevich
proposed the \textit{Homological mirror symmetry conjecture}
in~\cite{Kon}. It 
predicts an equivalence between the derived category of 
coherent sheaves on a Calabi-Yau manifold and 
the derived Fukaya category of its mirror manifold, 
based on an insight that the derived category 
$D^b \mathrm{Coh}(X)$ is a mathematical 
framework of D-branes of type B in string theory. 
There have been several developments in constructing
Mukai type derived equivalences between 
non-isomorphic varieties~\cite{Br1}, \cite{Or1}, \cite{BorCal}, \cite{Ka1}, 
and non-trivial autequivalences~\cite{ST}, \cite{HT}, 
based on the ideas from mirror symmetry. 
Furthermore such Mukai type equivalences have been discovered beyond 
algebraic geometry. 
For instance, derived McKay correspondence~\cite{BKR}
gives an equivalence between the derived 
category of finite group representations and the 
derived category of coherent sheaves 
on the crepant resolution of the quotient singularity. 
This is now interpreted as a special 
case of equivalences between 
usual commutative varieties and 
non-commutative varieties
in the context of Van den Bergh's non-commutative 
crepant resolutions~\cite{MVB}. 
There also exists an Orlov's equivalence~\cite{Orsin}
between the derived category of coherent 
sheaves on a Calabi-Yau hypersurface in the projective
space and the category of graded matrix factorizations of 
the defining equation of it. 
This result, called Landau-Ginzbrug/Calabi-Yau correspondence, 
was also motivated by mirror symmetry. 
Now it is understood that the derived 
categories have more symmetries than 
the categories of coherent sheaves. 
Our point of view is to make the
hidden properties of the generating 
series of DT type invariants visible via 
symmetries in the derived categories. 

\subsection{Bridgeland stability conditions}
The idea of applying derived categories to 
the study of generating series of DT type invariants 
suggests an importance of 
constructing 
moduli spaces and invariants of 
objects in the derived categories. 
Note that in constructing the original 
DT invariants, we need to fix an ample 
divisor on a Calabi-Yau 3-fold $X$, 
and the associated stability 
condition on $\mathrm{Coh}(X)$
in order to construct a good moduli 
space of stable sheaves. The notion 
of stability conditions on triangulated categories, 
in particular on derived categories of coherent sheaves, 
was introduced by Bridgeland~\cite{Brs1} 
as a mathematical framework of Douglas's $\Pi$-stability~\cite{Dou2}
in string theory. 
For a triangulated category $\mathcal{D}$, 
a Bridgeland stability condition on it
roughly consists of data 
$\sigma=(Z, \{\mathcal{P}(\phi)\}_{\phi \in \mathbb{R}})$ 
for a group 
homomorphism $Z \colon K(\mathcal{D}) \to \mathbb{C}$
called the \textit{central charge}, 
and the collection of subcategories $\mathcal{P}(\phi) \subset \mathcal{D}$
for $\phi \in \mathbb{R}$
whose objects are called
 $\sigma$\textit{-semistable objects with phase} $\phi$.
The main result by Bridgeland~\cite{Brs1}
is to show that the set of `good' stability 
conditions on $\mathcal{D}$ forms a complex manifold. 
This complex manifold is in particular important 
when $\mathcal{D}=D^b \mathrm{Coh}(X)$ for a Calabi-Yau manifold $X$. 
In this case, the space of 
stability conditions $\mathrm{Stab}(X)$ is 
expected to contain the universal covering space 
of the moduli space of complex structures of 
a mirror manifold of $X$. 
So far the space $\mathrm{Stab}(X)$ has been 
studied in several situations, e.g. 
$X$ is a curve~\cite{Brs1}, \cite{Mac2},
 $X$ is a K3 surface~\cite{Brs2}, 
$X$ is a some non-compact Calabi-Yau 
3-fold~\cite{Brs4},~\cite{BaMa},~\cite{Tst},~\cite{Tst2}.
On the other hand, there has been a serious issue in 
studying Bridgeland stability conditions
on projective Calabi-Yau 3-folds which are likely to be the most 
important case: 
we are not able to prove the existence of Bridgeland 
stability conditions on smooth projective Calabi-Yau 3-folds.
In~\cite{BMT}, the existence problem is reduced to showing 
a conjectural Bogomolov-Gieseker type inequality evaluating the third
Chern character of certain two term complexes of coherent sheaves. 
However proving that inequality conjecture seems to 
require a new idea.  

\subsection{New invariants via derived categories}\label{subsec:new}
Let $X$ be a smooth projective Calabi-Yau 3-fold. 
We expect that, for a given $\sigma \in \mathrm{Stab}(X)$
and $v\in H^{\ast}(X, \mathbb{Q})$, there 
exists the DT type invariant $\mathrm{DT}_{\sigma}(v) \in \mathbb{Q}$
which counts $\sigma$-semistable objects $E\in D^b \mathrm{Coh}(X)$
with $\mathrm{ch}(E)=v$. As we mentioned, there is 
a serious issue in constructing a Bridgeland stability condition on 
projective Calabi-Yau 3-folds, but let us ignore this for a while. 
For an ample divisor $H$ on $X$, we expect that 
the classical $H$-stability appears as a certain special 
limiting point in $\mathrm{Stab}(X)$ called the \textit{large volume limit}. 
If we take $\sigma \in \mathrm{Stab}(X)$ near the large volume limit 
point, then we expect the equality $\mathrm{DT}_{\sigma}(v)=\mathrm{DT}_H(v)$. 
On the other hand, suppose that 
there is an autequivalence $\Phi$ of $D^b \mathrm{Coh}(X)$
and $\tau \in \mathrm{Stab}(X)$
so that the equality 
$\mathrm{DT}_{\tau}(v)=\mathrm{DT}_{\tau}(\Phi_{\ast}v)$
holds for any $v$. Then the generating series of the 
invariants $\mathrm{DT}_{\tau}(v)$
is preserved by the variable change induced by 
$v\mapsto \Phi_{\ast}v$. 
If we are able to relate 
$\mathrm{DT}_{\sigma}(v)$ and $\mathrm{DT}_{\tau}(v)$, 
then it would imply the hidden symmetry 
of the generating series of classical DT invariants $\mathrm{DT}_{H}(v)$
with respect $v\mapsto \Phi_{\ast}v$. 
The relationship between $\mathrm{DT}_{\sigma}(v)$ and $\mathrm{DT}_{\tau}(v)$
is studied by the wall-crossing phenomena:
there should be a wall and chamber structure on the 
space $\mathrm{Stab}(X)$ so that the invariants 
$\mathrm{DT}_{\ast}(v)$ are constant on a chamber but jumps 
if $\ast$ crosses a wall. 
The wall-crossing formula of the invariants $\mathrm{DT}_{\ast}(v)$
should be described by a general framework 
established by Joyce-Song~\cite{JS}, Kontsevich-Soibelman~\cite{K-S}, 
using stack theoretic Hall algebras. \\
\quad 
However as we mentioned, we are not able to prove 
$\mathrm{Stab}(X) \neq \emptyset$, so the above 
story is the next stage after proving the non-emptiness. 
The idea of proving Theorem~\ref{intro:thm} 
was to introduce `weak' Bridgeland stability conditions
on triangulated categories, 
and 
apply the above story for the space of weak stability conditions
on the subcategory of $D^b \mathrm{Coh}(X)$
generated by $\mathcal{O}_X$ and one or zero dimensional sheaves. 
The latter subcategory is 
called the category of \textit{D0-D2-D6 bound states}. 
The notion of weak stability conditions is a kind of 
limiting degenerations of Bridgeland stability conditions, 
and it is
 a coarse version of Bayer's polynomial stability conditions~\cite{Bay}, 
the author's limit stability conditions~\cite{Tolim}. 
It is easier to construct weak stability conditions and 
enough to prove Theorem~\ref{intro:thm} applying the above 
story. The 
derived dual $E \mapsto \mathbf{R} \mathcal{H} om(E, \mathcal{O}_X)$, 
an autequivalence of $D^b \mathrm{Coh}(X)$, 
turned out to be responsible for the 
hidden symmetric property of $q \leftrightarrow 1/q$
of the series (\ref{intro:hilb}) in the above story. \\
\quad 
The idea of proving Theorem~\ref{intro:thm} has turned out to be 
useful in proving several other interesting properties of 
DT type invariants, say DT/PT correspondence~\cite{Tcurve1}, \cite{BrH}
conjectured by Pandharipande-Thomas~\cite{PT}. 
We refer to~\cite{Tcurve2}, \cite{Cala}, \cite{ToBPS}, \cite{Tcurve3}, \cite{Trk2}, \cite{Stop}, \cite{Nhig}, \cite{Tst3}, \cite{TodK3}, \cite{TodBG}
for other works relating the above story.

\subsection{Plan of this article}
In Section~\ref{sec:DT}, we review and 
survey recent developments of Donaldson-Thomas theory. 
In Section~\ref{sec:Br}, we survey the developments on 
Bridgeland stability conditions. 
In Section~\ref{sec:Con}, 
we discuss open problems on DT theory and 
Bridgeland stability conditions.

\section{Donaldson-Thomas theory}\label{sec:DT}
\subsection{Moduli spaces of semistable sheaves}
Let $X$ be a smooth projective variety and $H$ an ample 
divisor on $X$. For an object $E \in \mathrm{Coh}(X)$, 
its Hilbert polynomial is given by 
\begin{align*}
\chi(E \otimes \mathcal{O}_{X}(mH))=
a_d m^d + a_{d-1} m^{d-1} + \cdots
\end{align*}
for $a_i \in \mathbb{Q}$
by the Riemann-Roch theorem. 
Here $a_d \neq 0$ and $d$ is the dimension of the support of $E$. 
The reduced Hilbert polynomial $\overline{\chi}_H(E, m)$
is defined to be $\chi(E\otimes \mathcal{O}_X(mH))/a_d$. 
\begin{definition}
An object $E \in \mathrm{Coh}(X)$ is $H$\textit{-(semi)stable}
if for any subsheaf $0\neq F \subsetneq E$, 
we have $\dim \mathrm{Supp}(F)= \dim \mathrm{Supp}(E)$ and 
the inequality 
$\overline{\chi}_H(F, m) <(\le) \overline{\chi}_H(E, m)$
holds 
for $m\gg 0$. 
\end{definition}
\begin{remark}
Note that if $E$ is torsion free, 
then $\overline{\chi}_H(F, m)=m^d + c \cdot \mu_H(E) m^{d-1} + O(m^{d-2})$
where $d=\dim X$, 
$\mu_H(E)=c_1(E)H^{d-1}/\mathrm{rank}(E)$, and $c$ is some constant. 
Hence the $H$-(semi)stability is the refinement 
of $H$-slope (semi)stability defined by the slope function
$\mu_H(\ast)$. 
\end{remark}
Let $\mathcal{C}oh(X)$ be the 2-functor from 
the category of complex schemes to the groupoid, whose
$S$-valued points form the groupoid 
of flat families of coherent sheaves on $X$ over $S$. 
The 2-functor $\mathcal{C}oh(X)$ forms 
a stack, which is known to be an Artin stack
locally of finite type, but neither finite type nor 
separated. 
The situation becomes better if we consider the substacks
for $v \in H^{\ast}(X, \mathbb{Q})$
\begin{align*}
\mathcal{M}_H^{\rm{s}}(v) \subset \mathcal{M}_H^{\rm{ss}}(v) \subset
\mathcal{C}oh(X).
\end{align*}
Here $\mathcal{M}_H^{\rm{s(ss)}}(v)$ is the substack of $H$-(semi)stable
$E \in \mathrm{Coh}(X)$ with $\mathrm{ch}(E)=v$, which 
is an open substack of $\mathcal{C}oh(X)$. 
The stack $\mathcal{M}_H^{\rm{ss}}(v)$ is 
of finite type but not separated in general. 
The stack $\mathcal{M}_H^{\rm{s}}(v)$ is 
of finite type, separated, and 
a $\mathbb{C}^{\ast}$-gerb over a 
quasi-projective scheme $M_H^{\rm{s}}(v)$. 
The scheme $M_H^{\rm{s}}(v)$
is projective if 
$\mathcal{M}_H^{\rm{s}}(v)=\mathcal{M}_H^{\rm{ss}}(v)$.

\subsection{Donaldson-Thomas invariants}
Let $X$ be a smooth projective 3-fold. 
We say it is a \textit{Calabi-Yau 3-fold} if 
$K_X=0$ and $H^1(X, \mathcal{O}_X)=0$. 
A typical example is a quintic hypersurface in $\mathbb{P}^4$. 
Let $H$ be an ample divisor on $X$, $v$
an element in $H^{\ast}(X, \mathbb{Q})$, 
and consider the moduli scheme $M_H^{\rm{s}}(v)$. 
A standard deformation 
theory of sheaves (cf.~\cite{Hu})
shows that the tangent space at $[E] \in M_H^{\rm{s}}(v)$
is given by $\mathrm{Ext}^1(E, E)$, 
and the obstruction space is given by 
$\mathrm{Ext}^2(E, E)$. 
The Calabi-Yau condition and the Serre duality 
implies that the latter space is dual to 
$\mathrm{Ext}^1(E, E)$. Hence the 
virtual dimension at $[E]$, defined to 
be the dimension of the tangent space minus 
the dimension of the obstruction space, is 
zero which is independent of $E$. 
Based on this observation, Thomas~\cite{Thom}
constructed two term complex of vector bundles $\mathcal{E}^{\bullet}$
on 
$M_H^{\rm{s}}(v)$ and a 
morphism $\mathcal{E}^{\bullet} \to L_{M_H^{\rm{s}}(v)}$
in $D(M_H^{\rm{s}}(v))$, giving 
a symmetric perfect obstruction theory 
in the sense of Behrend-Fantechi~\cite{BF},~\cite{BBr}. 
By the construction in~\cite{BF},  
there is the associated zero dimensional 
virtual cycle $[M_H^{\rm{s}}(v)]^{\rm{vir}}$
on $M_H^{\rm{s}}(v)$, and
we are able to take its degree if $M_H^{\rm{s}}(v)$ is projective. 
The DT invariant is defined as follows: 
\begin{definition}
If $\mathcal{M}_H^{\rm{s}}(v)=\mathcal{M}_H^{\rm{ss}}(v)$ holds, 
we define $\mathrm{DT}_H(v) \in \mathbb{Z}$ to be
the degree of $[M_H^{\rm{s}}(v)]^{\rm{vir}}$. 
\end{definition}
The above construction via the virtual 
cycle easily shows that the DT invariant is invariant
under deformations of complex structures of $X$. 
However in practice, it is more 
convenient to describe the DT invariant 
in terms of 
Behrend's constructible function~\cite{Beh}. 
The Behrend function is easily described if we
use the following 
result by Joyce-Song~\cite{JS}: 
\begin{theorem}[\cite{JS}]\label{thm:CS}
for any $p \in M_H^{\rm{s}}(v)$, 
there is an analytic open subset $p \in U \subset M_H^{\rm{s}}(v)$, 
a complex manifold $V$ and a holomorphic function 
$f \colon V \to \mathbb{C}$ such that 
$U$ is isomorphic to $\{df=0\}$. 
\end{theorem}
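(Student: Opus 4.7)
\emph{Proof plan.} The strategy is to build an analytic Kuranishi chart at $p=[E]$, identify the obstruction map with a holomorphic one-form on that chart via Calabi--Yau Serre duality, and then exhibit it as the differential of a holomorphic potential. The zero locus of this differential will then automatically be analytically isomorphic to the germ of $M_{H}^{\rm{s}}(v)$ at $p$, because under the Calabi--Yau identification the vanishing of the one-form coincides with the vanishing of the Kuranishi map.

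First, I would set up the deformation theory at $E$. Since $E$ is $H$-stable, $\mathrm{Hom}(E,E)=\mathbb{C}$, and the formal deformations of $E$ are controlled by the dg Lie algebra $\mathfrak{g}^{\bullet}=\mathrm{RHom}(E,E)[1]$, which I would realise concretely by taking a locally free resolution of $E$ and applying the Dolbeault functor, producing a soft dg Lie model whose cohomology is $\mathfrak{g}^{i}=\mathrm{Ext}^{i+1}(E,E)$. The Calabi--Yau assumption $K_{X}\cong\mathcal{O}_{X}$ together with Serre duality supplies a non-degenerate invariant pairing $\langle-,-\rangle$ of cohomological degree $-1$ on $\mathfrak{g}^{\bullet}$, upgrading it to a cyclic (Calabi--Yau of dimension $3$) structure; the tangent--obstruction duality $\mathrm{Ext}^{2}(E,E)\cong\mathrm{Ext}^{1}(E,E)^{\vee}$ already used in the excerpt is the degree-zero shadow of this pairing.

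Next, I would produce the analytic slice. Fix a Hermitian metric on $X$ and on the chosen resolution, pass to the harmonic gauge, and combine elliptic regularity for the $\bar\partial$-Laplacian with the holomorphic implicit function theorem to solve the Maurer--Cartan equation modulo gauge. This gives an open polydisk $V\subset\mathrm{Ext}^{1}(E,E)$, a holomorphic Kuranishi map $\kappa\colon V\to\mathrm{Ext}^{2}(E,E)$ with $\kappa(0)=0$ and $d\kappa(0)=0$, and an analytic isomorphism $\{\kappa=0\}\cong U$ for some open neighbourhood $U$ of $p$ in $M_{H}^{\rm{s}}(v)$. Composing $\kappa$ with the Calabi--Yau pairing turns it into a holomorphic one-form $\omega\in\Omega_{V}^{1}$. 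The cyclic symmetry of the higher brackets of $\mathfrak{g}^{\bullet}$ forces $\omega$ to be closed, and after shrinking $V$ Poincar\'e's lemma yields a unique holomorphic $f\colon V\to\mathbb{C}$ with $f(0)=0$ and $df=\omega$; concretely $f$ is the restriction to the harmonic slice of the holomorphic Chern--Simons functional
\begin{equation*}
a\longmapsto \int_{X}\mathrm{Tr}\!\left(\frac{1}{2}\,a\wedge\bar\partial a+\frac{1}{3}\,a\wedge a\wedge a\right)\wedge\Omega,
\end{equation*}
where $\Omega\in H^{0}(X,K_{X})$ is a nowhere-vanishing holomorphic $3$-form. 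By construction $\{df=0\}\subset V$ is cut out by the same equations as $\{\kappa=0\}$, so it is analytically isomorphic to $U$.

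The main obstacle I expect is the passage from formal to analytic, which enters twice: one has to know that the Kuranishi data $(V,\kappa)$ come from genuinely convergent power series rather than formal ones, and, more subtly, that the cyclic primitive $f$ is a bona fide convergent holomorphic function on $V$ and not merely a formal antiderivative of $\omega$. In the Joyce--Song framework this is handled by analytic gauge theory for Hermitian--Yang--Mills connections together with direct estimates on the Chern--Simons integral; a conceptually cleaner alternative I would consider is to invoke the $(-1)$-shifted symplectic structure on the derived moduli stack produced by Pantev--To\"en--Vaqui\'e--Vezzosi, and then apply the analytic Darboux theorem of Brav--Bussi--Dupont--Joyce--Meinhardt, which outputs exactly such a critical chart $(V,f)$ and bypasses the analytic gauge-fixing estimates.
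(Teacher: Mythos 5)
The paper does not actually prove this statement: it is a survey, and Theorem~\ref{thm:CS} is quoted from Joyce--Song \cite{JS} (their Theorem~5.4) purely as input for describing the Behrend function. So there is no internal proof to compare against; what you have written is an outline of the proof in the cited source. As such an outline it is essentially correct in architecture: the cyclic dg Lie/$L_\infty$ model on $\mathrm{RHom}(E,E)[1]$ with the Serre-duality pairing, the holomorphic Kuranishi slice $\kappa\colon V\to \mathrm{Ext}^2(E,E)$, the identification of $\langle\kappa(\cdot),-\rangle$ with $df$ for the (restricted) holomorphic Chern--Simons potential, and the observation that the whole difficulty is formal-versus-convergent. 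You are also right that the modern route through $(-1)$-shifted symplectic structures and the Darboux theorem (Brav--Bussi--Joyce and its extensions, which postdate both \cite{JS} and this survey) gives the same critical chart more cleanly. One small point worth making explicit: stability, via $\mathrm{Hom}(E,E)=\mathbb{C}$, is what makes the Kuranishi slice an honest local chart of the coarse scheme $M_H^{\rm{s}}(v)$ rather than of a stack, and the isomorphism $U\cong\{df=0\}$ should be checked as complex analytic spaces (equality of ideals), which does follow from the exact identity $df=\langle\kappa,-\rangle$ for the cyclic potential.

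The one genuine gap is the step ``fix a Hermitian metric on the chosen resolution, pass to the harmonic gauge, \dots'' together with the displayed Chern--Simons integral. That functional, with a single field $a\in\Omega^{0,1}(\mathrm{End}\,E)$ and a single trace, only makes literal sense when $E$ is a holomorphic vector bundle; a general $H$-stable sheaf on a projective 3-fold is not locally free, and a locally free resolution only hands you a formal cyclic $L_\infty$ potential, whose convergence on an analytic slice is exactly the thing you cannot assume. Bridging this is where almost all of the work in \cite{JS} lives: they reduce the local study of the moduli of sheaves to a genuine gauge-theoretic moduli problem for bundles (via twisting by $\mathcal{O}(n)$ for $n\gg 0$ and auxiliary moduli of pairs/modules), and then prove by elliptic estimates that the slice and the restricted Chern--Simons function are complex analytic. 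You correctly flag this as ``the main obstacle,'' but as written the proposal asserts the existence of the convergent chart $(V,\kappa)$ and of the convergent primitive $f$ rather than establishing them; either the full gauge-theoretic argument of \cite{JS} or the shifted-symplectic Darboux theorem has to be invoked in earnest, not as an afterthought, since that is the entire content of the theorem.
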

Using the above result, the 
Behrend function $\nu_B$ on $M_H^{\rm{s}}(v)$ is 
described as
\begin{align*}
\nu_B(p)= (-1)^{\dim V}(1-\chi(M_p(f)))
\end{align*}
where $M_p(f)$ is the Milnor fiber of $f$ at $p$. 
The function $\nu_B$ is shown to a be well-defined 
constructible function on $M_H^{\rm{s}}(v)$. 
\begin{theorem}[\cite{Beh}]
If $\mathcal{M}_H^{\rm{s}}(v)=\mathcal{M}_H^{\rm{ss}}(v)$ holds, 
we have the equality
\begin{align}\label{DT=B}
\mathrm{DT}_H(v)=\sum_{k\in \mathbb{Z}}
k \cdot \chi(\nu_B^{-1}(k)). 
\end{align}
In particular if $M_H^{\rm{s}}(v)$ is non-singular and connected, 
the invariant $\mathrm{DT}_H(v)$ coincides with 
$\chi(M_H^{\rm{s}}(v))$ up to sign. 
\end{theorem}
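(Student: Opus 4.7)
The plan is to prove this in two steps, corresponding to the main equality and the smooth corollary. For the main identity (\ref{DT=B}), I would combine the local critical-locus description from Theorem~\ref{thm:CS} with the general machinery comparing virtual degrees of symmetric obstruction theories against weighted Euler characteristics. First, recall that on $M_H^{\rm{s}}(v)$ the perfect obstruction theory $\mathcal{E}^{\bullet}\to L_{M_H^{\rm{s}}(v)}$ constructed by Thomas is \emph{symmetric} in the sense of Behrend--Fantechi; in particular the obstruction sheaf is dual to the tangent sheaf. By Theorem~\ref{thm:CS}, every point of $M_H^{\rm{s}}(v)$ has an analytic neighbourhood isomorphic to the zero locus $\{df=0\}$ of a holomorphic function $f\colon V\to \mathbb{C}$ on a complex manifold, and this local model carries its own natural symmetric obstruction theory coming from the Hessian of $f$. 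A standard compatibility argument (uniqueness of symmetric obstruction theories up to isomorphism, cf.~\cite{BBr}) shows that the restriction of Thomas's obstruction theory agrees with the critical-locus one on such charts.

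The next step is to verify the identity on the local model $U=\{df=0\}\subset V$. Here two facts are used. On the one hand, the Behrend function of $U$ equals
\begin{align*}
\nu_B(p)=(-1)^{\dim V}\bigl(1-\chi(M_p(f))\bigr),
\end{align*}
as stated before the theorem. On the other hand, for a critical locus one can compute the contribution of each stratum to $\deg[U]^{\rm{vir}}$ using microlocal geometry: after perturbing $f$ to a Morse function and applying the ``vanishing cycles vs.\ virtual cycle'' principle, one finds that the contribution of a point $p$ is precisely the local index of $df$, which by Milnor's theorem equals $(-1)^{\dim V}\bigl(1-\chi(M_p(f))\bigr)$. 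Stratifying $U$ by level sets of $\nu_B$ and adding up the contributions (using additivity of the topological Euler characteristic over constructible partitions) gives $\deg[U]^{\rm{vir}}=\sum_{k}k\cdot\chi(\nu_B^{-1}(k))$ on the chart. Finally, both sides of (\ref{DT=B}) behave additively under gluing such analytic charts, so the identity globalises to $M_H^{\rm{s}}(v)$.

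For the ``in particular'' part, assume $M:=M_H^{\rm{s}}(v)$ is non-singular and connected, and let $d=\dim M$. At every point the Milnor fibre is that of a smooth function, which is contractible, so $\chi(M_p(f))=1$, giving $\nu_B\equiv (-1)^{d}$ on $M$. Hence the right-hand side of (\ref{DT=B}) equals $(-1)^{d}\chi(M)$. On the other hand, symmetry of the obstruction theory together with smoothness forces the obstruction bundle to be $\Omega_M$, so $[M]^{\rm{vir}}=c_d(\Omega_M)\cap[M]$, whose degree equals $(-1)^{d}\chi(M)$ by the Gauss--Bonnet/Hirzebruch formula. The two answers agree, which finishes the corollary.

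The main obstacle in this programme is the local-to-global comparison in the second step: the microlocal identification of $\deg[U]^{\rm{vir}}$ with the signed Milnor number, and then the passage from analytic charts to the global virtual degree. The former requires the Parusi\'nski--Pragacz/Behrend computation of local Euler obstructions of critical loci, while the latter needs a careful check that the symmetric obstruction theory is Zariski-locally, and in fact analytically, pulled back from the critical-locus model. Everything else — additivity of $\chi$ over stratifications, constructibility of $\nu_B$, and the Gauss--Bonnet step — is formal.
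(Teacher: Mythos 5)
The paper itself gives no proof of this statement; it is quoted from Behrend's paper \cite{Beh}, so your proposal has to stand on its own. The ``in particular'' part is fine: on a non-singular $M=M_H^{\rm{s}}(v)$ one has $\nu_B\equiv(-1)^{\dim M}$, and symmetry of the obstruction theory gives $[M]^{\rm{vir}}=c_{\dim M}(\Omega_M)\cap[M]$, of degree $(-1)^{\dim M}\chi(M)$; that is the standard excess-intersection computation. The problem is the core of your argument for the identity (\ref{DT=B}). You propose to verify $\deg[U]^{\rm{vir}}=\sum_k k\cdot\chi(\nu_B^{-1}(k))$ on each analytic chart $U=\{df=0\}$ and then conclude because ``both sides behave additively under gluing such analytic charts.'' The right-hand side is additive over a constructible partition, but the left-hand side is not: $\deg[M_H^{\rm{s}}(v)]^{\rm{vir}}$ is the degree of a global zero-cycle built from the intrinsic normal cone, and it is not a sum of contributions of analytic open subsets. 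There is no localization mechanism available here: the critical loci in the charts are positive-dimensional and non-proper, ``the contribution of a point $p$ to $\deg[U]^{\rm{vir}}$'' is not a well-defined quantity, and perturbing $f$ to a Morse function changes the critical locus and is not available for the global moduli space. This local-to-global passage is precisely the nontrivial content of Behrend's theorem, and your sketch assumes it rather than proves it.

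Behrend's actual argument is structured differently and never uses critical charts (Theorem~\ref{thm:CS} was only proved later, by Joyce--Song \cite{JS}). He attaches to the intrinsic normal cone of the moduli space a conic Lagrangian cycle in the cotangent bundle, defines $\nu_B$ globally as MacPherson's local Euler obstruction of that cycle, and shows that for a symmetric obstruction theory the virtual class agrees with the degree-zero part of the Chern--Schwartz--MacPherson class of $\nu_B$; the equality $\deg c_{SM}(\nu_B)=\sum_k k\cdot\chi(\nu_B^{-1}(k))$ is then MacPherson's index theorem (proper push-forward to a point), which is where the global statement comes from. The Milnor-fibre formula $\nu_B(p)=(-1)^{\dim V}(1-\chi(M_p(f)))$ is a local computation of this intrinsically defined function in a critical chart --- an output of the theory, not its engine. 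Your incidental appeal to ``uniqueness of symmetric obstruction theories up to isomorphism'' is also false as stated; what is true, and is again a consequence of Behrend's theorem, is that the virtual degree depends only on the scheme structure of the moduli space. To salvage your route you would have to replace the gluing step by the local-Euler-obstruction/CSM machinery, at which point you have reconstructed Behrend's proof.
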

Based on the above description of the DT invariant, 
Joyce-Song~\cite{JS}
and Kontsevich-Soibelman~\cite{K-S}
constructed the generalized DT invariant
$\mathrm{DT}_H(v) \in \mathbb{Q}$
without the condition $\mathcal{M}_H^{\rm{s}}(v)=\mathcal{M}_H^{\rm{ss}}(v)$. 
The construction uses the stack theoretic 
Hall algebra $H(\mathrm{Coh}(X))$ of $\mathrm{Coh}(X)$, 
and its well-definedness is highly non-trivial.
A very rough description of it may be
\begin{align*}
\mathrm{DT}_H(v) = \int_{\log \mathcal{M}_H^{\rm{ss}}(v)} 
\nu_B \cdot d\chi. 
\end{align*} 
The `log' is taken in the 
algebra $H(\mathrm{Coh}(X))$. 
Some more explanation of a specific case 
is available in~\cite{Tsurvey}.

\begin{remark}
We can define another invariant 
$\mathrm{DT}_H^{\chi}(v) \in \mathbb{Q}$
by formally putting $\nu_{B} \equiv 1$ in the 
definition of $\mathrm{DT}_H(v)$. 
If $\mathcal{M}_H^{\rm{s}}(v)=\mathcal{M}_H^{\rm{ss}}(v)$, 
it coincides with the usual 
Euler characteristic $\chi(M_H^{\rm{s}}(v))$.
When we say a result as a
\textit{weighted (resp. an unweighted) version}, 
it means the result for the invariants 
$\mathrm{DT}_H(v)$ (resp. $\mathrm{DT}_H^{\chi}(v)$). 
\end{remark}

\subsection{Rank one DT invariants}
In what follows, we identify 
$H^4(X, \mathbb{Q})$, $H^6(X, \mathbb{Q})$ with $H_2(X, \mathbb{Q})$, 
$\mathbb{Q}$ respectively 
by the Poincar\'e duality. 
Given $\beta \in H_2(X, \mathbb{Z})$ and $n\in \mathbb{Z}$, 
it is easy to show that
$\mathrm{Hilb}_{n}(X, \beta)$
is isomorphic to $M_H^{\rm{s}}(v)$
for $v=(1, 0, -\beta, -n)$
by the assignment $C \mapsto I_C$. 
The resulting invariant
\begin{align*}
I_{n, \beta}=\mathrm{DT}_H(1, 0, -\beta, -n) \in \mathbb{Z}
\end{align*}
is independent of $H$, and it counts
one or zero dimensional subschemes $C \subset X$
with $[C]=\beta$, $\chi(\mathcal{O}_C)=n$. 
For $\beta \in H_2(X, \mathbb{Z})$, 
the series $I_{\beta}(X)$ is defined to be
\begin{align*}
I_{\beta}(X)=\sum_{n \in \mathbb{Z}}
I_{n, \beta}q^n. 
\end{align*}
\begin{exam}\label{exii}
(i) 
If $\beta=0$, we have~\cite{Li}, \cite{BBr}, \cite{LP}
\begin{align*}
I_0(X)=
\prod_{k \ge 1} (1-(-q)^k)^{-k \chi(X)}. 
\end{align*}

(ii) If $f \colon X \to Y$ is a birational contraction 
whose exceptional locus is $C\cong \mathbb{P}^1$
with normal bundle $\mathcal{O}_C(-1)^{\oplus 2}$, we have~\cite{BeBryan}
\begin{align*}
\sum_{m\ge 0} I_{m[C]}(X)t^m=
\prod_{k \ge 1} (1-(-q)^k)^{-k \chi(X)}
\prod_{k\ge 1}(1-(-q)^k t)^k. 
\end{align*}
\end{exam} 
The above example indicates that
the quotient series $I_{\beta}(X)/I_{0}(X)$
is the honest curve counting series with homology class
$\beta$. The following conjecture was proposed by 
MNOP~\cite{MNOP}:
\begin{conjecture}[\cite{MNOP}]\label{MNOP}
(i) The quotient series 
$I_{\beta}(X)/I_0(X)$ is the Laurent expansion of a 
rational function of $q$, invariant under $q\leftrightarrow 1/q$. 

(ii) 
After the variable change $q=-e^{i\lambda}$, we have 
the equality
\begin{align*}
\sum_{\beta \ge 0} \frac{I_{\beta}(X)}{I_0(X)}t^{\beta}
= \exp\left( \sum_{g\ge 0, \beta>0}
\mathrm{GW}_{g, \beta}(X)\lambda^{2g-2} t^{\beta}
\right). 
\end{align*}
\end{conjecture}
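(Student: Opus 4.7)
The plan is to address part (i) by the derived-categorical wall-crossing strategy outlined in Subsection~\ref{subsec:new}, and to note that part (ii), the Gromov--Witten/Donaldson--Thomas correspondence, lies outside the scope of these techniques and seems to require independent input such as degeneration formulas and direct equivariant computations on toric models.

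For part (i), the first step is to restrict attention to a triangulated subcategory $\mathcal{A}_X \subset D^b\mathrm{Coh}(X)$ generated by $\mathcal{O}_X$ together with coherent sheaves of support dimension at most one --- the category of D0-D2-D6 bound states. Every ideal sheaf $I_C$ appearing in $\mathrm{Hilb}_n(X,\beta)$ lies in $\mathcal{A}_X$ via the exact triangle $I_C \to \mathcal{O}_X \to \mathcal{O}_C$, and the Chern character of $I_C$ equals $(1,0,-\beta,-n)$. The second step is to construct a one-real-parameter family of weak stability conditions $\{\sigma_t\}_{t\in\mathbb{R}}$ on $\mathcal{A}_X$ (in the coarse sense of~\cite{Tolim2}, more tractable than honest Bridgeland stability conditions), so that for $t$ in one extreme chamber --- the ``DT chamber'' near the large volume limit --- the $\sigma_t$-semistable objects of class $(1,0,-\beta,-n)$ are precisely the ideal sheaves $I_C$, and hence the associated DT-type invariant $\mathrm{DT}_{\sigma_t}(1,0,-\beta,-n)$ recovers $I_{n,\beta}$.

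The third step is to apply the Joyce--Song/Kontsevich--Soibelman wall-crossing formula in the motivic Hall algebra of $\mathcal{A}_X$ to compare the generating series in the DT chamber with that in a reference chamber, e.g.\ the chamber in which $\mathcal{O}_X$ becomes unstable and the semistable objects admit a manifestly simpler description in terms of pairs. For fixed $\beta$ only finitely many walls intervene, and at each wall the contribution involves convolution with the series $I_0(X)$. Dividing by $I_0(X)$ cleans up these contributions, and one reads off that $I_\beta(X)/I_0(X)$ equals a finite product of rational functions in $q$, proving rationality. The fourth step, for the $q \leftrightarrow 1/q$ invariance, is to invoke the derived dual autoequivalence $\mathbf{D}(-) = \mathbf{R}\mathcal{H}om(-,\mathcal{O}_X)[1]$ on $D^b\mathrm{Coh}(X)$: one checks that $\mathbf{D}$ preserves $\mathcal{A}_X$, acts on Chern characters as $(1,0,-\beta,-n) \mapsto (1,0,-\beta,n)$ (which on generating series is $q \leftrightarrow 1/q$), and induces an involution on the space of weak stability conditions that exchanges the two chambers used in the wall-crossing argument. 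Comparing the wall-crossing formula with its $\mathbf{D}$-transform yields the desired symmetry.

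The main obstacle is twofold. First, the wall-crossing machinery of~\cite{JS},~\cite{K-S} naturally outputs Behrend-weighted invariants, whereas Theorem~\ref{intro:thm} is the unweighted statement; one must either track the Behrend function through every wall using the integral identity of Joyce--Song for triangles in $\mathcal{A}_X$, or else work with an Euler-characteristic version of the Hall algebra where the identity becomes trivial but a separate argument is needed to relate the output to $\chi(\mathrm{Hilb}_n(X,\beta))$. Second, verifying the finiteness of walls and the compatibility of $\mathbf{D}$ with the chosen family $\{\sigma_t\}$ requires a delicate analysis of the central charges and of the $\mathbf{D}$-image of $\mathcal{O}_X[k]$, which is the technical heart of the argument. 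Part (ii) would then need to be attacked separately, using the rationality from part (i) as a starting point but bringing in Gromov--Witten-side information such as the topological vertex or the degeneration formula.
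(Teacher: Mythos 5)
Your outline reproduces essentially the strategy the paper attributes to \cite{Tolim2}, \cite{Tcurve1}, \cite{BrH} for part (i) --- weak stability conditions on the D0-D2-D6 category $\mathcal{D}_X=\langle \mathcal{O}_X, \mathrm{Coh}_{\le 1}(X)\rangle_{\rm tr}$, a one-parameter family crossed by finitely many walls, the Joyce--Song/Kontsevich--Soibelman wall-crossing formula, the derived dual supplying the $q\leftrightarrow 1/q$ symmetry, and the weighted-versus-unweighted caveat --- and it correctly defers part (ii) to the degeneration and localization methods of \cite{PP}. The only imprecision is that the division by $I_{0}(X)$ enters through the separate DT/PT wall-crossing of Theorem~\ref{thm:DTPT} (where zero-dimensional sheaves destabilize), while the walls in the one-parameter family $\sigma_{\theta}$ contribute the one-dimensional counting invariants $N_{n,\beta}$ of Theorem~\ref{thm:main}, rather than $I_{0}(X)$ appearing at every wall.
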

Here $\mathrm{GW}_{g, \beta}(X)\in \mathbb{Q}$ is the Gromov-Witten invariant
counting stable maps $f \colon C \to X$
from projective curves $C$ with at worst nodal singularities
with $g(C)=g$, $f_{\ast}[C]=\beta$. 
The variable change $q=-e^{i\lambda}$ makes sense by 
the rationality conjecture (i). 
The above conjecture was first proved for 
toric Calabi-Yau 3-folds in~\cite{MNOP}. 

\subsection{Developments on MNOP conjecture}
As we mentioned in the introduction, 
the result of Theorem~\ref{intro:thm}
is the unweighted version of Conjecture~\ref{MNOP} (i). 
The weighted version was  
proved in~\cite{BrH}. 
We have the following result
~\cite{Tolim2}, \cite{Tcurve1} (unweighted
version), \cite{BrH} (weighted version):
\begin{theorem}\label{thm:main}
There exist invariants $N_{n, \beta} \in \mathbb{Q}$, 
$L_{n, \beta} \in \mathbb{Q}$
satisfying 
\begin{itemize}
\item 
$N_{n, \beta}=N_{-n, \beta}=N_{n+H \beta, \beta}$
for any ample divisor $H$ on $X$,
\item 
$L_{n, \beta}=L_{-n, \beta}$, and it is 
zero for $\lvert n \rvert \gg 0$,
\end{itemize}
such that we have the following formula:
\begin{align*}
\sum_{\beta \ge 0}I_{\beta}(X)t^{\beta}=
\prod_{n>0, \beta \ge 0}
\exp((-1)^{n-1}nN_{n, \beta}q^n t^{\beta})
\left( \sum_{n, \beta}L_{n, \beta} q^n t^{\beta} \right). 
\end{align*}
\end{theorem}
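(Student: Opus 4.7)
The plan is to realize $\sum_{\beta\geq 0} I_\beta(X)t^\beta$ as a generating series of counting invariants for a certain weak stability condition on the subcategory $\mathcal{A}_X\subset D^b\mathrm{Coh}(X)$ of D0-D2-D6 bound states, and then use the Joyce-Song/Kontsevich-Soibelman wall-crossing formula in the motivic Hall algebra to relate it to the invariants in a much simpler chamber, thereby extracting the infinite product on the right hand side.

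First, I would introduce the heart $\mathcal{A}_X$ as the smallest extension-closed subcategory of $D^b\mathrm{Coh}(X)$ containing $\mathcal{O}_X$ and shifts $F[-1]$ of coherent sheaves $F$ supported in dimension $\leq 1$, and construct a one-parameter family of weak stability conditions $\sigma_t$ on $\mathcal{A}_X$ with central charge coupling rank, $\beta$ and $n$ through a real parameter $t$. For $t\gg 0$, a direct argument identifies the $\sigma_t$-semistable objects of numerical class $(1,0,-\beta,-n)$ with ideal sheaves $I_C\in \mathrm{Hilb}_n(X,\beta)$, so the large-volume chamber produces $\sum_\beta I_\beta(X)t^\beta$. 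In the chamber opposite to the large volume limit, the $\sigma_t$-semistable rank-one objects admit a bounded description; I would define $L_{n,\beta}$ to be the Joyce-Song-type generalized DT invariant of that chamber, which automatically gives $L_{n,\beta}=0$ for $|n|\gg 0$.

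Next I would analyze the wall-and-chamber structure and apply the wall-crossing formula. Each wall corresponds to a rank-one $\sigma_t$-semistable object degenerating via a sub- or quotient pure one-dimensional sheaf $F$ with $[F]=\beta'$, $\chi(F)=n'$, and the jump of invariants is governed by the generalized DT invariants $N_{n',\beta'}$ of such sheaves, which are already known to satisfy $N_{n,\beta}=N_{n+H\beta,\beta}$ by tensoring with $\mathcal{O}_X(H)$. Telescoping the wall-crossing identities in the motivic Hall algebra and taking the logarithm in a "no pole" Lie subalgebra converts the cumulative wall contributions into an infinite product, and a direct computation with the semiclassical Poisson bracket yields exactly the factor $\prod_{n>0,\beta\geq 0}\exp((-1)^{n-1}nN_{n,\beta}q^n t^\beta)$ relating the $t\gg 0$ chamber to the opposite chamber.

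Finally, the remaining symmetries would be obtained from derived duality: the autoequivalence $E\mapsto \mathbf{R}\mathcal{H}om(E,\mathcal{O}_X)[1]$ sends $\mathcal{A}_X$ to a category which, after an appropriate shift, is identified with $\mathcal{A}_X$ itself, and on numerical classes it acts as $n\mapsto -n$ on one-dimensional sheaves and flips the large volume chamber to the opposite chamber. This yields $N_{n,\beta}=N_{-n,\beta}$ as well as $L_{n,\beta}=L_{-n,\beta}$. The main obstacle is the analytic and combinatorial work of verifying that the weak stability conditions $\sigma_t$ satisfy the Harder-Narasimhan property, boundedness of semistable objects and local finiteness of walls needed to invoke the Joyce-Song/Kontsevich-Soibelman machinery, together with establishing the integrality property of the Behrend-function-weighted Hall algebra homomorphism that is required for the clean exponential product to emerge from the wall-crossing.
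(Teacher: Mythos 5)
Your proposal follows essentially the same route as the paper: counting invariants for a one-parameter family of weak stability conditions on the category of D0-D2-D6 bound states $\mathcal{A}_X=\langle \mathcal{O}_X, \mathrm{Coh}_{\le 1}(X)[-1]\rangle$, Joyce--Song/Kontsevich--Soibelman wall-crossing with walls indexed by one-dimensional sheaves contributing the $N_{n,\beta}$, the self-dual chamber defining $L_{n,\beta}$, and the derived dual $\mathbf{R}\mathcal{H}om(-,\mathcal{O}_X)$ giving the $n\mapsto -n$ symmetries. The only cosmetic difference is that the paper's argument passes explicitly through the stable pair invariants $P_{n,\beta}$ as an intermediate chamber (combining the DT/PT wall-crossing with the crossing from $\theta\to 1$ down to the self-dual point $\theta=1/2$), whereas you telescope directly from the ideal-sheaf chamber; you also correctly identify the genuine technical obstacles (Harder--Narasimhan property, boundedness, local finiteness of walls, and the Behrend function identities).
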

\begin{remark}
The proofs for the unweighted version
in the author's papers \cite{Tcurve1}, \cite{Tolim2}
can be modified to show the weighted version, 
if once a similar result of Theorem~\ref{thm:CS}
for the moduli spaces of complexes in~\cite{Inaba}, \cite{LIE}
is shown to be true (cf.~\cite{Tsurvey}). 
This is also applied for the results below. 
\end{remark}
The rationality conjecture is an easy consequence of Theorem~\ref{thm:main}:
\begin{corollary}
Conjecture~\ref{MNOP} (i) is true. 
\end{corollary}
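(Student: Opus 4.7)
The plan is to extract the coefficient of $t^\beta$ from the identity of Theorem~\ref{thm:main} after dividing by $I_0(X)$, and to verify that each resulting summand is a rational function of $q$ invariant under $q\leftrightarrow 1/q$. Setting $t=0$ in the identity and writing $L_\gamma(q):=\sum_n L_{n,\gamma}q^n$, the $t^0$-coefficient of the right-hand side reads
\[
I_0(X) = \exp\!\Bigl(\sum_{n>0}(-1)^{n-1} n N_{n,0}\, q^n\Bigr)\cdot L_0(q),
\]
so dividing the identity by $I_0(X)$ gives
\[
\sum_{\beta\ge 0}\frac{I_\beta(X)}{I_0(X)}\, t^\beta = \exp\!\Bigl(\sum_{n>0,\,\beta>0}(-1)^{n-1} n N_{n,\beta}\, q^n t^\beta\Bigr)\cdot \frac{L(q,t)}{L_0(q)}.
\]
For fixed $\beta>0$, its $t^\beta$-coefficient is a finite $\mathbb{Q}$-polynomial combination of the two types of building blocks
$F_\gamma(q):=\sum_{n>0}(-1)^{n-1} n N_{n,\gamma}\, q^n$ for $0<\gamma\le\beta$, and $L_{\beta'}(q)/L_0(q)$ for $0\le\beta'\le\beta$.

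The $L$-blocks are manifestly rational and $q\leftrightarrow 1/q$-invariant: each $L_\gamma(q)$ is a Laurent polynomial since $L_{n,\gamma}=0$ for $|n|\gg 0$, and the symmetry $L_{n,\gamma}=L_{-n,\gamma}$ yields $L_\gamma(q)=L_\gamma(1/q)$, which descends to the quotient. For the $F$-blocks, rationality follows from the periodicity $N_{n+H\gamma,\gamma}=N_{n,\gamma}$: the coefficient $(-1)^{n-1} n N_{n,\gamma}$ is a quasi-polynomial of degree one in $n$, so splitting the sum by residues modulo $H\gamma$ and applying the closed form $\sum_{m\ge 0}(mp+r)u^{mp+r}=pu^{p+r}/(1-u^p)^2+ru^r/(1-u^p)$ (with $p=H\gamma$) expresses $F_\gamma(q)$ as the Laurent expansion of a rational function.

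The heart of the proof is the $q\leftrightarrow 1/q$-invariance of $F_\gamma(q)$. Substituting $u=-q$, under which $q\leftrightarrow 1/q$ becomes $u\leftrightarrow 1/u$, reduces the claim to showing $T(u)=T(1/u)$ for $T(u):=\sum_{n>0} n N_{n,\gamma}\, u^n$. The combined symmetries $N_{-n,\gamma}=N_{n,\gamma}$ and $N_{n+H\gamma,\gamma}=N_{n,\gamma}$ make $N_{n,\gamma}$ an even $H\gamma$-periodic function of $n$, so it decomposes as a $\mathbb{Q}$-linear combination of the paired indicators $\mathbf{1}_{n\equiv\pm r\,(\mathrm{mod}\,H\gamma)}$ (together with the singletons $r=0$ and, if $H\gamma$ is even, $r=H\gamma/2$). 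By linearity it suffices to check $u\leftrightarrow 1/u$-invariance for each pair-sum
\[
P_r(u):=\sum_{\substack{n>0\\ n\equiv\pm r\,(\mathrm{mod}\,H\gamma)}} n u^n,
\]
and this is a direct calculation with the closed form above: after placing $P_r(u)$ and $P_r(1/u)$ over the common denominator $(1-u^p)^2$, their numerators coincide. Assembling the pieces, the $t^\beta$-coefficient $I_\beta(X)/I_0(X)$ is a polynomial in rational, $q\leftrightarrow 1/q$-invariant functions and hence is itself such, proving Conjecture~\ref{MNOP}(i). The main obstacle lies in this last invariance step: the individual residue sums $\sum_{n\equiv r} n u^n$ are \emph{not} $u\leftrightarrow 1/u$-invariant, and the required symmetry only emerges after pairing $r$ with its reflection $-r$, which is precisely where the evenness condition $N_{-n,\gamma}=N_{n,\gamma}$ enters.
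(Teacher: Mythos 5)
Your proposal is correct and follows exactly the route the paper intends: the paper simply asserts that the corollary is ``an easy consequence of Theorem~\ref{thm:main},'' and your argument is the standard deduction from that product formula, using the periodicity and evenness of $N_{n,\beta}$ and the symmetry and finite support of $L_{n,\beta}$. The details you supply (in particular the observation that $q\leftrightarrow 1/q$-invariance of the $N$-blocks only emerges after pairing residues $r$ and $-r$ modulo $H\cdot\beta$) are accurate and are precisely what the paper leaves implicit.
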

There exist
geometric meanings of $N_{n, \beta}$ and $L_{n, \beta}$. 
The former invariant 
is nothing but the generalized DT invariant 
$\mathrm{DT}_{H}(0, 0, \beta, n)$, 
which counts one or zero dimensional 
$H$-semistable sheaves $F$ on $X$
with $[F]=\beta$, $\chi(F)=n$. 
A priori, $N_{n, \beta}$
 is defined using the ample divisor $H$, 
but the resulting invariant 
is shown to be
independent of $H$. 
The latter invariant $L_{n, \beta}$ is 
more interesting. It counts 
certain two term complexes $E \in  D^b \mathrm{Coh}(X)$
(indeed they are perverse coherent sheaves in the 
sense of~\cite{Kashi}, \cite{Bez})
satisfying $\mathrm{ch}(E)=(1, 0, -\beta, -n)$, 
which are semistable with respect to 
a derived self dual weak stability condition on it. 
The result of Theorem~\ref{thm:main} is proved 
along with the idea stated in Subsection~\ref{subsec:new}. 

A similar idea also proves Pandharipande-Thomas
conjecture~\cite{PT}
relating the quotient series of 
rank one DT invariants with the invariants 
counting stable pairs. 
The definition of stable pairs is given as follows: 
\begin{definition}[\cite{PT}]
A \textit{stable pair} is data $(F, s)$ where $F$ is a pure 
one dimensional sheaf on $X$, $s \colon \mathcal{O}_X \to F$
is a morphism which is surjective in dimension one. 
\end{definition}
A typical example of a stable pair is $(\mathcal{O}_C(D), s)$, 
where $C \subset X$ is a smooth curve, $D \subset C$
is an effective divisor and $s$ is a natural 
composition $\mathcal{O}_X \twoheadrightarrow \mathcal{O}_C \subset
\mathcal{O}_C(D)$. 
For given $\beta \in H_2(X, \mathbb{Z})$ and $n \in \mathbb{Z}$, 
the moduli space $P_n(X, \beta)$
of stable pairs $(F, s)$ with $[F]=\beta$, $\chi(F)=n$
is a projective scheme with a symmetric perfect obstruction 
theory~\cite{PT}. 
The PT invariant $P_{n, \beta} \in \mathbb{Z}$
is defined to be the degree of the 
zero dimensional virtual fundamental cycle $[P_n(X, \beta)]^{\rm{vir}}$
on $P_n(X, \beta)$. 
The invariant $P_{n, \beta}$
is deformation invariant, and coincides with the 
weighted Euler characteristic with respect to the 
Behrend function on $P_n(X, \beta)$. 
The following conjecture was proposed by~\cite{PT}, 
its unweighted version was proved in~\cite{Tcurve1}, \cite{StTh}, 
and the weighted version was proved in~\cite{BrH}:
\begin{theorem}\label{thm:DTPT}
For fixed $\beta\in H_2(X, \mathbb{Z})$, 
we have the equality of the generating series
\begin{align*}
\frac{I_{\beta}(X)}{I_0(X)}
=\sum_{n\in \mathbb{Z}} P_{n, \beta}q^n.
\end{align*}
\end{theorem}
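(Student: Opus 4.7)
The plan is to realize both the DT and PT generating series as counting invariants of semistable objects in a common extension-closed subcategory of $D^b\mathrm{Coh}(X)$, and then relate them by wall-crossing. This is the \emph{D0-D2-D6 bound state} strategy described in Subsection~\ref{subsec:new} that was used to establish Theorem~\ref{intro:thm} and Theorem~\ref{thm:main}.

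First I would work inside the heart $\mathcal{A}_X \subset D^b\mathrm{Coh}(X)$ generated (by extensions) by $\mathcal{O}_X[1]$, $\mathcal{O}_X[2]$ and the coherent sheaves supported in dimension $\leq 1$. For a curve $C \subset X$ with $[C] = \beta$ and $\chi(\mathcal{O}_C)=n$, the ideal sheaf $I_C$ (appropriately shifted) is an object of $\mathcal{A}_X$ of class $(1,0,-\beta,-n)$; for a stable pair $(F,s)$ with the same discrete invariants, the two-term complex $I^{\bullet} = [\mathcal{O}_X \to F]$ likewise defines an object of $\mathcal{A}_X$ of this class. Moreover $I_C$ and $I^{\bullet}$ are related by distinguished triangles whose third term is a sheaf of dimension $\leq 1$; in particular when $F$ is pure and $s$ has image $\mathcal{O}_{C_F}$, the cone is a zero-dimensional cokernel. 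Thus rank-one objects in $\mathcal{A}_X$ are naturally parametrized by DT-type and PT-type data up to extensions by coherent sheaves of dimension $\leq 1$.

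Second, I would construct a one-parameter family of weak stability conditions $\{\sigma_t\}$ on $\mathcal{A}_X$ (in the coarse sense introduced for Theorem~\ref{intro:thm}) whose central charge interpolates between two chambers. In the \textbf{DT chamber} (one limit of $t$) the $\sigma_t$-semistable objects of class $(1,0,-\beta,-n)$ are exactly ideal sheaves $I_C$; in the \textbf{PT chamber} (the opposite limit) they are exactly the stable-pair complexes $[\mathcal{O}_X\to F]$. Between the two chambers one finds a locally finite collection of walls, each governed by destabilizing short exact sequences whose ``jumping'' factor is a one-dimensional $H$-semistable sheaf counted by the invariants $N_{n,\beta}$ from Theorem~\ref{thm:main}. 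This step requires verifying boundedness and finite-type of the moduli stacks of $\sigma_t$-semistable objects, and local finiteness of walls in the space of weak stability conditions.

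Third, I would apply the Joyce--Song / Kontsevich--Soibelman wall-crossing formalism in the motivic Hall algebra of $\mathcal{A}_X$, integrated via the Behrend-weighted characteristic, to obtain a product formula relating the DT and PT generating functions across every wall. Assembling the contributions, the wall-crossing factor is precisely the product built from the $N_{n,\beta}$, which by the $\beta = 0$ specialization of Theorem~\ref{thm:main} agrees with $I_0(X)$ (the PT moduli is empty for $\beta = 0$ and $n>0$, so the $\beta=0$ case pins down the universal product). Dividing yields the desired identity $I_{\beta}(X)/I_0(X) = \sum_n P_{n,\beta}\,q^n$. The hardest step is the weighted version: one must lift the unweighted wall-crossing to the Behrend-function setting, and this hinges on a Joyce--Song type critical-chart description (Theorem~\ref{thm:CS}) for the moduli stacks of \emph{complexes} rather than sheaves, as flagged in the remark following Theorem~\ref{thm:main}, together with the compatibility of Behrend functions under the Hall algebra integration map proved by Bridgeland in~\cite{BrH}.
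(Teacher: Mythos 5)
Your proposal follows essentially the same route the paper sketches (and attributes to \cite{Tcurve1}, \cite{StTh} for the unweighted and \cite{BrH} for the weighted version): realize both series as counting invariants of rank-one objects in the D0-D2-D6 category $\mathcal{D}_X=\langle \mathcal{O}_X, \mathrm{Coh}_{\le 1}(X)\rangle_{\rm tr}$ with heart $\mathcal{A}_X=\langle \mathcal{O}_X, \mathrm{Coh}_{\le 1}(X)[-1]\rangle$, interpolate between a DT-type and a PT-type chamber by a family of weak stability conditions, and integrate the resulting Hall-algebra identity against the Behrend function. One point to tighten in your third step: the walls separating the DT and PT chambers are governed by \emph{zero-dimensional} destabilizing sheaves (consistent with your own observation that the cone comparing $I_C$ with the associated stable-pair complex is zero-dimensional), and this is precisely why the wall-crossing factor is $I_0(X)$; the one-dimensional semistable sheaves counted by $N_{n,\beta}$ with $\beta>0$ govern the \emph{other} wall-crossing, from $P_{n,\beta}$ to $L_{n,\beta}$, which is what produces Theorem~\ref{thm:main} and the rationality statement. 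As written you assert the walls are controlled by the $N_{n,\beta}$ and then recover $I_0(X)$ by a $\beta=0$ consistency check, whereas the argument should identify the destabilizing objects as points directly. Your flagging of the weighted subtlety (a critical-chart description for moduli of complexes and Bridgeland's treatment of Behrend functions under the integration map) matches the paper's own remark.
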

Finally in~\cite{PP}, Pandharipande-Pixton
 proved Conjecture~\ref{MNOP} (ii) 
for large class of Calabi-Yau 3-folds including 
quintic hypersurfaces in $\mathbb{P}^4$:
\begin{theorem}[\cite{PP}]\label{thm:PP}
Conjecture~\ref{MNOP} (ii) is true 
if $X$ is a complete intersection Calabi-Yau 3-fold 
in the product of projective spaces. 
\end{theorem}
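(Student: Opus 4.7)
The strategy is to first reduce the statement to the GW/PT correspondence using Theorem \ref{thm:DTPT}, then degenerate $X$ into simpler pieces and bootstrap from the known toric GW/DT correspondence of \cite{MNOP}. Substituting the identity of Theorem \ref{thm:DTPT} into Conjecture \ref{MNOP} (ii), the desired equality becomes
\begin{align*}
\sum_{\beta\ge 0}\sum_{n\in\mathbb{Z}} P_{n,\beta}q^n t^{\beta}
= \exp\left(\sum_{g\ge 0,\,\beta>0} \mathrm{GW}_{g,\beta}(X)\lambda^{2g-2}t^{\beta}\right)
\end{align*}
after $q=-e^{i\lambda}$. The PT generating series on the left is a rational function of $q$ by the weighted analogue of Theorem \ref{thm:main} (the PT counterpart of Conjecture \ref{MNOP} (i)), so the variable change is legitimate, and the problem reduces to proving the GW/PT correspondence on each complete intersection Calabi-Yau 3-fold $X$ in a product of projective spaces.

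Next, I would view $X$ as the general fibre of a one-parameter family $\mathcal{X}\to\mathbb{A}^1$ whose central fibre $X_0=Y_1\cup_D Y_2$ breaks along a smooth divisor $D$ into two components of lower geometric complexity; a natural choice is to degenerate one of the defining equations of $X$ into a product of two lower-degree equations in a chosen projective factor. Both theories admit degeneration formulas (Li--Ruan on the GW side, Maulik--Pandharipande--Thomas on the PT side) expressing the absolute invariants of $X$ as sums over splittings of \emph{relative} invariants of $(Y_i, D)$, matched along cohomology classes on $D$. Deformation invariance of both sides then reduces the absolute correspondence on $X$ to a compatible relative GW/PT correspondence on the two pieces. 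Iterating this construction and combining it with torus localisation, one ultimately lands in situations where the pieces are local toric Calabi-Yau 3-folds, for which the absolute GW/DT correspondence is already known by \cite{MNOP}.

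The main obstacle is the relative correspondence: the input from \cite{MNOP} is an \emph{absolute} equivariant statement on toric threefolds, whereas what the degeneration demands is a relative-to-absolute upgrade that tracks insertions along the gluing divisors and is functorial with respect to gluing. This is where a capped vertex and descendent formalism is needed: the relative GW/PT correspondence on each $(Y_i,D)$ should be determined by a finite amount of one- and two-leg equivariant vertex data extracted from the normal bundle to $D$, then interpolated non-equivariantly and matched across the degeneration formulas. The rationality of the PT generating series plays a rigidifying role here, restricting the otherwise infinite-dimensional space of candidate generating functions on the two sides to a finite-dimensional family that is pinned down by the toric and relative computations. Once this relative correspondence is secured, the degeneration formulas assemble to yield the full GW/PT, hence GW/DT, correspondence on $X$.
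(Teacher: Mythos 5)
Your proposal follows essentially the same route the paper describes for Theorem~\ref{thm:PP}: reduce Conjecture~\ref{MNOP}~(ii) to the GW/stable-pairs correspondence via Theorem~\ref{thm:DTPT}, then establish that correspondence by the degeneration formula and torus localization, with the relative/capped-vertex descendent formalism supplying the needed relative upgrade of the toric input from \cite{MNOP}. This is precisely the Pandharipande--Pixton strategy that the paper cites and summarizes, so your sketch is consistent with the intended argument.
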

Indeed what they proved is the correspondence
between Gromov-Witten invariants and 
stable pair invariants. 
Combined with Theorem~\ref{thm:DTPT}, 
the result of Theorem~\ref{thm:PP} was proved. 
Their proof relies on the degeneration formula of 
GW and PT invariants, and the torus localization formula. 

\subsection{Non-commutative DT theory and flops}\label{subsec:ncDT}
The DT theory can be also constructed for non-commutative 
varieties or algebras.
Let $Y$ be a quasi-projective 3-fold which admits
two crepant small resolutions giving a \textit{flop}:
\begin{align}\label{small}
\phi \colon X \stackrel{f}{\to}Y \stackrel{f^{\dag}}{\leftarrow}X^{\dag}.
\end{align}
In this situation, Van den Bergh~\cite{MVB}
constructed sheaf of non-commutative algebras 
$A_Y$ on $Y$ and derived equivalences
\begin{align}\label{genMC}
D^b \mathrm{Coh}(X^{\dag}) \stackrel{\Psi}{\to}
D^b \mathrm{Coh}(A_Y) \stackrel{\Phi}{\to}
D^b \mathrm{Coh}(X)
\end{align}
so that their composition gives Bridgeland's
flop equivalence~\cite{Brs1}. For $n\in \mathbb{Z}$
and $\beta \in H_2(X, \mathbb{Z})$, 
let $\mathrm{Hilb}_n(A_Y, \beta)$ be the moduli space of 
surjections $A_Y \twoheadrightarrow F$
in $\mathrm{Coh}(A_Y)$ 
such that $F$ has at most one dimensional support
and $[\Phi(F)]=\beta$, $\chi(\Phi(F))=n$. 
If $X$ is a smooth projective Calabi-Yau 3-fold, 
there is a symmetric perfect obstruction theory 
on $\mathrm{Hilb}_n(A_Y, \beta)$, 
and the degree of its zero dimensional 
virtual fundamental cycle 
defines the 
\textit{non-commutative DT (ncDT) invariant}
$A_{n, \beta} \in \mathbb{Z}$. 
Alternatively, $A_{n, \beta}$ is 
defined to be the weighted Euler characteristic 
of the Behrend function on $\mathrm{Hilb}_n(A_Y, \beta)$. 
We set $I_{\beta}(A_Y)$ to be
\begin{align*}
I_{\beta}(A_Y)=\sum_{n\in \mathbb{Z}}A_{n, \beta}q^n. 
\end{align*}
The following result was proved in~\cite{Tcurve2} for 
the unweighted
version, and~\cite{Cala} for the weighted version, 
basically along with the argument in Subsection~\ref{subsec:new}:
\begin{theorem}\label{thm:flop}
We have the following identities:
\begin{align*}
&\sum_{f_{\ast}\beta=0}I_{\beta}(A_Y)t^{\beta} =
\prod_{k\ge 1}(1-(-q)^k)^{k\chi(X)}\left(\sum_{f_{\ast}\beta=0} I_{\beta}(X)t^{\beta} \right)
\left(\sum_{f_{\ast}\beta=0} I_{-\beta}(X)t^{\beta} \right) \\
&\frac{\sum_{\beta}I_{\beta}(X) t^{\beta}}{\sum_{f_{\ast}\beta=0}
I_{\beta}(X) t^{\beta}}
=\frac{\sum_{\beta}I_{\beta}(A_Y) t^{\beta}}{\sum_{f_{\ast}\beta=0}
I_{\beta}(A_Y) t^{\beta}}
=\frac{\sum_{\beta}I_{\phi_{\ast}\beta}(X^{\dag}) 
t^{\beta}}{\sum_{f_{\ast}\beta=0}
I_{\phi_{\ast}\beta}(X^{\dag}) t^{\beta}}. 
\end{align*}
\end{theorem}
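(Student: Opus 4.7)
The plan is to apply the strategy outlined in Subsection~\ref{subsec:new} to the three derived-equivalent categories appearing in (\ref{genMC}). The main objects of interest live in the subcategory $\mathcal{D}_\star$ of $D^b\mathrm{Coh}(\star)$ generated by the distinguished object ($\mathcal{O}_X$, $A_Y$, or $\mathcal{O}_{X^{\dag}}$) together with coherent sheaves supported in dimension at most one; this is the D0-D2-D6 category. The derived equivalences $\Phi$ and $\Psi$ in (\ref{genMC}) restrict, up to shifts compatible with the ideal-sheaf normalization, to equivalences among these three subcategories, so weak stability conditions and their wall-and-chamber structures transport between them.

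First I would construct a two-parameter family $\mathcal{W}$ of weak stability conditions on $\mathcal{D}_{A_Y}$ along the lines of~\cite{Tcurve1},~\cite{Tolim2}, so that $\mathcal{W}$ contains two distinguished chambers for objects of class $(1,0,-\beta,-n)$: in one chamber the semistable objects are exactly the surjections $A_Y \twoheadrightarrow F$ defining $\mathrm{Hilb}_n(A_Y, \beta)$, while in an opposite chamber they are pulled back from ideal sheaves of curves in $X$ via $\Phi^{-1}$. A parallel picture on $\mathcal{D}_{X^{\dag}}$ arises via $\Psi$. The key geometric input is that $\Phi$ sends perverse point sheaves of $A_Y$ supported on the exceptional locus of $f$ either to structure sheaves of exceptional curves in $X$ or to their one-term shifts, and that the sign of the exceptional homology class flips on the $X^{\dag}$ side; this is what forces both $I_\beta(X)$ and $I_{-\beta}(X)$ to appear in the first identity.

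Next I would run the Joyce-Song--Kontsevich-Soibelman wall-crossing machinery in the motivic Hall algebra of $\mathcal{D}_{A_Y}$ between the ncDT chamber and the ideal-sheaf chamber. Walls are indexed by one-dimensional semistable sheaves $F$ of class $\beta'$, and the universal factor crossed at each wall is an exponential in the invariants $N_{n,\beta'}$. Grouping walls with $f_\ast\beta' = 0$ and summing their contributions produces exactly the right-hand side of the first identity, the two series factors coming from the two sign conventions described above and the prefactor $\prod_{k\ge 1}(1-(-q)^k)^{k\chi(X)}$ coming from the D0-contribution computed in Example~\ref{exii}~(i). For the second identity, I would observe that walls with $f_\ast\beta' \neq 0$ are common to all three wall-and-chamber structures on $\mathcal{D}_X$, $\mathcal{D}_{A_Y}$, $\mathcal{D}_{X^{\dag}}$, since such $N_{n,\beta'}$ depend only on the underlying scheme $Y$ and are preserved by $\Phi$ and $\Psi$; dividing out the contribution of the classes with $f_\ast\beta = 0$, which is exactly the denominator in each of the three ratios, yields three quotient series that therefore coincide.

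The main obstacle will be the careful analysis of the ncDT chamber on $\mathcal{D}_{A_Y}$: one has to verify that in a suitable limit of $\mathcal{W}$ the weak stability really specializes to the condition that $E \in \mathcal{D}_{A_Y}$ is a surjection $A_Y \twoheadrightarrow F$ with $F$ of one-dimensional support, and then track how the wall-crossing across the locus $f_\ast\beta = 0$ splits under $\Phi$ versus $\Psi$ into the two sign-reversed contributions. A secondary subtlety, required for the weighted statement proved in~\cite{Cala}, is to extend the analytic Chern-Simons description of Theorem~\ref{thm:CS} to the relevant moduli stacks of perverse coherent sheaves on $A_Y$, so that the Behrend-function identities underlying the wall-crossing formula continue to hold; this is precisely the point flagged in the remark following Theorem~\ref{thm:main}. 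Once those ingredients are in place, the rest is a rather formal consequence of the general wall-crossing framework of~\cite{JS},~\cite{K-S}.
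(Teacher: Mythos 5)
Your proposal follows essentially the same route as the paper, which establishes this theorem (in \cite{Tcurve2} for the unweighted and \cite{Cala} for the weighted version) by wall-crossing for weak stability conditions on the D0-D2-D6 category attached to the flop via perverse coherent sheaves, transported through the Van den Bergh equivalences (\ref{genMC}) exactly as in the strategy of Subsection~\ref{subsec:new}. Your identification of the ncDT and ideal-sheaf chambers, the explanation of the sign flip on exceptional classes producing the $I_{-\beta}(X)$ factor and the $\prod_{k\ge 1}(1-(-q)^k)^{k\chi(X)}$ prefactor, and the caveat about extending Theorem~\ref{thm:CS} to moduli of complexes for the weighted version all match the paper's account.
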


\begin{exam}
Let 
$Y=(xy+zw=0) \subset \mathbb{C}^4$
be the conifold singularity, and 
take two crepant small resolutions (\ref{small})
by blowing up at the ideals 
$(x, z)$ and $(x, w)$. 
In this case, the algebra $A_Y$
is the path algebra of the following quiver
\begin{align*}
\xymatrix{
\bullet
 \ar@/^/[rr]|{a_2} \ar@/^1.5pc/[rr]|{a_1}&&
\bullet \ar@/^/[ll]|{b_1} \ar@/^1.5pc/[ll]|{b_2} 
} 
\end{align*}
with relation given by the derivations of the 
super potential $W=a_1 b_1 a_2 b_2-a_1 b_2 a_2 b_1$. 
Although $X$ is not projective in this case, 
the ncDT invariant $A_{n, m[C]} \in \mathbb{Z}$
makes sense, and coincides with the weighted Euler characteristic 
of the moduli space of framed $A_Y$-representations
with dimension vector $(n, m+n)$. 
The proof of Theorem~\ref{thm:flop} also 
works in this situation.
Using Example~\ref{exii} (ii), 
the first identity of Theorem~\ref{thm:flop}
becomes
\begin{align*}
\sum_{n, m}A_{n, m[C]}q^n t^{m} =
\prod_{k\ge 1}(1-(-q)^k)^{-2k} \prod_{k\ge 1}(1-(-q)^k t)^k
\prod_{k\ge 1}(1-(-q)^k t^{-1})^k.
\end{align*}
The above formula was first
conjectured by Szendr{\H o}i~\cite{Sz}, 
and later proved by Young~\cite{Young1}, Nagao-Nakajima~\cite{NN}. 
\end{exam}
\begin{remark}
In general for a quiver $Q$ with a super potential $W$, 
we are able to define the ncDT theory for $(Q, W)$. 
A mutation of the pair $(Q, W)$ defines another 
quiver with a super potential $(Q^{\dag}, W^{\dag})$.
The relationship between ncDT invariants on 
$(Q, W)$ and $(Q^{\dag}, W^{\dag})$ is 
described in terms of cluster transformations. 
We refer to~\cite{K-S}, \cite{Nagao} for the detail. 
\end{remark}

\section{Bridgeland stability conditions}\label{sec:Br}
\subsection{Definitions}\label{subsec:defi}
We recall the definition of Bridgeland stability conditions on 
a triangulated category $\mathcal{D}$. 
We fix a finitely generated free abelian group $\Gamma$
with a norm $\rVert \ast \lVert$ on $\Gamma_{\mathbb{R}}$
together with a group homomorphism 
$\mathrm{cl} \colon K(\mathcal{D}) \to \Gamma$.
A typical example is that 
$\mathcal{D}=D^b \mathrm{Coh}(X)$ for a smooth projective
variety $X$, $\Gamma$
is the image of the Chern character map 
$\mathrm{ch} \colon K(X) \to H^{\ast}(X, \mathbb{Q})$, and 
$\mathrm{cl}=\mathrm{ch}$.  
By taking the dual of $\mathrm{cl}$, we always 
regard a group homomorhpism $\Gamma \to \mathbb{C}$
as a group homomorphism $K(\mathcal{D}) \to \mathbb{C}$. 
\begin{definition}[\cite{Brs1}]\label{def:Bst}
A \textit{stability condition} on $\mathcal{D}$ is data
$\sigma=(Z, \{\mathcal{P}(\phi)\}_{\phi \in \mathbb{R}})$, 
where $Z \colon \Gamma \to \mathbb{C}$ is a group 
homomorphism (called \textit{central charge}), 
$\mathcal{P}(\phi) \subset \mathcal{D}$ is a full
subcategory (called $\sigma$-\textit{semistable objects with phase} 
$\phi$) satisfying the following conditions: 
\begin{itemize}
\item For $0\neq E \in \mathcal{P}(\phi)$, 
we have $Z(E) \in \mathbb{R}_{>0} \exp(\sqrt{-1} \pi \phi)$. 
\item  For all $\phi \in \mathbb{R}$, we have 
$\mathcal{P}(\phi+1)=\mathcal{P}(\phi)[1]$. 
\item  For $\phi_1>\phi_2$ and $E_i \in \mathcal{P}(\phi_i)$, we have 
$\Hom(E_1, E_2)=0$. 
\item (Harder-Narasimhan property): 
For each $0\neq E \in \mathcal{D}$, there is
 a collection of distinguished triangles 
$E_{i-1} \to E_i \to F_i \to E_{i-1}[1]$,  
$E_N=E, E_0=0$
with $F_i \in \mathcal{P}(\phi_i)$ and  
$\phi_1> \phi_2> \cdots > \phi_N$. 
\end{itemize}
\end{definition}
Another way defining a stability condition is 
to use a t-structure
as follows:
\begin{lemma}[\cite{Brs1}]\label{lem:tst}
Giving a stability condition on 
$\mathcal{D}$ is equivalent to giving 
data $(Z, \mathcal{A})$, where
$Z \colon \Gamma \to \mathbb{C}$ is a group homomorphism, 
$\mathcal{A} \subset \mathcal{D}$
is the heart of a bounded t-structure, satisfying 
\begin{align}\label{Z}
Z(\mathcal{A} \setminus \{0\}) \in \{ r \exp(i\pi \phi) : r>0, 0<\phi \le 1\}
\end{align}
together with the Harder-Narasimhan property:
for any $E \in \mathcal{A}$, there 
exists a filtration 
$0=E_0 \subset E_1 \subset \cdots \subset E_N=E$
such that $F_i=E_i/E_{i-1}$ is 
$Z$-semistable with 
$\mathrm{arg} Z(F_i)>\mathrm{arg} Z(F_{i+1})$ for all $i$. 
Here $E \in \mathcal{A}$ is $Z$-\textit{semistable} 
if for any subobject $0\neq F \subsetneq E$, 
we have $\mathrm{arg}Z(F)<(\le) \mathrm{arg}Z(E)$. 
\end{lemma}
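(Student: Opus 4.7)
The plan is to prove both implications by translating between a slicing $\{\mathcal{P}(\phi)\}_{\phi \in \mathbb{R}}$ and the pair $(Z, \mathcal{A})$, following the standard dictionary between slicings and bounded t-structures.

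For the forward direction, assume $\sigma=(Z, \{\mathcal{P}(\phi)\})$ is given. I set
\[
\mathcal{A} := \mathcal{P}((0,1]),
\]
the extension-closed full subcategory generated by all $\mathcal{P}(\phi)$ with $\phi \in (0,1]$. First I would check that $\mathcal{A}$ is the heart of a bounded t-structure by defining $\mathcal{D}^{\le 0} := \mathcal{P}((0,\infty))$ and $\mathcal{D}^{\ge 0} := \mathcal{P}((-\infty,1])$. The shift axiom $\mathcal{P}(\phi+1)=\mathcal{P}(\phi)[1]$ yields $\mathcal{D}^{\le -1} \subset \mathcal{D}^{\le 0}$; the $\Hom$-vanishing axiom gives orthogonality $\Hom(\mathcal{D}^{\le 0}, \mathcal{D}^{\ge 1})=0$; and the HN axiom of $\sigma$ provides, for every $E \in \mathcal{D}$, the required truncation triangle by grouping the HN factors according to whether their phases exceed $0$. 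Boundedness follows because every $E$ has a finite HN decomposition. Then (Z) is immediate from the definition of $\mathcal{P}(\phi)$ together with the identification $\mathbb{R}_{>0}\exp(i\pi\phi)$ for $\phi \in (0,1]$ with the closed upper half plane minus the negative real axis. The HN property inside $\mathcal{A}$ is obtained by restricting the ambient HN filtration of $E \in \mathcal{A}$: every HN factor has phase in $(0,1]$ (otherwise the outermost factor would have nontrivial $\Hom$ from or to $E$, contradicting $E \in \mathcal{A}$).

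For the backward direction, given $(Z, \mathcal{A})$ I set, for $\phi \in (0,1]$,
\[
\mathcal{P}(\phi) := \{E \in \mathcal{A} : E \text{ is } Z\text{-semistable with } Z(E) \in \mathbb{R}_{>0}\exp(i\pi\phi)\} \cup \{0\},
\]
and extend to all $\phi$ by $\mathcal{P}(\phi+n):=\mathcal{P}(\phi)[n]$ for $n \in \mathbb{Z}$. The central-charge axiom and the shift axiom hold by construction. To verify $\Hom$-vanishing for $\phi_1>\phi_2$ and $E_i \in \mathcal{P}(\phi_i)$, I would first reduce by shifting to the situation where $\phi_1, \phi_2 \in (0, 1]$ with $\phi_1>\phi_2$, or where $\phi_1 \in (0, 1]$ and $\phi_2 \le 0$, or symmetric, etc.; in each case a nonzero morphism $E_1 \to E_2$ in $\mathcal{D}$ produces, via the t-structure cohomology functors $\mathcal{H}^i_{\mathcal{A}}$, a nonzero morphism inside $\mathcal{A}$ between two $Z$-semistable objects with strictly decreasing phase, which is forbidden by semistability together with (Z). Finally, to establish the global HN property, given $0 \neq E \in \mathcal{D}$ I take its cohomology objects $\mathcal{H}^i_{\mathcal{A}}(E) \in \mathcal{A}$ with respect to the bounded t-structure, apply the HN property inside $\mathcal{A}$ to each, and then splice these refined filtrations into the filtration coming from the t-structure. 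Arranging the resulting factors in order of decreasing phase (accounting for the shifts $[-i]$) produces the required tower of triangles.

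The bookkeeping in the backward direction is the most delicate step: one must check that gluing the HN filtrations of the $\mathcal{H}^i_{\mathcal{A}}(E)[-i]$ onto the t-structure filtration produces a genuine HN filtration of $E$ whose factors have strictly decreasing phases. The key point is that any factor of $\mathcal{H}^i_{\mathcal{A}}(E)[-i]$ has phase in $(-i, -i+1]$, so factors arising from different $i$ automatically separate, and within a fixed $i$ the decreasing-phase condition is built into the HN filtration in $\mathcal{A}$. With uniqueness of HN filtrations (a standard by-product of semistability), one also concludes uniqueness of the slicing determined by $(Z,\mathcal{A})$, completing the bijection.
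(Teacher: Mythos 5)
Your proposal uses exactly the same dictionary as the paper: $\mathcal{A}$ is the extension closure of the $\mathcal{P}(\phi)$ for $\phi\in(0,1]$ in one direction, and in the other direction $\mathcal{P}(\phi)$ is recovered as the $Z$-semistable objects of $\mathcal{A}$ of the appropriate phase, extended by the shift rule. The paper's proof records only this correspondence and delegates the verifications to Bridgeland's original article, whereas you additionally sketch those verifications (t-structure axioms via grouping HN factors, Hom-vanishing via the cohomology functors, and splicing the HN filtrations of the $\mathcal{H}^i_{\mathcal{A}}(E)[-i]$); these outlines are correct and consistent with the standard argument.
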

\begin{proof}
The correspondence is as follows: 
given $(Z, \{\mathcal{P}(\phi)\}_{\phi \in \mathbb{R}})$, 
the corresponding heart $\mathcal{A}$
is the extension closure of $\mathcal{P}(\phi)$ for 
$0<\phi \le 1$. 
Conversely given $(Z, \mathcal{A})$, 
the category $\mathcal{P}(\phi)$ is defined 
to be the category of $Z$-semistable 
objects $E \in \mathcal{A}$ with 
$Z(E) \in \mathbb{R}_{>0} \exp(i\pi \phi)$. 
Other $\mathcal{P}(\phi)$ are defined by the rule 
$\mathcal{P}(\phi+1)=\mathcal{P}(\phi)[1]$.
\end{proof} 
\begin{exam}
Let $C$ be a smooth projective curve, 
$\mathcal{D}=D^b \mathrm{Coh}(C)$,
$\Gamma=\mathbb{Z}^{\oplus 2}$
and $\mathrm{cl}=(\mathrm{rank}, \mathrm{deg})$. 
We set $Z \colon \Gamma \to \mathbb{C}$
to be $(r, d) \mapsto -d+ ir$. 
Then $(Z, \mathrm{Coh}(C))$ is a stability condition, 
whose $Z$-semistable objects coincide with 
classical semistable sheaves on $C$. 
\end{exam}
\subsection{The space of stability conditions}
The space of stability conditions is defined as follows: 
\begin{definition}
We define $\mathrm{Stab}_{\Gamma}(\mathcal{D})$
to be the set of stability conditions
on $\mathcal{D}$ satisfying the 
\textit{support property}, i.e.  
there is a constant $C>0$
such that $\lVert \mathrm{cl}(E) \rVert/\lvert Z(E) \rvert<C$
holds 
for any $0\neq E \in \cup_{\phi \in \mathbb{R}} \mathcal{P}(\phi)$. 
\end{definition}
The main result of Bridgeland~\cite{Brs1} shows that 
the set $\mathrm{Stab}_{\Gamma}(\mathcal{D})$ has a 
structure of a complex manifold. 
If
$\mathcal{D}=D^b \mathrm{Coh}(X)$ for a smooth projective
variety $X$, $\Gamma=\mathrm{Im}(\mathrm{ch})$
and $\mathrm{cl}=\mathrm{ch}$, 
we set $\mathrm{Stab}(X)=\mathrm{Stab}_{\Gamma}(\mathcal{D})$. 
Let $\mathrm{Auteq}(X)$ be the group of exact autequivalences
of $D^b \mathrm{Coh}(X)$. 
The space
$\mathrm{Stab}(X)$ admits
a left action of $\mathrm{Auteq}(X)$
and a right action of $\mathbb{C}$. 
The latter action is given by 
$(Z, \{\mathcal{P}(\phi)\}_{\phi \in \mathbb{R}}) \cdot \lambda =
(e^{-i\pi \lambda}Z, \{\mathcal{P}(\phi + \mathrm{Re}\lambda)\}_{\phi \in \mathbb{R}})$
for $\lambda \in \mathbb{C}$. 
We are interested in the double quotient stack
\begin{align}\label{double}
\left[ \mathrm{Auteq}(X) \backslash
\mathrm{Stab}(X) / \mathbb{C} \right]. 
\end{align}
The conjecture by Bridgeland~\cite{Brs6} is that
if $X$ is a Calabi-Yau manifold, the 
above double quotient stack
contains the stringy K\"ahler moduli space of 
$X$, that is the moduli space of complex structures of 
a mirror manifold of $X$. 

\begin{exam}
(i) If $C$ is an elliptic curve, then 
(\ref{double}) is shown in~\cite{Brs1} to be
isomorphic to 
the modular curve $\mathrm{SL}_2(\mathbb{Z}) \backslash \mathbb{H}$.  
This is compatible with the fact that $C$ is self mirror. 

(ii) Let $\pi \colon X \to \mathbb{P}^2$ be 
the total space of $\omega_{\mathbb{P}^2}$, 
which is a non-compact Calabi-Yau 3-fold. 
In this case, $\mathrm{Stab}(X)$ is defined
to be $\mathrm{Stab}_{\Gamma}(\mathcal{D})$
where $\mathcal{D}$ is the bounded
derived category of compact supported coherent sheaves on $X$, 
$\Gamma$ is the image of $\mathrm{ch} \circ \pi_{\ast}$ in 
$H^{\ast}(\mathbb{P}^2, \mathbb{Q})$, 
and 
$\mathrm{cl}=\mathrm{ch} \circ \pi_{\ast}$. 
Then the quotient stack (\ref{double})
contains $[(\mathbb{C} \setminus \mu_3)/\mu_3]$
by~\cite{BaMa}. 
The latter stack is the parameter space $\psi^3$
of the mirror 
family of $X$ given by 
\begin{align*}
\{y_0^3 + y_1^3 + y_2^3 -3\psi y_1 y_2 y_3=0\}
\subset \mathbb{P}^2. 
\end{align*}
\end{exam}
The space (\ref{double})
is most interesting for projective 
Calabi-Yau 3-folds, e.g. quintic
hypersurfaces in $\mathbb{P}^4$. 
Even in the quintic 3-fold case,  
the space (\ref{double}) 
is very difficult to study. 
In this case, Bridgeland's conjecture~\cite{Brs6}
is stated in the following way: 
\begin{conjecture}\label{conj:quintic}
Let $X \subset \mathbb{P}^4$
be a smooth quintic 3-fold, 
and set $\mathcal{M}_K=[(\mathbb{C} \setminus \mu_5)/\mu_5]$.
Then there is an embedding
\begin{align*}
\mathcal{M}_K \hookrightarrow \left[ \mathrm{Auteq}(X) \backslash
\mathrm{Stab}(X) / \mathbb{C} \right]. 
\end{align*}
\end{conjecture}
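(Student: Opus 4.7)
The plan is to construct a holomorphic map $\mathcal{M}_K \to [\mathrm{Auteq}(X)\backslash \mathrm{Stab}(X)/\mathbb{C}]$ by producing local families of stability conditions near each of the three distinguished points of $\mathcal{M}_K$ --- the large volume limit $q = 0$, the Gepner point $\psi = 0$, and the conifold point $\psi^5 = 1$ --- and then verifying that these local charts glue via the expected monodromy action of $\pi_1(\mathcal{M}_K)$.

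First I would establish the non-emptiness of $\mathrm{Stab}(X)$. Following~\cite{BMT}, this reduces to proving the conjectural Bogomolov-Gieseker type inequality evaluating $\mathrm{ch}_3$ of tilt-semistable two-term complexes on the quintic. Granting this inequality, one obtains a two-parameter family of tilt stability conditions $\sigma_{\omega, B}$ with $B \in \mathrm{NS}(X)_{\mathbb{R}}$ and $\omega$ in the ample cone; a second tilt then yields genuine Bridgeland stability conditions, and the parameter $B + i\omega$ with $\omega = tH$ ($H$ a hyperplane class, $t \gg 0$) parameterizes a chart near the large volume cusp, matching the local coordinate $q = \exp(2\pi i (B + i\omega))$ on $\mathcal{M}_K$ at infinity. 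One needs to check that on this chart the $Z$-semistable objects near the cusp recover classical $H$-Gieseker semistable sheaves, so that the chart is canonically defined modulo the $\mathbb{Z}$-action generated by $\otimes\mathcal{O}_X(H)$.

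Next I would construct charts near the remaining two special points. At the Gepner point, Orlov's equivalence~\cite{Orsin} identifies $D^b\mathrm{Coh}(X)$ with the graded category of matrix factorizations of the quintic polynomial, and a stability condition should be built from the $\mu_5$-equivariant structure together with a natural quiver heart, producing an orbifold chart $[\mathbb{C}/\mu_5]$ compatible with the Landau-Ginzburg side of the mirror. At the conifold point a single spherical object $S \in D^b\mathrm{Coh}(X)$ becomes massless; the local model should be a neighborhood of the wall $Z(S) = 0$ with monodromy given by the spherical twist $T_S$, in analogy with the construction for $\mathrm{Tot}(\omega_{\mathbb{P}^2})$ in~\cite{BaMa}. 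To glue the three local families, one must identify the central charge $Z$ of each with the corresponding period integral on the mirror family, i.e. verify $Z(E) = \int_{\gamma_E}\Omega$ for the cycle $\gamma_E$ mirror to $\mathrm{ch}(E)$, and confirm that the monodromies around the three cusps act by the expected autoequivalences (tensor by $\mathcal{O}_X(H)$, the Orlov grading shift, and $T_S$, respectively). Bridgeland's deformation theorem then shows the resulting globalized map is a local isomorphism onto a connected component of the right-hand side, yielding the embedding.

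The hard part will be the first step. The BMT inequality for quintics is a substantial open conjecture and appears to require genuinely new techniques beyond those used for surfaces and local threefolds. Even granting it, the globalization --- showing that the locally constructed central charges assemble into a single period map on all of $\mathcal{M}_K$ equivariant under $\pi_1(\mathcal{M}_K)$ --- requires a precise match between solutions of the Picard-Fuchs equation for the mirror quintic and stability data on $D^b\mathrm{Coh}(X)$, a correspondence presently available only in much simpler local Calabi-Yau examples such as those of~\cite{BaMa},~\cite{Tst},~\cite{Tst2}. A genuine proof would likely also require controlling the walls one crosses along paths in $\mathcal{M}_K$ connecting the three cusps, which without additional input from homological mirror symmetry remains out of reach.
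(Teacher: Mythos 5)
The statement you are asked to prove is not proved in the paper at all: it is stated as an open conjecture (due to Bridgeland \cite{Brs6}), and the paper explicitly remarks immediately afterwards that even the non-emptiness of $\mathrm{Stab}(X)$ for a projective Calabi--Yau 3-fold is unknown. So there is no ``paper's own proof'' to compare against, and your proposal cannot be judged correct as a proof; it is a research program. To your credit, the program you sketch is exactly the one the paper itself envisions --- non-emptiness via the double-tilt construction and the Bogomolov--Gieseker-type inequality of \cite{BMT}, a chart at the Gepner point via Orlov's equivalence with graded matrix factorizations, and an identification of central charges with period integrals of the mirror quintic solving the Picard--Fuchs equation (cf.\ \cite{TGep2}) --- and you honestly flag where it breaks down.

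The concrete gaps, in the paper's own terms, are these. Your first step requires Conjecture~\ref{conj:BMT}, which is open for the quintic (it is known only for $\mathbb{P}^3$, the quadric 3-fold, and certain abelian 3-folds); moreover the paper notes that even granting the weaker inequality $\mathrm{ch}_3^B(E) < \mathrm{ch}_1^B(E)\omega^2/2$ one only gets the Harder--Narasimhan property, while the support property --- needed for $(Z_{B,\omega},\mathcal{A}_{B,\omega})$ to define a point of $\mathrm{Stab}(X)$ and hence for any deformation or gluing argument --- remains open. Your Gepner-point chart is precisely Conjecture~\ref{conj:Gep}, also open for $n=d=5$. And the gluing step presupposes that the locally defined stability conditions lie in a single connected component and that the monodromy autoequivalences act as claimed, which currently has no proof even in outline. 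So while your sketch is a faithful account of the expected picture, every one of its load-bearing steps is itself an unproved conjecture; nothing in it can be spliced into the paper as an argument.
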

The above embedding should be 
given by the solutions of the Picard-Fuchs equation 
which the period integrals
of the mirror family of $X$ satisfy. 
Its explicit description is
available in~\cite{TGep2}.
However in the projective Calabi-Yau 3-fold case, 
it is even not known that
whether $\mathrm{Stab}(X)$ is non-empty or not. 
The first issue in solving Conjecture~\ref{conj:quintic}
is to construct stability conditions, which we discuss in the next
subsection. 
\subsection{Existence problem}
It has been a serious issue to construct
Bridgeland stability conditions on projective 
Calabi-Yau 3-folds. 
Contrary to the one dimensional case, it turns out 
that there is no stability condition 
$\sigma \in \mathrm{Stab}(X)$ of the form 
$\sigma=(Z, \mathrm{Coh}(X))$ if 
$\dim X \ge 2$. From the arguments in string theory, we 
expect the following conjecture:
\begin{conjecture}\label{conj:Bw}
Let $X$ be a smooth projective variety and 
take $B+i\omega \in H^2(X, \mathbb{C})$
with $\omega$ ample class. 
Then there exists the heart of a bounded 
t-structure $\mathcal{A}_{B, \omega}$ on $D^b \mathrm{Coh}(X)$
such that the pair 
$\sigma_{B, \omega}=(Z_{B, \omega}, \mathcal{A}_{B, \omega})$
determines a point in $\mathrm{Stab}(X)$, where $Z_{B, \omega}$ is given by
\begin{align*}
Z_{B, \omega}(E)=-\int_{X} e^{-i\omega} \mathrm{ch}^B(E). 
\end{align*}
Here $\mathrm{ch}^B(E)$ is defined to be $e^{-B} \mathrm{ch}(E)$. 
\end{conjecture}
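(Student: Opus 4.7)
The plan is to follow the double-tilting strategy initiated by Bridgeland for K3 surfaces and extended in~\cite{BMT} to threefolds, building $\mathcal{A}_{B,\omega}$ from $\mathrm{Coh}(X)$ by iterated tilting against auxiliary slope functions extracted from $Z_{B,\omega}$. As the excerpt already indicates, the key obstruction is a conjectural Bogomolov--Gieseker type inequality for the third Chern character of certain semistable complexes; the plan reduces the existence problem to that single inequality.

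First I would define the $B$-twisted slope
\[
\mu_{B,\omega}(E) = \frac{\omega^{n-1}\cdot \mathrm{ch}_1^B(E)}{\mathrm{rank}(E)}
\]
on $\mathrm{Coh}(X)$, where $n=\dim X$, setting $\mu_{B,\omega}(E) = +\infty$ for torsion $E$. The classical Bogomolov--Gieseker inequality together with standard noetherian arguments gives Harder--Narasimhan filtrations with respect to $\mu_{B,\omega}$. Let $(\mathcal{T}_{B,\omega},\mathcal{F}_{B,\omega})$ be the torsion pair collecting HN factors with $\mu_{B,\omega} > 0$ and $\mu_{B,\omega} \le 0$ respectively, and set $\mathrm{Coh}^{B,\omega}(X) = \langle \mathcal{F}_{B,\omega}[1], \mathcal{T}_{B,\omega}\rangle$. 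For $\dim X = 2$ a direct check shows that $(Z_{B,\omega}, \mathrm{Coh}^{B,\omega}(X))$ satisfies the positivity condition (\ref{Z}): the kernel of $\mathrm{Im}\, Z_{B,\omega}$ is constrained precisely by the classical BG inequality, forcing $\mathrm{Re}\, Z_{B,\omega} < 0$ there, and the support property is extracted along the lines of Bayer--Macr\`i.

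For $\dim X = 3$ a second tilt is required. On $\mathrm{Coh}^{B,\omega}(X)$ I would introduce the tilt slope
\[
\nu_{B,\omega}(E) = \frac{\omega \cdot \mathrm{ch}_2^B(E) - \tfrac{1}{2}\omega^3\,\mathrm{rank}(E)}{\omega^2 \cdot \mathrm{ch}_1^B(E)}
\]
(with $\nu_{B,\omega} = +\infty$ when the denominator vanishes), prove HN filtrations for $\nu_{B,\omega}$ using the BG inequality already established inside $\mathrm{Coh}^{B,\omega}(X)$, and tilt once more at $\nu_{B,\omega} = 0$ to produce the candidate heart $\mathcal{A}_{B,\omega}$. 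After unravelling the definitions, the positivity condition (\ref{Z}) for $(Z_{B,\omega}, \mathcal{A}_{B,\omega})$ becomes equivalent to the following inequality for every $\nu_{B,\omega}$-tilt-semistable $E \in \mathrm{Coh}^{B,\omega}(X)$ with $\nu_{B,\omega}(E) = 0$:
\[
\mathrm{ch}_3^B(E) < \tfrac{1}{18}\,\omega^2 \cdot \mathrm{ch}_1^B(E).
\]
Granting this inequality, the HN property and the support property for the resulting $\sigma_{B,\omega}$ are recovered by standard noetherian estimates together with the same bound.

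The main obstacle is precisely this conjectural inequality on $\mathrm{ch}_3^B$. The classical BG inequality on $\mathrm{ch}_2$ can be proved by restricting slope-semistable sheaves to general hyperplane sections via Mehta--Ramanathan, but the analogous reduction fails here because a $\nu_{B,\omega}$-tilt-semistable object of $\mathrm{Coh}^{B,\omega}(X)$ is a genuine two-term complex whose naive restriction to a surface need not remain tilt-semistable. Verifications are presently known only in special geometries such as projective spaces, abelian threefolds, the quadric threefold, and a handful of Fano threefolds, and, as stated in the text, a uniform proof valid for all smooth projective Calabi--Yau threefolds appears to require a substantially new idea.
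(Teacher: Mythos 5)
The statement you are being asked about is a conjecture, and the paper offers no proof of it: it is known for $\dim X\le 2$, and for threefolds it is reduced in~\cite{BMT} to the conjectural Bogomolov--Gieseker type inequality on $\mathrm{ch}_3$ (Conjecture~\ref{conj:BMT}), which remains open. Your proposal reproduces exactly that double-tilting reduction, and you are right to identify the $\mathrm{ch}_3$ inequality as the missing ingredient; so there is no disagreement of strategy, only the unavoidable fact that neither you nor the paper actually proves the conjecture.

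Three points in your write-up do not match the paper. First, with the central charge $Z_{B,\omega}(E)=-\int_X e^{-i\omega}\mathrm{ch}^B(E)$ one computes $\mathrm{Im}\,Z_{B,\omega}(E)=\mathrm{ch}_2^B(E)\,\omega-\mathrm{ch}_0^B(E)\,\omega^3/6$, so the numerator of your tilt slope should carry $\omega^3\,\mathrm{rank}(E)/6$, not $\omega^3\,\mathrm{rank}(E)/2$; with your coefficient you would be tilting at the wrong locus relative to $Z_{B,\omega}$. Second, the positivity condition (\ref{Z}) for $(Z_{B,\omega},\mathcal{A}_{B,\omega})$ is not equivalent to the $1/18$ bound: the paper notes that the weaker inequality $\mathrm{ch}_3^B(E)<\mathrm{ch}_1^B(E)\,\omega^2/2$ already suffices for (\ref{Z}) and for the Harder--Narasimhan property, while the sharper constant $1/18$ in Conjecture~\ref{conj:BMT} is fixed by requiring equality for tilt-semistable objects with zero discriminant and is what the applications (the Fujita-type statement and Theorem~\ref{thm:DM}) actually need. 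Third, and most substantively, your claim that the support property is ``recovered by standard noetherian estimates together with the same bound'' overstates what is known: the paper states explicitly that even granting the inequality the support property remains open, so your outline would not yield a point of $\mathrm{Stab}(X)$ as defined (which requires the support property) without a genuinely new argument.
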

The resulting stability conditions are expected to form 
a \textit{neighborhood at the large volume limit} in terms of string theory. 
The above conjecture is known to hold if $\dim X \le 2$. 
In the $\dim X=2$ case, the heart $\mathcal{A}_{B, \omega}$
is constructed to be a certain tilting
of $\mathrm{Coh}(X)$, which we are going to review.

Let $X$ be a $d$-dimensional smooth projective variety
and take $B+i\omega \in H^2(X, \mathbb{C})$ with $\omega$
ample. 
The $\omega$-slope function on $\mathrm{Coh}(X)$
is defined to be 
\begin{align*}
\mu_{\omega}(E)=
\frac{\mathrm{ch}_1(E) \cdot \omega^{d-1}}{\mathrm{rank}(E)}
\in \mathbb{R} \cup \{\infty\}.
\end{align*}
Here
$\mu_{\omega}(E)=\infty$ if $\mathrm{rank}(E)=0$. 
\begin{definition}
An object
$E \in \mathrm{Coh}(X)$ is 
$\mu_{\omega}$\textit{-(semi)stable} if for any 
non-zero subobject $0\neq F \subsetneq E$, we have
$\mu_{\omega}(F)<(\le) \mu_{\omega}(E/F)$. 
\end{definition}
We define the pair of subcategories 
$(\mathcal{T}_{B, \omega}, \mathcal{F}_{B, \omega})$
of $\mathrm{Coh}(X)$ to be
\begin{align*}
&\mathcal{T}_{B, \omega}
=\langle E \in \mathrm{Coh}(X): 
E \mbox{ is } \mu_{\omega} \mbox{-semistable with }
\mu_{\omega}(E)> B \omega^{d-1} \rangle \\
&\mathcal{F}_{B, \omega}
=\langle E \in \mathrm{Coh}(X): 
E \mbox{ is } \mu_{\omega} \mbox{-semistable with }
\mu_{\omega}(E)\le B \omega^{d-1} \rangle. 
\end{align*}
Here $\langle \ast \rangle$ means the extension closure. 
The existence of Harder-Narasimhan filtrations 
with respect to the $\mu_{\omega}$-stability 
implies that the pair $(\mathcal{T}_{B, \omega}, \mathcal{F}_{B, \omega})$
is a torsion pair (cf.~\cite{HRS}) of $\mathrm{Coh}(X)$. 
Its tilting defines another heart
\begin{align*}
\mathcal{B}_{B, \omega} =
\langle \mathcal{F}_{B, \omega}[1], \mathcal{T}_{B, \omega}
\rangle \subset D^b \mathrm{Coh}(X). 
\end{align*}
The following result is due to~\cite{Brs2}, \cite{AB}, \cite{Todext}. 
\begin{proposition}
If $\dim X=2$, then 
$(Z_{B, \omega}, \mathcal{B}_{B, \omega}) \in \mathrm{Stab}(X)$. 
\end{proposition}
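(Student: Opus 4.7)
The plan is to verify the three ingredients separately: that $\mathcal{B}_{B,\omega}$ is the heart of a bounded t-structure, that $Z_{B,\omega}$ satisfies the positivity condition (\ref{Z}) on $\mathcal{B}_{B,\omega}$, and that the pair admits Harder-Narasimhan filtrations plus the support property. The first point is immediate from the Happel-Reiten-Smal\o{} construction once one checks that $(\mathcal{T}_{B,\omega}, \mathcal{F}_{B,\omega})$ is a torsion pair in $\mathrm{Coh}(X)$: the inclusion $\mathcal{T}_{B,\omega} \subset {}^\perp \mathcal{F}_{B,\omega}$ follows from the defining slope inequalities for $\mu_\omega$-semistable factors, and every $E \in \mathrm{Coh}(X)$ decomposes into its torsion-plus-positive-slope part and its non-positive-slope part via the Harder-Narasimhan filtration with respect to $\mu_\omega$.

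Next I would compute $Z_{B,\omega}$ explicitly. Expanding $e^{-i\omega}\mathrm{ch}^B(E)$ for a surface gives
\begin{align*}
Z_{B,\omega}(E) = \tfrac{1}{2}\mathrm{ch}_0(E)\,\omega^2 - \mathrm{ch}_2^B(E) + i\,\omega\cdot \mathrm{ch}_1^B(E).
\end{align*}
For $T \in \mathcal{T}_{B,\omega}$, either $T$ has positive rank and $\mu_\omega(T) > B\cdot\omega$, making $\mathrm{Im}\,Z_{B,\omega}(T) > 0$, or $T$ is a torsion sheaf, in which case either $\omega\cdot c_1(T) > 0$ or $T$ is $0$-dimensional so $\mathrm{Im}\,Z=0$ and $\mathrm{Re}\,Z_{B,\omega}(T) = -\mathrm{ch}_2(T) < 0$. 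For $F[1]$ with $F \in \mathcal{F}_{B,\omega}\setminus\{0\}$ (necessarily torsion-free of positive rank), one has $\mathrm{Im}\,Z_{B,\omega}(F[1]) = -\omega\cdot\mathrm{ch}_1^B(F) \ge 0$, and the critical boundary case $\omega\cdot\mathrm{ch}_1^B(F)=0$ is where the Bogomolov-Gieseker inequality $\mathrm{ch}_1^B(F)^2 \ge 2\,\mathrm{rank}(F)\,\mathrm{ch}_2^B(F)$ for $\mu_\omega$-semistable $F$, combined with the Hodge index theorem which forces $\mathrm{ch}_1^B(F)^2 \le 0$ whenever $\omega\cdot \mathrm{ch}_1^B(F)=0$, yields $\mathrm{ch}_2^B(F) \le 0$ and hence $\mathrm{Re}\,Z_{B,\omega}(F[1]) = \mathrm{ch}_2^B(F) - \tfrac{1}{2}\mathrm{rank}(F)\omega^2 < 0$. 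An arbitrary nonzero object of $\mathcal{B}_{B,\omega}$ is an extension of such $T$ and $F[1]$, so the positivity (\ref{Z}) follows.

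For the Harder-Narasimhan property I would follow Bridgeland's criterion: it suffices to show that the heart $\mathcal{B}_{B,\omega}$ is noetherian and that the image of $\mathrm{Im}\,Z_{B,\omega}$ on $\mathcal{B}_{B,\omega}$ is discrete. Noetherianness can be extracted from the fact that $\mathcal{T}_{B,\omega}$ and $\mathcal{F}_{B,\omega}$ are themselves noetherian (for $B,\omega$ rational, or by the standard approximation argument of Bridgeland-Arcara-Bertram reducing the general case to rational parameters), and discreteness of the image of $\mathrm{Im}\,Z$ follows because $\omega\cdot\mathrm{ch}_1^B$ takes values in a discrete subset of $\mathbb{R}$ when $B, \omega$ are rational. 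The support property is then established from the same Bogomolov-Gieseker plus Hodge index estimates used for positivity: the quadratic form $\mathrm{ch}_1^B(E)^2 - 2\,\mathrm{ch}_0(E)\mathrm{ch}_2^B(E)$ is nonnegative on $Z_{B,\omega}$-semistable objects and controls $\|\mathrm{ch}(E)\|$ uniformly in terms of $|Z_{B,\omega}(E)|$.

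The main obstacle is the boundary case in the positivity check, i.e., ruling out $Z_{B,\omega}(F[1])=0$ for torsion-free $\mu_\omega$-semistable $F$ with $\omega\cdot\mathrm{ch}_1^B(F)=0$; it is here that one cannot avoid combining the sheaf-theoretic Bogomolov-Gieseker inequality with the lattice-theoretic Hodge index theorem, and it is precisely this step that fails to have a direct analogue in dimension three (its would-be analogue being the conjectural inequality of \cite{BMT} referred to in the introduction). Once this is in place, the remaining verifications are routine applications of Bridgeland's general machinery, carried out by the HRS tilting as in~\cite{Brs2}, \cite{AB}, \cite{Todext}.
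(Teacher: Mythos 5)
Your proposal is correct and follows essentially the same route as the paper's own (rough) sketch: compute $Z_{B,\omega}$ explicitly, observe that $\mathrm{Im}\,Z_{B,\omega}\ge 0$ holds by construction of the tilt, and reduce the crucial boundary case $\mathrm{Im}\,Z_{B,\omega}=0\Rightarrow\mathrm{Re}\,Z_{B,\omega}<0$ to the classical Bogomolov--Gieseker inequality of Theorem~\ref{thm:BG}. You simply make explicit several points the paper leaves implicit (the Hodge index step, the Harder--Narasimhan and support properties), and these are handled correctly.
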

\begin{proof}
Here is a rough sketch of the proof: 
if $\dim X=2$, then $Z_{B, \omega}(E)$ is written as
\begin{align*}
Z_{B, \omega}(E)=-\mathrm{ch}_2^{B}(E) + \mathrm{ch}_0^B(E)\omega^2/2 +
i\mathrm{ch}_1^{B}(E) \omega. 
\end{align*}
The construction of $\mathcal{B}_{B, \omega}$ immediately 
implies $\mathrm{Im}Z_{B, \omega}(E) \ge 0$
for any $0\neq E \in \mathcal{B}_{B, \omega}$. 
We need to check that 
$\mathrm{Im}Z_{B, \omega}(E)=0$ implies 
$\mathrm{Re}Z_{B, \omega}(E)<0$. 
This property can be easily deduced from the 
the classical Bogomolov-Gieseker (BG) inequality
in Theorem~\ref{thm:BG} below. 
\end{proof}
The following BG inequality played an important role: 
\begin{theorem}[\cite{Bog}, \cite{Gie}]\label{thm:BG}
Let $X$ be a $d$-dimensional 
smooth projective variety, 
and $E$ a torsion free $\mu_{\omega}$-semistable 
sheaf on $X$. Then we have the following inequality:
\begin{align*}
\left(\mathrm{ch}_1^B(E)^2 -2\mathrm{ch}_0^B(E) 
\mathrm{ch}_2^B(E)  \right) \cdot \omega^{d-2} \ge 0. 
\end{align*} 
\end{theorem}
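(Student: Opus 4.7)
The plan is to reduce the inequality to the classical Bogomolov inequality on a surface. As a first step, I would observe that the $B$-twist is irrelevant: setting $r = \mathrm{rank}(E)$ and expanding $\mathrm{ch}^B(E) = e^{-B}\mathrm{ch}(E)$ directly,
$$\mathrm{ch}_1^B(E)^2 - 2\,\mathrm{ch}_0^B(E)\,\mathrm{ch}_2^B(E) = \mathrm{ch}_1(E)^2 - 2r\,\mathrm{ch}_2(E) =: \Delta(E),$$
since all $B$-dependent contributions cancel. So it suffices to prove $\Delta(E) \cdot \omega^{d-2} \ge 0$.

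Next, I would reduce to $d = 2$ via the Mehta--Ramanathan restriction theorem: for $m \gg 0$ and a sufficiently general smooth divisor $D \in |m\omega|$, the restriction $E|_D$ is torsion-free and $\mu_{\omega|_D}$-semistable. Iterating $d-2$ times yields a smooth projective surface $S \subset X$, a generic complete intersection of divisors in $|m\omega|$, on which $E|_S$ is torsion-free and $\mu$-semistable. Because $[S] = m^{d-2}\,\omega^{d-2}$ in $H^{\ast}(X,\mathbb{Q})$, one has $\Delta(E|_S) = m^{d-2}\,\Delta(E)\cdot\omega^{d-2}$, so the inequality on $X$ reduces to $\Delta(E|_S) \ge 0$ on the surface.

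For the surface case, I would invoke Gieseker's classical proof. A Jordan--H\"older reduction (using that the discriminant of $E$ decomposes as a rank-weighted sum over the JH graded pieces, plus a nonnegative correction when the slopes coincide) brings us to the case where $E$ is $\mu$-stable. For $\mu$-stable $E$, stability forces $\Hom(E,E) = \mathbb{C}$, while $E^{\vee} \otimes E$ has slope zero and is $\mu$-semistable by tensor-stability in characteristic zero. Controlling $h^0$ and $h^2$ of $E^{\vee} \otimes E \otimes L^{\otimes n}$ for a fixed ample $L$ via semistability bounds and Serre duality, and comparing with the Riemann--Roch expansion whose constant term in $n$ contains $-\Delta(E)$, forces $\Delta(E) \ge 0$.

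The main obstacle is the Mehta--Ramanathan restriction step: preserving $\mu$-semistability under restriction to a generic hyperplane section in $|m\omega|$ for $m \gg 0$ requires a delicate boundedness argument controlling the class of potentially destabilizing subsheaves, and it is essential that $m$ be taken large rather than working with an arbitrary smooth divisor in $|\omega|$. The surface case is the other nontrivial ingredient, requiring either the endomorphism-bundle argument sketched above or Langer's more recent proof via Frobenius pullback reduced from positive characteristic.
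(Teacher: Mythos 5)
The paper does not prove this statement at all: it is quoted as the classical Bogomolov--Gieseker inequality with references to \cite{Bog} and \cite{Gie}, and is only used as a black box (e.g.\ to check that $(Z_{B,\omega},\mathcal{B}_{B,\omega})$ defines a stability condition on a surface). So there is no internal argument to compare against, and your outline is the standard textbook route. Its skeleton is sound: the cancellation of the $B$-field is a correct two-line computation giving $\mathrm{ch}_1^B(E)^2-2\,\mathrm{ch}_0^B(E)\,\mathrm{ch}_2^B(E)=\mathrm{ch}_1(E)^2-2\,\mathrm{ch}_0(E)\,\mathrm{ch}_2(E)$; the reduction to a surface via Mehta--Ramanathan restriction to a general complete intersection in $|m\omega|$ for $m\gg 0$ is the standard one (after replacing $\omega$ by an integral ample class, the case of a general real ample $\omega$ following by density and continuity of both sides in $\omega$); and the Jordan--H\"older reduction to the $\mu_{\omega}$-stable case, using the rank-weighted decomposition of the discriminant together with the Hodge index theorem applied to $\xi=c_1(F)/r_F-c_1(Q)/r_Q$ with $\xi\cdot\omega=0$, is correct.

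The one step that does not close as written is the endgame on the surface. Bounding $h^0$ and $h^2$ of $E^{\vee}\otimes E\otimes L^{\otimes n}$ and comparing with Riemann--Roch cannot isolate the constant term $-\Delta(E)$ of a polynomial in $n$: for $n\gg 0$ the higher cohomology vanishes and $h^0=\chi$ identically, so the sign of a lower-order coefficient produces no contradiction. The classical argument instead takes high \emph{tensor} (or symmetric) powers of $F=E^{\vee}\otimes E$ itself, which remain $\mu$-semistable of slope zero in characteristic zero. Writing $s=\mathrm{rank}(F)=r^2$, one has $c_1(F)=0$ and $\mathrm{ch}_2(F)=-\Delta(E)$, hence $\mathrm{ch}_2(F^{\otimes m})=m\,s^{m-1}\,\mathrm{ch}_2(F)$ and $\chi(F^{\otimes m})=-m\,s^{m-1}\Delta(E)+s^{m}\chi(\mathcal{O}_S)$; restriction to a fixed curve bounds $h^0(F^{\otimes m})$ and, via Serre duality, $h^2(F^{\otimes m})$ by $C\,s^{m}$ with $C$ independent of $m$, so $\Delta(E)<0$ would force $\chi(F^{\otimes m})>h^0+h^2$ for $m\gg 0$, a contradiction. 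With that correction (or by invoking Langer's characteristic-$p$ proof, as you note), your argument is complete and is the expected one behind the cited theorem.
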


\subsection{Double tilting construction for 3-folds}
Suppose that $X$ is a smooth projective 3-fold, and 
$B$, $\omega$ are defined over $\mathbb{Q}$. 
In this case, the central charge $Z_{B, \omega}$ is written as
\begin{align*}
Z_{B, \omega}(E)=-\mathrm{ch}_3^B(E) + \mathrm{ch}_1^B(E) \omega^2/2
+ i\left(\mathrm{ch}_2^B(E) \omega - \mathrm{ch}_0^B(E) \omega^3/6  \right). 
\end{align*}
Contrary to the surface case, the heart $\mathcal{B}_{B, \omega}$
does not fit into a stability condition with central charge $Z_{B, \omega}$. 
In~\cite{BMT}, Bayer, Macri and the author constructed
a further tilting of $\mathcal{B}_{B, \omega}$
in order to give a candidate of $\mathcal{A}_{B, \omega}$ in 
 Conjecture~\ref{conj:Bw}.
The key observation is the following 
lemma, which also relies on Theorem~\ref{thm:BG}:
\begin{lemma}[\cite{BMT}]
For any $0\neq E \in \mathcal{B}_{B, \omega}$, one of the 
following conditions hold: 

(i) 
$\mathrm{ch}_1^B(E)\omega^2 > 0$. 

(ii) 
$\mathrm{ch}_1^B(E)\omega^2 =0$ and 
$\mathrm{Im}Z_{B, \omega}(E)> 0$. 

(iii) 
$\mathrm{ch}_1^B(E)\omega^2=\mathrm{Im}Z_{B, \omega}(E)=0$
and $\mathrm{Re}Z_{B, \omega}(E)<0$. 
\end{lemma}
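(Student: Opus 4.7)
The plan is to show nonnegativity of $\mathrm{ch}_1^B(E)\omega^2$ for every $E \in \mathcal{B}_{B,\omega}$ first, and then in the boundary case reduce successively to the imaginary and real parts of $Z_{B,\omega}$ by combining the classical Bogomolov-Gieseker inequality (Theorem~\ref{thm:BG}) with the Hodge index theorem. Concretely, I would analyze $E$ via the cohomology triangle
\begin{align*}
H^{-1}(E)[1] \to E \to H^0(E)
\end{align*}
with $H^{-1}(E) \in \mathcal{F}_{B,\omega}$ and $H^0(E) \in \mathcal{T}_{B,\omega}$. For a torsion-free $\mu_\omega$-semistable Harder-Narasimhan factor $F$ of $H^0(E)$ the defining inequality $\mu_\omega(F) > B\omega^2$ translates into $\mathrm{ch}_1^B(F)\omega^2 > 0$, while for a torsion factor $\mathrm{ch}_1^B(\cdot)\omega^2 = c_1(\cdot)\omega^2 \ge 0$ by ampleness of $\omega$; symmetrically, each $\mu_\omega$-semistable HN factor of $H^{-1}(E)$ is torsion-free with $\mu_\omega \le B\omega^2$, so after the shift $[1]$ it contributes $\mathrm{ch}_1^B(\cdot)\omega^2 \ge 0$. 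Adding the contributions yields $\mathrm{ch}_1^B(E)\omega^2 \ge 0$, establishing~(i) unless equality holds.

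Under the assumption $\mathrm{ch}_1^B(E)\omega^2 = 0$, each summand above must vanish. This forces $H^0(E)$ to be a torsion sheaf of dimension at most one (a nonzero torsion-free HN summand would give strict inequality, and a two-dimensional torsion summand would contribute $c_1\cdot\omega^2 > 0$), and forces every $\mu_\omega$-HN factor $F_i$ of $H^{-1}(E)$ to satisfy $\mu_\omega(F_i) = B\omega^2$, equivalently $\mathrm{ch}_1^B(F_i)\omega^2 = 0$. Using $\mathrm{ch}_0(H^0(E)) = \mathrm{ch}_1(H^0(E)) = 0$, a direct expansion gives
\begin{align*}
\mathrm{Im}Z_{B,\omega}(E) = \mathrm{ch}_2(H^0(E))\omega - \mathrm{ch}_2^B(H^{-1}(E))\omega + \mathrm{rank}(H^{-1}(E))\omega^3/6.
\end{align*}
The first term is nonnegative and vanishes iff $H^0(E)$ is zero-dimensional; the third term is strictly positive whenever $H^{-1}(E) \neq 0$. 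It therefore remains to show $\mathrm{ch}_2^B(H^{-1}(E))\omega \le 0$.

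This inequality is the technical heart of the argument and the step I expect to be the main obstacle. To each $\mu_\omega$-HN factor $F_i$ I would apply Theorem~\ref{thm:BG} to obtain
\begin{align*}
2\,\mathrm{rank}(F_i)\cdot \mathrm{ch}_2^B(F_i)\omega \le \mathrm{ch}_1^B(F_i)^2 \cdot \omega,
\end{align*}
and then combine with the three-fold Hodge index inequality: for any $\mathbb{R}$-divisor class $D$ with $D\cdot\omega^2 = 0$ one has $D^2\cdot\omega \le 0$. The latter is obtained by restricting to a smooth surface $S \in |n\omega|$ for $n\gg 0$, on which $D|_S \cdot \omega|_S = 0$, and invoking the classical Hodge index theorem on $S$. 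Applied with $D = \mathrm{ch}_1^B(F_i)$, this yields $\mathrm{ch}_2^B(F_i)\omega \le 0$; summing over $i$ gives $\mathrm{ch}_2^B(H^{-1}(E))\omega \le 0$, hence $\mathrm{Im}Z_{B,\omega}(E) \ge 0$, with strict positivity as required for~(ii) unless both $H^{-1}(E) = 0$ and $H^0(E)$ is zero-dimensional. In the remaining case, $E$ itself is a nonzero zero-dimensional sheaf $T$, so $\mathrm{ch}_3(T) = \mathrm{length}(T) > 0$ and $\mathrm{Re}Z_{B,\omega}(E) = -\mathrm{ch}_3(T) < 0$, establishing alternative~(iii).
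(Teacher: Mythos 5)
Your argument is correct, and it follows the route the paper indicates: the survey gives no proof of this lemma, citing \cite{BMT} and noting only that it ``relies on Theorem~\ref{thm:BG},'' and the proof in \cite{BMT} proceeds exactly as you do --- reduce to the Harder--Narasimhan factors of $H^{-1}(E)$ and $H^{0}(E)$, get $\mathrm{ch}_1^B(E)\omega^2\ge 0$ from the definition of the torsion pair, and in the degenerate case combine the classical Bogomolov--Gieseker inequality with the Hodge index theorem (via restriction to a surface in $|n\omega|$) to force $\mathrm{ch}_2^B(H^{-1}(E))\omega\le 0$, leaving only a zero-dimensional sheaf in case (iii). No gaps; your identification of the $\mathrm{ch}_2^B\omega\le 0$ step as the technical heart is accurate.
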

The above lemma indicates that the vector 
$(\mathrm{ch}_1^B(E)\omega^2, \mathrm{Im}Z_{B, \omega}(E), 
-\mathrm{Re}Z_{B, \omega}(E))$
behaves as if it were $(\mathrm{rank}, c_1, \mathrm{ch}_2)$
on coherent sheaves on surfaces. 
In~\cite{BMT}, this observation 
led to the following slope function on $\mathcal{B}_{B, \omega}$:
\begin{align*}
\nu_{B, \omega}(E)=\frac{\mathrm{Im}Z_{B, \omega}(E)}{\mathrm{ch}_1^B(E)\omega^2} \in \mathbb{Q} \cup \{\infty\}. 
\end{align*}
Here $\nu_{B, \omega}(E)=\infty$ if $\mathrm{ch}_1^B(E) \omega^2=0$. 
The above lemma shows that $\nu_{B, \omega}$
satisfies the weak see-saw property, and 
it defines a slope stability condition on $\mathcal{B}_{B, \omega}$. 
In~\cite{BMT}, it was called \textit{tilt-stability}:
\begin{definition}
An object $E \in \mathcal{B}_{B, \omega}$ is tilt (semi)stable 
if for any subobject $0\neq F \subsetneq E$, we have
$\nu_{B, \omega}(F)<(\le) \nu_{B, \omega}(E/F)$. 
\end{definition}
We can show the existence of Harder-Narasimhan filtrations 
with respect to the tilt stability. 
Similarly to the surface case, the pair of subcategories 
$(\mathcal{T}_{B, \omega}', \mathcal{F}_{B, \omega}')$
of $\mathcal{B}_{B, \omega}$ defined to be
\begin{align*}
&\mathcal{T}_{B, \omega}'
=\langle E \in \mathcal{B}_{B, \omega}: 
E \mbox{ is tilt semistable with }
\nu_{B, \omega}(E)>0 \rangle \\
&\mathcal{F}_{B, \omega}'
=\langle E \in \mathcal{B}_{B, \omega}: 
E \mbox{ is tilt semistable with }
\nu_{B, \omega}(E)\le 0 \rangle
\end{align*}
is a torsion pair. By tilting, we have another heart
\begin{align*}
\mathcal{A}_{B, \omega} = \langle \mathcal{F}_{B, \omega}'[1], 
\mathcal{T}_{B, \omega}' \rangle \subset D^b \mathrm{Coh}(X). 
\end{align*}
By the construction, we have $\mathrm{Im}Z_{B, \omega}(E)\ge 0$
for any $E \in \mathcal{A}_{B, \omega}$. 
In~\cite{BMT}, we proposed the following conjecture: 
\begin{conjecture}[\cite{BMT}]
If $\dim X=3$, we have 
$(Z_{B, \omega}, \mathcal{A}_{B, \omega}) \in \mathrm{Stab}(X)$. 
\end{conjecture}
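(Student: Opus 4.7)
The plan is to verify the three defining conditions for $(Z_{B,\omega},\mathcal{A}_{B,\omega})$ to lie in $\mathrm{Stab}(X)$: the positivity condition (\ref{Z}), the Harder-Narasimhan property, and the support property. The positivity splits into two parts. Non-negativity of the imaginary part, namely $\mathrm{Im}\,Z_{B,\omega}(E)\ge 0$ for every $0\neq E\in\mathcal{A}_{B,\omega}$, is automatic from the construction: by definition $\mathcal{A}_{B,\omega}=\langle\mathcal{F}_{B,\omega}'[1],\mathcal{T}_{B,\omega}'\rangle$, and on both $\mathcal{T}_{B,\omega}'$ and $\mathcal{F}_{B,\omega}'[1]$ the quantity $\mathrm{ch}_1^B(\cdot)\,\omega^2$ is non-negative, and when it vanishes the refined slope $\nu_{B,\omega}$ forces $\mathrm{Im}\,Z_{B,\omega}$ into the desired half-plane.

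The core difficulty is the second part of the positivity: when $\mathrm{Im}\,Z_{B,\omega}(E)=0$, one must establish $\mathrm{Re}\,Z_{B,\omega}(E)<0$. I would first reduce this to a statement about tilt-semistable objects. If $E\in\mathcal{A}_{B,\omega}$ has $\mathrm{Im}\,Z_{B,\omega}(E)=0$, then in its decomposition by the $t$-structure on $\mathcal{B}_{B,\omega}$ the only contributions come from tilt-semistable factors with $\nu_{B,\omega}=\infty$ or $\nu_{B,\omega}=0$. The $\infty$-slope pieces are supported in dimension $\le 1$, and for these $\mathrm{Re}\,Z_{B,\omega}=-\mathrm{ch}_3^B$ is negative by an elementary computation (any pure-dimensional purely torsion object in $\mathcal{B}_{B,\omega}$ with vanishing $\mathrm{ch}_1^B\omega^2$ and $\mathrm{ch}_2^B\omega$ must, after Jordan-Hölder in tilt stability, lie in the shifted part and contribute negatively to $-\mathrm{ch}_3^B$). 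So the problem concentrates on tilt-semistable objects $F$ with $\nu_{B,\omega}(F)=0$.

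For such $F$ one needs
\[
\mathrm{ch}_3^B(F) < \frac{1}{6}\,\mathrm{ch}_1^B(F)\,\omega^{2},
\]
or, more symmetrically, the conjectural strong Bogomolov-Gieseker type inequality on the third Chern character as formulated in \cite{BMT}. I would therefore propose to adopt this $\mathrm{ch}_3$-inequality as a hypothesis, prove it for tilt-semistable objects with $\nu_{B,\omega}=0$ by running a Harder-Narasimhan reduction inside $\mathcal{B}_{B,\omega}$, and then conclude the positivity. This is precisely the step that, as pointed out in the introduction, requires a new idea; indeed, the surface Bogomolov-Gieseker inequality of Theorem~\ref{thm:BG} only controls $\mathrm{ch}_2$, and there is no classical input bounding $\mathrm{ch}_3$ for complexes in $\mathcal{B}_{B,\omega}$.

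Once the central-charge positivity is in place, the Harder-Narasimhan property follows from a standard noetherian/artinian argument: one shows that $\mathcal{A}_{B,\omega}$ is noetherian (using that $B,\omega$ are rational, so $\mathrm{Im}\,Z_{B,\omega}$ takes values in a discrete subset of $\mathbb{R}_{\ge 0}$, preventing infinite descending chains of subobjects with strictly decreasing phase), and then appeals to the general existence criterion for HN filtrations in the presence of a discrete stability function, as in \cite{Brs1}. Finally, the support property is verified by comparing $\|\mathrm{ch}(E)\|$ with $|Z_{B,\omega}(E)|$ on each HN factor; this reduces, via the BG-type inequality again, to a bound on the Chern character of tilt-semistable objects with controlled slope. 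The main obstacle, as emphasized above, is the $\mathrm{ch}_3$-inequality, and every other step in the plan is essentially formal once that input is granted.
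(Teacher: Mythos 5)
This statement is an open conjecture, not a theorem: the paper offers no proof, and explicitly says that the existence problem for Bridgeland stability conditions on projective 3-folds is \emph{reduced} to the conjectural Bogomolov--Gieseker type inequality (Conjecture~\ref{conj:BMT}), whose proof ``seems to require a new idea.'' Your proposal does not close this gap either --- you yourself adopt the $\mathrm{ch}_3$-inequality ``as a hypothesis'' and acknowledge it is the step requiring a new idea. So what you have written is a (correct in outline) reduction of the conjecture to another open conjecture, which is essentially the content of \cite{BMT} as summarized in the paper; it is not a proof, and should not be presented as one. The phrase ``prove it for tilt-semistable objects with $\nu_{B,\omega}=0$ by running a Harder--Narasimhan reduction inside $\mathcal{B}_{B,\omega}$'' is empty as stated: an HN reduction only reduces the general case to the semistable case, and for semistable objects no classical input (in particular not Theorem~\ref{thm:BG}, which only controls $\mathrm{ch}_2$) yields the needed bound on $\mathrm{ch}_3$.

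Two further inaccuracies. First, the inequality actually needed for the positivity condition (\ref{Z}) is $\mathrm{ch}_3^B(E) < \mathrm{ch}_1^B(E)\,\omega^2/2$, not $\mathrm{ch}_1^B(E)\,\omega^2/6$; the constant $1/18$ in Conjecture~\ref{conj:BMT} is a separate, stronger normalization motivated by objects with zero discriminant. Second, your closing claim that ``every other step in the plan is essentially formal once that input is granted'' contradicts the paper's remark: granting the weak inequality, the existence of HN filtrations is indeed proved in \cite{BMT}, but the \emph{support property remains open}. So even modulo the $\mathrm{ch}_3$-inequality, your argument for membership in $\mathrm{Stab}(X)$ (which by definition requires the support property) is incomplete.
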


\subsection{Conjectural BG inequality for 3-folds}
Our double tilting 
construction led to a 
BG type inequality conjecture 
evaluating the third Chern characters
of tilt semistable objects. 
\begin{conjecture}[\cite{BMT}]\label{conj:BMT}
Let $X$ be a smooth projective 3-fold. 
Then for any tilt semistable object 
$E \in \mathcal{B}_{B, \omega}$ with 
$\nu_{B, \omega}(E)=0$, i.e. 
$\mathrm{ch}_2^B(E)\omega=\mathrm{ch}_0^B(E) \omega^3/6$, 
we have the inequality
\begin{align*}
\mathrm{ch}_3^B(E) \le \frac{1}{18} \mathrm{ch}_1^B(E) \omega^2. 
\end{align*}
\end{conjecture}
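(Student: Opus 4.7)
The plan is to proceed in three stages. First, since both $\mathrm{ch}_3^B(E)$ and $\mathrm{ch}_1^B(E) \cdot \omega^2$ are additive in $E$ and the locus $\nu_{B,\omega} = 0$ is preserved under extensions of tilt-semistable objects of the same $\nu$-phase, a Jordan-H\"older reduction with respect to tilt-stability lets one assume that $E$ is tilt-stable with $\nu_{B,\omega}(E)=0$.

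Second, for tilt-stable $E$ I would attempt a wall-crossing argument: deform $(B, \omega)$ continuously inside the product of $H^2(X, \mathbb{Q})$ with the ample cone, using that the walls at which $E$ becomes strictly tilt-semistable form a locally finite collection. The aim is to reach a chamber in which $E$ is either a twist of a $\mu_{\omega}$-stable torsion-free sheaf, a shift of such a sheaf, or a pure two-dimensional sheaf, and verify the inequality directly in each regime. Applying the classical BG inequality (Theorem~\ref{thm:BG}) to the underlying cohomology sheaves of $E$, together with the defining constraint $\mathrm{ch}_2^B(E) \cdot \omega = \mathrm{ch}_0^B(E) \omega^3 / 6$, produces quadratic bounds on $\mathrm{ch}_0^B$, $\mathrm{ch}_1^B \cdot \omega^2$, and $\mathrm{ch}_2^B \cdot \omega$. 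The anchor cases, namely line bundles, ideal sheaves of points and curves, and structure sheaves of smooth divisors, can be checked by Riemann-Roch computations and should seed any inductive argument.

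The main obstacle, and the reason the conjecture remains open, is that Theorem~\ref{thm:BG} is quadratic in Chern classes and furnishes no control whatsoever over $\mathrm{ch}_3^B$, whereas the desired inequality is genuinely cubic. Neither the Jordan-H\"older reduction nor the wall-crossing step introduces new cubic information; they only redistribute it among subobjects. A new ingredient seems necessary, for instance a Kobayashi-Hitchin type characterization of tilt-stable complexes from which a curvature integral would yield the cubic bound, or a restriction theorem reducing the 3-fold case to a surface case via a generic hyperplane section (the naive restriction fails because tilt-stability is not preserved under pullback to a divisor). Absent such an input, one can realistically only aim to establish the conjecture on special classes of 3-folds, such as $\mathbb{P}^3$, low-Picard-rank Fanos, and abelian 3-folds, where extra geometry supplies the missing estimate.
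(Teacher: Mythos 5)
The statement you are asked to prove is Conjecture~\ref{conj:BMT}, which the paper presents as an \emph{open conjecture}: there is no proof of it in the paper, and the surrounding text explicitly says it ``seems to be a hard problem'' even in concrete examples, with proofs known only for $\mathbb{P}^3$ (Macri), the quadric 3-fold (Schmidt), and certain abelian 3-folds (Maciocia--Piyaratne). So there is no paper proof to compare against, and your proposal --- by your own admission in its final paragraph --- does not constitute a proof either. Your diagnosis of the obstruction is essentially the correct one and matches the paper's framing: the classical Bogomolov--Gieseker inequality (Theorem~\ref{thm:BG}) is quadratic in the Chern character and gives no handle on $\mathrm{ch}_3^B$, whereas the conjectured bound is a genuinely new cubic constraint. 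Neither the Jordan--H\"older reduction (which is sound: $\mathrm{ch}_3^B$ and $\mathrm{ch}_1^B\omega^2$ are additive and all tilt-stable factors of a tilt-semistable $E$ with $\nu_{B,\omega}(E)=0$ again satisfy $\nu_{B,\omega}=0$) nor locally finite wall-crossing in $(B,\omega)$ manufactures that missing cubic input; they only redistribute the problem among subobjects or among chambers.

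One caution on the second stage of your plan: it is not true in general that wall-crossing drives an arbitrary tilt-stable object with $\nu_{B,\omega}=0$ into a chamber where it becomes a (shift of a) $\mu_\omega$-stable sheaf or a pure two-dimensional sheaf that can be handled by Riemann--Roch; controlling the large-volume or small-$\omega$ limits of tilt-stability is itself delicate, and the anchor cases you list (line bundles, ideal sheaves, divisors) are exactly the cases where the inequality is already known to be sharp or verifiable but do not seed an induction covering all tilt-stable objects. The special-case proofs cited in the paper do not follow your outline: they rely on extra geometric structure (the exceptional collection and Euler sequence on $\mathbb{P}^3$, Fourier--Mukai transforms on abelian 3-folds) precisely to supply the cubic estimate you correctly identify as missing. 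In short, your proposal is an honest and accurate assessment of why the conjecture is open rather than a proof of it, and it should be presented as such.
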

\begin{remark}
In order to show $(Z_{B, \omega}, \mathcal{A}_{B, \omega})$
satisfies the property (\ref{Z}), 
it is enough to show the weaker inequality 
$\mathrm{ch}_3^B(E) <\mathrm{ch}_1^B(E) \omega^2/2$. 
If this is true, the existence of HN filtrations is proved in~\cite{BMT}, 
while the support property remains open.
The stronger bound in Conjecture~\ref{conj:BMT}
was obtained by the requirement that 
the equality is achieved for tilt semistable objects
with zero discriminant. 
\end{remark}
It seems to be a hard problem to show Conjecture~\ref{conj:BMT}
even in concrete examples. 
So far, it is proved when $X=\mathbb{P}^3$ by Macri~\cite{MaBo}, $X$ is a 
quadric 3-fold by Schmidt~\cite{BSch}, 
and $X$ is a principally polarized abelian 
3-fold with Picard rank one by Maciocia-Piyaratne~\cite{MaPi1}, \cite{MaPi2}. 
Another kind of evidence is that assuming Conjecture~\ref{conj:BMT}
implies some open problems in other research fields. 
  In~\cite{BBMT}, it was proved that 
Conjecture~\ref{conj:BMT} implies (almost) Fujita conjecture 
for 3-folds: for any polarized 3-fold $(X, L)$, 
$K_X + 4L$ is free and $K_X + 6L$ is very ample. 
In~\cite{TodBG}, it was also proved that 
Conjecture~\ref{conj:BMT} implies 
a conjectural relationship between two kinds of 
DT type invariants
inspired by string theory. 
This result will be reviewed in Theorem~\ref{thm:DM}. 
It may be worth pointing out that, 
in both of the above applications, 
assuming a
weaker inequality, say 
$\mathrm{ch}_3^B(E) <\mathrm{ch}_1^B(E) \omega^2/2$,
does not imply anything. 
The stronger evaluation in Conjecture~\ref{conj:BMT} is crucial
for the proofs of the applications. 

\subsection{The space of weak stability conditions}
Although the existence of Bridgeland stability conditions on projective 
Calabi-Yau 3-folds remains open, 
we are able to modify the definition of Bridgeland stability conditions so that the story in Subsection~\ref{subsec:new} works. The notion of \textit{weak stability conditions} in~\cite{Tcurve1} is one of them. 
In the situation of Subsection~\ref{subsec:defi}, 
we further fix a filtration
$0 \subsetneq \Gamma_0 \subsetneq \cdots \subsetneq \Gamma_N=\Gamma$
such that each subquotient $\Gamma_j/\Gamma_{j-1}$ is a free abelian group. 
Instead of considering a group homomorphism $Z \colon \Gamma \to \mathbb{C}$, 
we consider an element
\begin{align}\label{Zweak}
Z=\{Z_i\}_{j=0}^{N} \in \prod_{j=0}^{N}
\Hom(\Gamma_j/\Gamma_{j-1}, \mathbb{C}). 
\end{align}
Given an element (\ref{Zweak}), we set $Z(v) \in \mathbb{C}$
for $v \in \Gamma$ as follows: 
there is a unique $0\le m\le N$
such that $v \in \Gamma_m \setminus \Gamma_{m-1}$, 
where $\Gamma_{-1} =\emptyset$. 
Then $Z(v)$ is defined to be $Z_m([v]) \in \mathbb{C}$
where $[v]$ is the class of $v$ in $\Gamma_m/\Gamma_{m-1}$. 
\begin{definition}
A weak stability condition on $\mathcal{D}$ with 
respect to the filtration $\Gamma_{\bullet}$ is 
data $(Z, \{\mathcal{P}(\phi)\}_{\phi \in \mathbb{R}})$, 
where $Z$ is as in (\ref{Zweak}), 
$\mathcal{P}(\phi) \subset \mathcal{D}$ 
is a full subcategory, satisfying the same 
axiom in Definition~\ref{def:Bst}. 
\end{definition}
Similarly to Lemma~\ref{lem:tst}, giving a weak stability 
condition is equivalent to giving 
$(Z, \mathcal{A})$, where $Z$ is as in (\ref{Zweak}), 
$\mathcal{A} \subset \mathcal{D}$ is the heart of a bounded t-structure, 
satisfying the same conditions in Lemma~\ref{lem:tst}. 
We denote by 
$\mathrm{Stab}_{\Gamma_{\bullet}}(\mathcal{D})$
the set of weak stability conditions on $\mathcal{D}$
with respect to $\Gamma_{\bullet}$ with a support property. 
This set also has a structure of a complex manifold, 
and coincides with $\mathrm{Stab}_{\Gamma}(\mathcal{D})$
if $N=0$, i.e. the filtration $\Gamma_{\bullet}$ is trivial. 
In general, it is easier to show the non-emptiness for the 
space $\mathrm{Stab}_{\Gamma_{\bullet}}(\mathcal{D})$
with a non-trivial filtration $\Gamma_{\bullet}$. 
The result of Theorem~\ref{thm:main} was obtained by the wall-crossing 
formula in the space of weak stability conditions on the 
following triangulated category
 \begin{align*}
\mathcal{D}_X =\langle \mathcal{O}_X, \mathrm{Coh}_{\le 1}(X) \rangle_{\rm{tr}}
\subset D^b \mathrm{Coh}(X). 
\end{align*}
Here $\mathrm{Coh}_{\le 1}(X)$ is the category of one or zero dimensional
sheaves on $X$, and $\langle \ast \rangle_{\rm{tr}}$ is the 
triangulated closure. 
The relevant data is 
\begin{align*}
\Gamma_0=\mathbb{Z} \oplus H_2(X, \mathbb{Z}) \oplus \{0\} 
\subset \Gamma_1=\Gamma=\mathbb{Z} \oplus H_2(X, \mathbb{Z}) \oplus \mathbb{Z}
\end{align*}
with the map $\mathrm{cl}$ given by 
$\mathrm{cl}(E)=(\mathrm{ch}_3(E), \mathrm{ch}_2(E), \mathrm{ch}_0(E))$. 
Here $H_2(X, \mathbb{Q})$ is identified with $H^4(X, \mathbb{Q})$
via Poincar\'e duality. 
The result of Theorem~\ref{thm:main}
is proved along with the wall-crossing 
argument of Subsection~\ref{subsec:new}
with respect to 
the one parameter family of weak stability conditions on $\mathcal{D}_X$
\begin{align*}
\sigma_{\theta}=
(Z_{\omega, \theta}, \mathcal{A}_X) \in \mathrm{Stab}_{\Gamma_{\bullet}}(\mathcal{D}_X), \quad 1/2 \le \theta <1
\end{align*}
from $\theta=1/2$ to $\theta \to 0$. 
Here $\omega$ is an ample divisor on $X$, 
$Z_{\omega, \theta, j}$ are given by 
\begin{align*}
Z_{\omega, \theta, 0} \colon 
\Gamma_0 \ni (n, \beta) \mapsto n-(\omega \cdot \beta)i, \quad 
Z_{\omega, \theta, 1}:
\mathbb{Z} \ni r \mapsto r \exp(i\pi \theta).
\end{align*}
The heart $\mathcal{A}_X \subset \mathcal{D}_X$
is obtained as the extension closure of objects
$\mathcal{O}_X$ and $\mathrm{Coh}_{\le 1}(X)[-1]$. 
We are able to construct DT type invariant
\begin{align*}
\mathrm{DT}_{\sigma_{\theta}}(1, 0, -\beta, -n) \in \mathbb{Q}
\end{align*}
which counts $\sigma_{\theta}$-semistable 
objects $E \in \mathcal{A}_X$
with $\mathrm{ch}(E)=(1, 0, -\beta, -n)$. 
It is shown that 
\begin{align*}
\mathrm{DT}_{\theta \to 1}(1, 0, -\beta, -n)=P_{n, \beta}, \quad 
\mathrm{DT}_{\theta =1/2}(1, 0, -\beta, -n)=L_{n, \beta} 
\end{align*}
where $L_{n, \beta}$ is the invariant in 
Theorem~\ref{thm:main}. 
The wall-crossing formula describes the difference 
between $P_{n, \beta}$ and $L_{n, \beta}$. 
A similar wall-crossing phenomena
also implies the relationship between $I_{n, \beta}$
and $P_{n, \beta}$ in Theorem~\ref{thm:DTPT}. 
Combined them, we obtain the result of Theorem~\ref{thm:main}. 
Some more detail is also available in~\cite{Tsurvey}. 

\section{Further results and conjectures}\label{sec:Con}
\subsection{Multiple cover formula conjecture}
Although Conjecture~\ref{MNOP}
(i) is proved, a stronger version of the 
rationality conjecture remains open. 
It was proposed by
Pandharipande-Thomas~\cite{PT}, and predicts the product 
expansion (called \textit{Gopakumar-Vafa form})
of the generating series of PT invariants:
\begin{align*}
&1+ \sum_{n \in \mathbb{Z}, \beta>0}
P_{n, \beta}q^n t^{\beta} \\
&=
\prod_{\beta>0} \left(\prod_{j=1}^{\infty}(1-(-q)^j t^{\beta})^{jn_{0}^{\beta}}
\prod_{g=1}^{\infty}\prod_{k=0}^{2g-2}(1-(-q)^{g-1}t^{\beta})^{(-1)^{k+g}n_{g}^{\beta}\left(\begin{subarray}{c}
2g-2 \\ k
\end{subarray}\right)}  \right)
\end{align*}
for some $n_{g}^{\beta} \in \mathbb{Z}$. 
Using Theorem~\ref{thm:main} and Theorem~\ref{thm:DTPT}, 
the above strong rationality conjecture is proved in~\cite{Tsurvey}
to be equivalent to the following conjecture: 
\begin{conjecture}[\cite{JS}, \cite{Tsurvey}]\label{conj:mult}
We have the following identity:
\begin{align*}
N_{n, \beta}=\sum_{k\in \mathbb{Z}_{\ge 1}, k|(n, \beta)}
\frac{1}{k^2}N_{1, \beta/k}.
\end{align*}
\end{conjecture}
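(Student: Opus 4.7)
The plan is to attack Conjecture~\ref{conj:mult} via a wall-crossing argument in the motivic Hall algebra, exploiting the geometry of one-dimensional semistable sheaves on $X$ together with one of two symmetry inputs: the derived dual autequivalence of $D^b \mathrm{Coh}(X)$, or the GW/DT correspondence (Theorem~\ref{thm:PP}). The weights $1/k^2$ on the right hand side are the hallmark of a BPS-type multiple cover contribution, so the task is to identify a precise mechanism in which every strictly semistable Jordan-H\"older filtration contributes this weight to $N_{n,\beta}$ and nothing else remains.

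First, I would reformulate the conjecture in Hall-algebraic language. In the motivic Hall algebra $H(\mathrm{Coh}_{\le 1}(X))$, let $\epsilon_{n,\beta}$ be the logarithmic element whose weighted Euler characteristic, paired with the Behrend function, produces $N_{n,\beta}$. Define candidate primitive invariants by M\"obius inversion, $n_{\beta}^{\mathrm{BPS}} = \sum_{k \mid \beta}\frac{\mu(k)}{k^{2}}N_{1,\beta/k}$, so that Conjecture~\ref{conj:mult} becomes the statement that $\epsilon_{n,\beta}$ is expressible as a sum over $k\mid (n,\beta)$ of rescaled "primitive" classes in weight $1/k^{2}$. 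The Joyce-Song / Kontsevich-Soibelman wall-crossing formula will then produce candidate identities between $N_{n,\beta}$ and polynomial expressions in the $N_{1,\beta/k}$, with the Euler pairing on one-dimensional sheaves contributing the desired quadratic denominators once one works on an appropriate slice of a space $\mathrm{Stab}_{\Gamma_{\bullet}}(\mathcal{D}_X)$ of weak stability conditions.

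Second, I would set up the wall-crossing in a one-parameter family of weak stability conditions on $\mathcal{D}_X$, analogous to the family $\sigma_\theta$ appearing in the proof of Theorem~\ref{thm:main}, but enlarged so as to degenerate $H$-Gieseker stability on $\mathrm{Coh}_{\le 1}(X)$ to a limit in which only primitive classes (those with $\gcd$ equal to one) support semistable objects. Along each wall, the slope becomes degenerate on a sublattice, and the Joyce-Song formula identifies the jump in $N_{n,\beta}$ with a symmetric product of invariants attached to the proper divisors of $(n,\beta)$. Running the degeneration all the way and invoking the periodicity $N_{n,\beta}=N_{n+H\beta,\beta}$ of Theorem~\ref{thm:main} should then pin down the closed form asserted in the conjecture.

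The main obstacle will be isolating the contribution of a single primitive state, i.e. showing that the total wall-crossing output collapses exactly to the divisor sum $\sum_{k\mid(n,\beta)}k^{-2}N_{1,\beta/k}$ without any residual higher-order term. Equivalently, one must prove integrality of the resulting $n_{\beta}^{\mathrm{BPS}}$ --- precisely the assertion that, via the equivalence stated in the excerpt, matches the strong rationality / Gopakumar-Vafa form of the PT series of $X$. I expect that a proof in full generality will require either a genuine cohomological or motivic categorification of $N_{n,\beta}$ (making the multiple cover structure visible at the level of perverse sheaves on the moduli stack), or, in cases covered by Theorem~\ref{thm:PP}, the transport of integrality from the GW side through Theorem~\ref{thm:DTPT}; both avenues are delicate and lie at the current boundary of the theory.
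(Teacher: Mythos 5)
The statement you are addressing is presented in the paper as an \emph{open conjecture}: Section 4.1 explicitly says that the strong rationality (Gopakumar--Vafa) form of the PT series remains open, and that Conjecture~\ref{conj:mult} is shown in \cite{Tsurvey} only to be \emph{equivalent} to it. There is therefore no proof in the paper to compare against, and your text is, by your own admission in its final paragraph, a research program rather than a proof. It correctly identifies the natural framework --- the motivic Hall algebra of $\mathrm{Coh}_{\le 1}(X)$, the Joyce--Song/Kontsevich--Soibelman machinery, and M\"obius inversion to define candidate BPS numbers --- but it does not close the argument.

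The concrete gap is in your second and third steps. For one-dimensional sheaves the relevant slope is $n/(H\cdot\beta)$, and a direct sum of $k$ copies of a stable sheaf of class $(\beta/k,n/k)$ is semistable of class $(\beta,n)$ for \emph{every} choice of polarization or weak stability parameter, since it has the same slope; there is no degeneration of stability in which ``only primitive classes support semistable objects,'' so the limit you propose to run to does not exist. The symmetries $N_{n,\beta}=N_{-n,\beta}=N_{n+H\beta,\beta}$ of Theorem~\ref{thm:main} come from autoequivalences (derived dual and tensoring by $\mathcal{O}_X(H)$), not from wall-crossing, and while they can sometimes move $n$ to $1$, they cannot produce the $1/k^2$ divisor sum for non-primitive $(n,\beta)$: that sum is precisely the assertion that the strictly semistable locus contributes a universal multiple-cover series, and the wall-crossing formula is a formal identity relating invariants on two sides of a wall on both of which the same non-primitive semistable objects persist. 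Pinning down the $1/k^2$ structure requires genuinely new input --- local control of the moduli stack of semistable one-dimensional sheaves (e.g.\ a perverse-sheaf or motivic categorification) or transport of integrality from the Gromov--Witten side --- which is exactly why the statement is still a conjecture. Your proposal is an accurate map of where the difficulty lies, but it does not constitute a proof.
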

The invariant $N_{1, \beta}$ is always integer, 
and the above conjecture is stronger then the 
integrality conjecture by Kontsevich-Soibelman~\cite{K-S}.

\subsection{Gepner type stability conditions}
Let $W \in A=\mathbb{C}[x_1, \cdots, x_n]$ be
a homogeneous polynomial of degree $d$. 
By definition, a graded matrix factorization 
consists of data
\begin{align*}
P^0 \stackrel{p^0}{\to} P^1 
\stackrel{p^1}{\to} P^0(d)
\end{align*}
where $P^i$ are graded free $A$-modules of finite rank, 
$p^i$ are homomorphisms of graded $A$-modules,
$P^i \mapsto P^i(1)$ is the shift of the grading,
satisfying $p^1 \circ p^0 = p^0 \circ p^1 = \cdot W$. 
The triangulated category $\mathrm{HMF}(W)$ is defined to be
the homotopy category of graded matrix factorizations of 
$W$. It has a structure of a triangulated category, 
and related to $D^b \mathrm{Coh}(X)$
for the hypersurface $X=(W=0) \subset \mathbb{P}^{n-1}$. 
For instance if $d=n$, there is an equivalence~\cite{Orsin}
\begin{align}\label{Orlov}
\mathrm{HMF}(W) \stackrel{\sim}{\to} D^b \mathrm{Coh}(X). 
\end{align}
As an analogy of Gieseker stability on 
$\mathrm{Coh}(X)$, we expect the existence
of a natural stability condition on $\mathrm{HMF}(W)$. 
Based on the earlier works~\cite{Wal}, \cite{KST1}, 
the following conjecture is proposed in~\cite{TGep}:
\begin{conjecture}\label{conj:Gep}
There is a Bridgeland stability condition
$\sigma_G=(Z_G, \{\mathcal{P}_G(\phi)\}_{\phi \in \mathbb{R}})$
on $\mathrm{HMF}(W)$
whose central charge $Z_G$ is given by 
\begin{align*}
Z_G \left( \bigoplus_{i=1}^{N} A(m_i) \rightleftarrows \bigoplus_{i=1}^{N}
A(n_i) \right)
= \sum_{i=1}^{N} \left( e^{\frac{2\pi m_i \sqrt{-1}}{d}} - 
e^{\frac{2\pi n_i \sqrt{-1}}{d}}
 \right)
\end{align*}
and the set of semistable objects satisfy $\tau \mathcal{P}_G(\phi)
=\mathcal{P}_G(\phi +2/d)$, 
where $\tau$ is the graded shift functor 
$P^{\bullet} \mapsto P^{\bullet}(1)$. 
\end{conjecture}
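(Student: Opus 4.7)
I would split the conjecture into two tasks: (a) producing some Bridgeland stability condition $\sigma_G$ on $\mathrm{HMF}(W)$ with central charge $Z_G$, and (b) arranging that the graded-shift autoequivalence $\tau$ rotates $\sigma_G$ by $2/d$ under the right $\mathbb{C}$-action, which is equivalent to the claimed identity $\tau \mathcal{P}_G(\phi) = \mathcal{P}_G(\phi + 2/d)$. The approach to (b) rests on a direct computation on the generators $A(m)$: since $\tau A(m) = A(m+1)$, one has $Z_G \circ \tau_{\ast} = e^{2\pi\sqrt{-1}/d} Z_G$, which matches the central charge of $\sigma_G \cdot (2/d)$, so the symmetry on the subcategories $\mathcal{P}_G(\phi)$ will follow once (a) is produced in a sufficiently canonical form.

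For (a), my primary strategy is to exploit Orlov's equivalence (\ref{Orlov}) in the Calabi-Yau range $d = n$, transporting the problem onto the hypersurface $X = (W=0) \subset \mathbb{P}^{n-1}$. There the goal is a Bridgeland stability condition whose central charge agrees with $Z_G$ on the classes $[\mathcal{O}_X(m)]$; since these span the numerical Grothendieck group of a smooth hypersurface, the target central charge is pinned down. For $\dim X \le 2$ I would specialize the tilted heart $(Z_{B, \omega}, \mathcal{B}_{B, \omega})$ to appropriate real parameters $(B, \omega)$ and match $Z_G$ up to a complex rescaling; for $\dim X = 3$ (the quintic case) I would use the BMT double-tilt $(Z_{B, \omega}, \mathcal{A}_{B, \omega})$ and specialize to the distinguished boundary point that Conjecture~\ref{conj:quintic} predicts to correspond to the Gepner point. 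To secure (b) simultaneously, I would try to realize the resulting heart in a $\tau$-invariant form: a natural candidate directly inside $\mathrm{HMF}(W)$ is the extension closure of $d$ consecutive shifts $A(m), A(m+1), \ldots, A(m+d-1)$ together with their syzygies, which is manifestly stable under $\tau$ by inspection. One would then need to verify the Harder-Narasimhan property and the support property for the pair $(Z_G, \mathcal{A})$.

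The principal obstacle is existence in the projective Calabi-Yau three-fold case, where even the construction of any point of $\mathrm{Stab}(X)$ reduces to the still-open Conjecture~\ref{conj:BMT}, and where the Gepner point apparently lies on the boundary of the BMT parameter slice, so that even granting that conjecture an additional extension-to-the-boundary argument would be required. A secondary difficulty is the non-Calabi-Yau range $d \neq n$, in which Orlov's equivalence is unavailable and one must build stability conditions directly on $\mathrm{HMF}(W)$; I would treat the low-degree examples first, where indecomposable matrix factorizations admit explicit classifications, as test cases before attempting general $W$.
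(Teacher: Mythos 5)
The first point to make is that the statement you are addressing is an open conjecture: the paper offers no proof of Conjecture~\ref{conj:Gep}, only pointers to partial evidence in the cited works, and it explicitly flags that even the non-emptiness of $\mathrm{Stab}(X)$ for a projective Calabi--Yau 3-fold is unknown. So there is nothing in the paper to compare your argument against, and your text is a research plan rather than a proof. As a plan it gets the easy half right: the computation $Z_G\circ\tau_{\ast}=e^{2\pi\sqrt{-1}/d}Z_G$ is correct and shows that the central charge is compatible with the required symmetry $\tau\mathcal{P}_G(\phi)=\mathcal{P}_G(\phi+2/d)$. But the hard half --- existence --- is not advanced: for $n=d=5$ you reduce the conjecture to Conjecture~\ref{conj:BMT}, which is itself open, plus an unspecified ``extension to the boundary,'' and the author's work cited around Conjecture~\ref{conj:quintic} indicates that reaching the Gepner point actually requires a Bogomolov--Gieseker type inequality \emph{stronger} than Conjecture~\ref{conj:BMT}, so even that reduction is optimistic.

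There is also a concrete error. A heart that is literally $\tau$-invariant is incompatible with the conjecture: if $\mathcal{A}$ were preserved by $\tau$ while $Z_G\circ\tau_{\ast}=e^{2\pi\sqrt{-1}/d}Z_G$, then a semistable object $E\in\mathcal{A}$ of phase $\phi\in(1-2/d,1]$ would give $\tau E\in\mathcal{A}$ with $Z_G(\tau E)\in\mathbb{R}_{>0}\exp(i\pi(\phi+2/d))$ and $\phi+2/d>1$, violating the positivity condition (\ref{Z}). The relation $\tau\mathcal{P}_G(\phi)=\mathcal{P}_G(\phi+2/d)$ forces $\tau$ to move the heart (it is equivariance up to the $\mathbb{C}$-action, not invariance), so your candidate ``extension closure of $A(m),\dots,A(m+d-1)$'' is aimed at the wrong target; moreover a rank-one graded free module $A(m)$ by itself is not an object of $\mathrm{HMF}(W)$ (one cannot factor $W$ through the zero module), so the generators would in any case have to be replaced by stabilized residue fields or similar. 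Finally, the claim that the classes of $\mathcal{O}_X(m)$ span the numerical Grothendieck group of the hypersurface fails already for a quartic K3 surface of Picard rank at least $2$, so the ``pinning down'' of the central charge under Orlov's equivalence (\ref{Orlov}) needs more care outside the quintic case.
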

If $n=d=5$, i.e. $X$ is a quintic 3-fold, 
a stability condition above 
is expected to correspond
to the orbifold point $0 \in \mathcal{M}_K$
in Conjecture~\ref{conj:quintic}
called Gepner point. 
By this reason, a stability condition in 
Conjecture~\ref{conj:Gep}
is called \textit{Gepner type}. 
Some evidence of Conjecture~\ref{conj:Gep}
is available in~\cite{KST1}, \cite{TGep}, \cite{TGep3}. 
Suppose that Conjecture~\ref{conj:Gep}
is true for $n=d=5$. 
Then as an analogy of Fan-Jarvis-Ruan-Witten theory~\cite{FJRW1} in 
GW theory, we may define the DT type invariant 
\begin{align}\label{DTG}
\mathrm{DT}_G(\gamma) \in \mathbb{Q}, \ \gamma \in \mathrm{HH}_0(W)
\end{align}
which counts $\sigma_G$-semistable
 graded matrix factorizations $P^{\bullet}$ with 
$\mathrm{ch}(P^{\bullet})=\gamma$. 
Here $\mathrm{HH}_0(W)$ is the zero-th Hochschild homology of 
$\mathrm{HMF}(W)$,
and $\mathrm{ch}$ is the Chern character map on graded 
matrix factorizations (cf.~\cite{PoVa}).  
Because of the property of $\sigma_G$, the 
invariant (\ref{DTG}) should satisfy 
$\mathrm{DT}_G(\gamma)=\mathrm{DT}_{G}(\tau_{\ast}\gamma)$.  
Under the Orlov equivalence (\ref{Orlov}), 
the equivalence $\tau$ on the LHS corresponds to the
equivalence $\mathrm{ST}_{\mathcal{O}_X} \circ \mathcal{O}_X(1)$
on the RHS, 
where $\mathrm{ST}_{\mathcal{O}_X}$ is the Seidel-Thomas twist~\cite{ST}
associated to $\mathcal{O}_X$. 
Along with the argument in Subsection~\ref{subsec:new}, the 
existence of the invariant (\ref{DTG}) should imply
a hidden symmetry of the 
generating series of the original DT invariants on the quintic hypersurface
$X=(W=0)$ with respect to the equivalence 
$\mathrm{ST}_{\mathcal{O}_X} \circ \mathcal{O}_X(1)$. 

\subsection{S-duality conjecture for DT invariants}
Let us recall the original S-duality conjecture by Vafa-Witten~\cite{VW}. 
It 
predicts the (at least almost) modularity of  
the generating series of Euler characteristics of 
moduli spaces of stable torsion free sheaves on algebraic surfaces 
with a fixed rank and a first Chern class. 
We refer to~\cite{Goinv} for the developments on the S-duality 
conjecture so far. 
Instead of stable torsion free sheaves on algebraic surfaces,
we consider semistable pure two dimensional 
torsion sheaves on Calabi-Yau 3-folds, 
and DT invariants counting them. 
Let $X$ be a smooth projective Calabi-Yau 3-fold, 
$H$ an ample divisor on $X$ and 
fix a divisor class 
$P\in H^2(X, \mathbb{Z})$. 
We consider the following generating series
\begin{align}\label{intro:DT}
\mathrm{DT}_{H}(P)=\sum_{\beta\in H_2(X), n \in \mathbb{Q}}
\mathrm{DT}_{H}(0, P, -\beta, -n-P \cdot c_2(X)/24)q^n t^{\beta}. 
\end{align}
Here each coefficient counts $H$-semistable 
$E \in \mathrm{Coh}(X)$ whose Mukai vector (\textit{not} Chern character)
satisfies
$\mathrm{ch}(E) \cdot \sqrt{\mathrm{td}_X}=(0, P, -\beta, -n)$. 
As a 3-fold version of the S-duality conjecture, we 
expect that the series (\ref{intro:DT}) satisfies 
a modular transformation property of (almost) Jacobi 
forms.
(We
refer to~\cite{EZ} for a basic of Jacobi forms.)
Some computations of the invariants $\mathrm{DT}_H(0, P, -\beta, -n)$
are available in~\cite{GS3}, \cite{GS1}. 
Also the transformation formula of the series (\ref{intro:DT})
under a flop 
is obtained in~\cite{TodS}. 
Let us consider a flop diagram (\ref{small})
with $Y$ projective,
and $\omega$ an ample divisor on $Y$. 
We assume that the exceptional locus $C$, $C^{\dag}$
of $f$, $f^{\dag}$ are 
isomorphic to $\mathbb{P}^1$
with $p=f(C)=f^{\dag}(C^{\dag})$.  
Let 
$l$ be the scheme theoretic length of 
$f^{-1}(p)$ at the generic point of $C$. 
\begin{theorem}[\cite{TodS}]\label{thm:intro}
There exist
$n_j \in \mathbb{Z}_{\ge 1}$ for $1\le j\le l$ 
such that we have the following formula: 
\begin{align*}
&\mathrm{DT}_{f^{\dag \ast} \omega}(\phi_{\ast}P)
=\phi_{\ast}\mathrm{DT}_{f^{\ast}\omega}(P)
 \\
&\hspace{20mm}\cdot 
\prod_{j=1}^{l} \left\{ 
i^{jP \cdot C-1}
\eta(q)^{-1}\vartheta_{1, 1}(q, ((-1)^{\phi_{\ast}P}t)^{jC^{\dag}}) \right\}^{jn_j P \cdot C}. 
\end{align*}
Here $\phi_{\ast}$ is the variable change
$(n, \beta) \mapsto (n, \phi_{\ast}\beta)$, 
$\eta(q)$ is the Dedekind eta function and 
$\vartheta_{1, 1}(q, t)$ is the Jacobi theta function, 
given as follows:
\begin{align}\label{thetaab}
\eta(q)= q^{\frac{1}{24}}\prod_{k\ge 1}(1-q^k), \quad 
\vartheta_{1, 1}(q, t) =
\sum_{k \in \mathbb{Z}}
q^{\frac{1}{2}\left(k+\frac{1}{2} \right)^2}(-t)^{k+\frac{1}{2}}. 
\end{align}
\end{theorem}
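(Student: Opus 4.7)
The strategy follows the wall-crossing philosophy outlined in Subsection~\ref{subsec:new} and Subsection~\ref{subsec:ncDT}, using Van den Bergh's non-commutative resolution $A_Y$ and Bridgeland's flop equivalence (\ref{genMC}) as the bridge between $D^b\mathrm{Coh}(X)$ and $D^b\mathrm{Coh}(X^{\dag})$. The DT invariants of pure two-dimensional sheaves on the two sides cannot be compared directly, because $H$-semistability in $\mathrm{Coh}(X)$ is not preserved by $\Phi \circ \Psi$: a slope-semistable sheaf on $X$ with Chern class $(0,P,*,*)$ generally maps to a two-term complex on $X^{\dag}$. The task is to identify the discrepancy between $\phi_{\ast}\mathrm{DT}_{f^{\ast}\omega}(P)$ and $\mathrm{DT}_{f^{\dag \ast}\omega}(\phi_{\ast}P)$ as a controlled sum of wall-crossing contributions coming from objects supported on the exceptional curves.

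First I would introduce an appropriate triangulated subcategory of $D^b\mathrm{Coh}(X)$ adapted to the problem, generated by $H$-semistable pure two-dimensional sheaves with first Chern class $P$ together with objects in $D^b\mathrm{Coh}(X)$ set-theoretically supported on the exceptional fibre $f^{-1}(p)$. Bridgeland's flop equivalence restricts to an equivalence between such subcategories, and the equivalence $\Phi \circ \Psi$ preserves Mukai vectors up to a correction concentrated on the exceptional locus. Next, following the construction leading to Theorem~\ref{thm:main}, I would build a one-parameter family of weak stability conditions on this subcategory with filtration $\Gamma_\bullet$ chosen so that $\Gamma_0$ records the numerical type of sheaves concentrated on $f^{-1}(p)$ and $\Gamma/\Gamma_0$ records the transverse type $(0,P,*,*)$. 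The family should interpolate between a large-volume point corresponding to $f^{\ast}\omega$-Gieseker stability on $X$ and, after identification via $\Phi \circ \Psi$, a large-volume point corresponding to $f^{\dag \ast}\omega$-Gieseker stability on $X^{\dag}$.

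The next step is to locate and analyse the walls in this family. Because $\omega$ is ample on $Y$, the only walls arise from sub/quotient objects whose class lies in $\Gamma_0$, i.e.\ objects of the derived category supported on the exceptional $\mathbb{P}^1$. The classification of length-$l$ flops provides integers $n_j$ for $1 \le j \le l$ counting genus-zero BPS contributions of multi-cover degree $j$ on the exceptional curve, and each wall is labelled by an integer $j \in \{1,\ldots,l\}$ together with an intersection multiplicity against $P$. Applying the Joyce--Song / Kontsevich--Soibelman wall-crossing formula in the motivic Hall algebra of the weak stability space yields the ratio of generating series as a product of factors indexed by $j$. The D0-brane contribution from the fibre direction gives the $\eta(q)^{-1}$ factor, the genus-zero exceptional BPS contribution gives the Jacobi theta function $\vartheta_{1,1}(q,((-1)^{\phi_{\ast}P}t)^{jC^{\dag}})$ via the standard Gopakumar--Vafa product expansion evaluated on rational curves of class $j[C]$, the sign $(-1)^{\phi_{\ast}P}$ records the Behrend-function twist, and the exponent $jn_j P \cdot C$ arises because the transverse class $P$ meets each wall's exceptional stratum $P \cdot C$ times.

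\textbf{Main obstacles.} The hardest point is a precise local computation near $C$: one must show that the Behrend-weighted Hall-algebra wall-crossing identity reproduces exactly $\vartheta_{1,1}$ rather than some deformed theta-like series, and that the $n_j$ that appear coincide with the Katz-type genus-zero Gopakumar--Vafa invariants of the length-$l$ flop. A secondary technical point is that, unlike the rank-one curve-counting setting of Theorem~\ref{thm:main}, the relevant objects here have rank zero with fixed nontrivial first Chern class $P$, so the construction of the heart and the verification of the support property for the weak stability conditions require a careful choice of filtration $\Gamma_\bullet$ and of the torsion pair used to tilt. Once these local and structural ingredients are in place, the global formula follows by assembling the contributions from $j=1,\ldots,l$ and comparing with the explicit expansions (\ref{thetaab}).
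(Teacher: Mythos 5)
Your overall architecture is the right one and matches the strategy of \cite{TodS} as sketched in Subsections~\ref{subsec:new} and~\ref{subsec:ncDT}: pass through Van den Bergh's $A_Y$ and the flop equivalence (\ref{genMC}), set up a family of (weak) stability conditions whose filtration $\Gamma_{\bullet}$ isolates the classes supported on the exceptional locus, observe that ampleness of $\omega$ on $Y$ forces every wall to come from one-dimensional sheaves of class $(j[C],m)$, and feed in the identification of the $n_j$ with the genus-zero local invariants of the length-$l$ flopping curve. The gap is in the step that actually produces $\eta^{-1}\vartheta_{1,1}$, which is the heart of the computation, and your stated mechanism for it would give the wrong answer.

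You attribute $\vartheta_{1,1}$ to ``the standard Gopakumar--Vafa product expansion evaluated on rational curves of class $j[C]$'' and $\eta(q)^{-1}$ to a D0-brane contribution. Neither applies here. The GV/DT product form (as in Example~\ref{exii}(ii) or Theorem~\ref{thm:flop}) yields factors $(1-(-q)^k t)^{k n_j}$ with exponent growing linearly in $k$ and with $k$ ranging over positive integers only; a one-sided product with growing exponents does not resum to a theta function. Likewise, a zero-dimensional class pairs trivially under the Euler form with a class of shape $(0,P,\ast,\ast)$, so D0-branes contribute no walls to the rank-zero problem and there is no MacMahon-type factor, in contrast to the rank-one theory. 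The mechanism that does produce the theorem's factors is that the wall at $(j[C],m)$ contributes with an exponent governed by the Euler pairing $\chi\bigl((0,P,\ast,\ast),(0,0,jC,m)\bigr)=-jP\cdot C$, which is \emph{independent of} $m$; the resulting two-sided product $\prod_{m\in\mathbb{Z}}\bigl(1-((-1)^{\phi_{\ast}P}t)^{jC^{\dag}}q^m\bigr)^{jn_jP\cdot C}$ is then recognized as $\{\eta(q)^{-1}\vartheta_{1,1}(q,((-1)^{\phi_{\ast}P}t)^{jC^{\dag}})\}^{jn_jP\cdot C}$, up to the monomial prefactor $i^{jP\cdot C-1}$ and the $c_2(X)/24$-shift built into (\ref{intro:DT}), via the Jacobi triple product identity; equivalently, the quadratic exponents $q^{\frac{1}{2}(k+\frac{1}{2})^2}$ arise because twisting a sheaf with $c_1=P$ along the exceptional curve changes $\mathrm{ch}_3$ quadratically in the twisting parameter. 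A secondary gap: both $f^{\ast}\omega$ and $f^{\dag\ast}\omega$ are nef but not ample, so before any wall-crossing one must prove that $\mathrm{DT}_{f^{\ast}\omega}(v)$ is well defined (boundedness of semistable objects at this degenerate polarization); your interpolation treats $f^{\ast}\omega$ as an ordinary large-volume point, which it is not.
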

Although $f^{\ast}\omega$ is not ample, it is shown that 
the invariants $\mathrm{DT}_{f^{\ast}\omega}(v)$ are well-defined. 
Recall that $\eta(q)$ is a modular form of weight $1/2$, 
$\vartheta_{1, 1}(q, t)$ is a Jacobi form of weight $1/2$ and index $1/2$. 
The result of Theorem~\ref{thm:intro} shows that 
the series (\ref{intro:DT})
transforms under a flop 
by a multiplication of a meromorphic 
Jacobi form, which gives evidence of the S-duality 
conjecture for DT invariants. 

\subsection{Mathematical approach toward OSV conjecture}
In string theory, the OSV conjecture~\cite{OSV}
predicts
a certain approximation
\begin{align}\label{OSV}
\mathcal{Z}_{\rm{BH}} \sim \lvert \mathcal{Z}_{\rm{top}} \rvert^2
\end{align}
where the LHS is the partition function of black hole entropy, 
and the RHS is the partition function of topological string. 
A version of the above conjecture is mathematically
stated as an approximation between the generating series of 
DT invariants counting torsion sheaves on Calabi-Yau 3-folds, and 
the generating series of GW invariants. 
In~\cite{DM}, Denef-Moore proposed a relationship 
among the series (\ref{intro:DT})
and the generating series
of $I_{n, \beta}$, $P_{n, \beta}$
in order to give a derivation of (\ref{OSV}).  
A mathematical refinement of Denef-Moore conjecture 
is stated in~\cite{TodBG}. 
For simplicity, suppose that $\mathrm{Pic}(X)$ is generated
by an ample divisor $H$. 
For $m\in \mathbb{Z}_{>0}$, we define the following 
cut off series
\begin{align*}
I^m(q, t)=\sum_{(\beta, n) \in C(m)}
I_{n, \beta}q^n t^{\beta}, \quad 
P^m(q, t)=\sum_{(\beta, n) \in C(m)}
P_{n, \beta}q^n t^{\beta}.
\end{align*}
Here $C(m)=\{(\beta, n) : 
\beta H<mH^3/2, \lvert n \vert <m^2 H^3/2\}$. 
Moreover, we define the cut off 
generating series of 
D6-anti-D6 brane counting
\begin{align*}
\mathcal{Z}_{\mathrm{D6}-\overline{\mathrm{D6}}}
(q, t, w)
=\sum_{m_2-m_1=m}
&q^{H^3(m_1^3-m_2^3)/6}t^{H^2(m_1^2-m_2^2)/2}
w^{H^3 m^3/6 + Hc_2(X)m/12} \\
&I^m(qw^{-1}, q^{m_2H}tw^{-mH})
P^m(qw^{-1}, q^{-m_1H}t^{-1}w^{-mH}). 
\end{align*}
\begin{conjecture}[\cite{DM}, \cite{TodBG}]\label{DM}
For $m\gg 0$, we have the equality
\begin{align*}
\mathrm{DT}_H(mH)=
\frac{\partial}{\partial w}
\mathcal{Z}_{\mathrm{D6}-\overline{\mathrm{D6}}}
(q, t, w)\rvert_{w=-1}
\end{align*}
modulo terms of $q^n t^{\beta}$
with 
\begin{align*}
-\frac{H^3}{24}m^3 \left( 1-\frac{1}{m}\right)
\le n + \frac{(\beta \cdot H)^2}{2mH^3}. 
\end{align*}
\end{conjecture}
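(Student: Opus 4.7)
The natural strategy is to reduce Conjecture~\ref{DM} to the Bogomolov-Gieseker type inequality of Conjecture~\ref{conj:BMT} and then run a wall-crossing argument in a space of stability conditions, in the spirit of Subsection~\ref{subsec:new}. Granting Conjecture~\ref{conj:BMT}, one has Bridgeland stability conditions $\sigma_{B,\omega}=(Z_{B,\omega},\mathcal{A}_{B,\omega})$ of the double-tilt form, and for $(B,\omega)$ in a chamber containing the large-volume limit the Gieseker $H$-semistable pure two-dimensional sheaves with $\mathrm{ch}(E)=(0,mH,-\beta,-n-\tfrac{m}{24}Hc_2(X))$ are exactly the $\sigma_{B,\omega}$-semistable objects of that class. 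Thus the LHS $\mathrm{DT}_H(mH)$ is realized as a DT-type count in the derived category, opening the door to wall-crossing.

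First I would choose a one-parameter path of (weak) stability conditions $\sigma_{B_s,\omega_s}$ running from the ``D4 chamber'' into a ``D6--anti-D6 chamber,'' in which the semistable objects of the relevant Mukai class admit a two-step Harder-Narasimhan filtration: a rank $+1$ factor isomorphic to a twist $I_1\otimes \mathcal{O}_X(m_1H)$ of an ideal sheaf of a one- or zero-dimensional subscheme, and a rank $-1$ factor isomorphic to a shifted stable-pair-type complex $E_2\otimes \mathcal{O}_X(-m_2H)[1]$. The constraint $m_2-m_1=m$ in $\mathcal{Z}_{\mathrm{D6}-\overline{\mathrm{D6}}}$ reflects preservation of $\mathrm{ch}_1$ along this path; the prefactors $q^{H^3(m_1^3-m_2^3)/6}$ and $t^{H^2(m_1^2-m_2^2)/2}$ are exactly the Chern-character-change contributions of passing between $\mathcal{O}_X(m_1H)$ and $\mathcal{O}_X(m_2H)[1]$, and the $w$-weights track the $\mathrm{ch}_3$-shift by $H^3m^3/6+Hc_2(X)m/12$ coming from the formal rank difference.

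Next I would apply the Joyce-Song / Kontsevich-Soibelman wall-crossing identity in the motivic Hall algebra, summing over admissible $(m_1,m_2)$ with $m_2-m_1=m$, and use the variable changes dictated by the Chern characters of the two rank-one components to identify the Hall-algebra output with the D6--anti-D6 generating series. The operator $\partial/\partial w|_{w=-1}$ extracts the linear-in-$w$ contribution corresponding to single D2-D0 bound states between the two rank-one constituents, with the specialization $w=-1$ implementing the Behrend-function sign $(-1)^{\chi(\text{Ext}^1)}$ weighting the extension classes. The cut-off region $C(m)$ is calibrated so that for $(\beta,n)\in C(m)$ the only contributing rank-one factors $I_1$ and $E_2$ themselves have Chern classes inside $C(m)$; all other wall-crossing contributions are forced below the threshold $n+(\beta\cdot H)^2/(2mH^3)<-\tfrac{H^3}{24}m^3(1-1/m)$ by the quadratic bound that follows from the \emph{strong} form of the BG inequality $\mathrm{ch}_3^B(E)\le\tfrac{1}{18}\mathrm{ch}_1^B(E)\omega^2$ applied to any potential destabilizing piece.

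The principal obstacle is of course Conjecture~\ref{conj:BMT} itself, which is open for general projective Calabi-Yau 3-folds and currently known only for $\mathbb{P}^3$, smooth quadric 3-folds, and certain abelian 3-folds. A secondary but nontrivial difficulty is the combinatorial bookkeeping needed to match the Hall-algebra wall-crossing output against the precise closed form for $\mathcal{Z}_{\mathrm{D6}-\overline{\mathrm{D6}}}$: the normalization by $\sqrt{\mathrm{td}_X}$ in the definition of the Mukai vector, the overall $Hc_2(X)/24$ shift, and the sign specialization $w=-1$ must all be tracked through the variable changes $(q,t)\mapsto(qw^{-1},q^{\pm m_i H}t^{\pm1}w^{-mH})$, and it is here that the stronger evaluation in Conjecture~\ref{conj:BMT} (as opposed to the weaker bound $\mathrm{ch}_3^B(E)<\mathrm{ch}_1^B(E)\omega^2/2$) is essential to close the estimate at the claimed error order.
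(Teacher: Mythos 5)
The statement you are addressing is a \emph{conjecture}: the paper gives no proof of Conjecture~\ref{DM}, and the strongest result it records is Theorem~\ref{thm:DM}, namely that the \emph{unweighted} version holds \emph{conditionally} on Conjecture~\ref{conj:BMT}, with the proof in \cite{TodBG}. Your outline is, in spirit, a faithful sketch of that conditional argument --- realizing $\mathrm{DT}_H(mH)$ as a count of Bridgeland/tilt-semistable objects, crossing walls into a regime where the relevant objects split into a rank $+1$ ideal-sheaf factor and a rank $-1$ stable-pair factor, and using the strong form of the Bogomolov--Gieseker inequality to push all other wall contributions below the stated cut-off. Your observation that the weaker bound $\mathrm{ch}_3^B(E)<\mathrm{ch}_1^B(E)\omega^2/2$ would not suffice matches the paper's own remark. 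So as a description of the known partial result, your strategy is essentially the intended one.

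As a proof of Conjecture~\ref{DM} itself, however, there are two genuine gaps. First, the entire argument is conditional on Conjecture~\ref{conj:BMT}, which is open for projective Calabi--Yau 3-folds (it is known only for $\mathbb{P}^3$, the quadric 3-fold, and certain abelian 3-folds, none of which are Calabi--Yau in the sense used here); you acknowledge this, but it means the conjecture is not established. Second, and independently of BMT, your argument at best yields the \emph{unweighted} version: the invariants $\mathrm{DT}_H(0,P,-\beta,\ast)$, $I_{n,\beta}$ and $P_{n,\beta}$ appearing in Conjecture~\ref{DM} are Behrend-weighted, and running the Joyce--Song/Kontsevich--Soibelman wall-crossing with Behrend weights for moduli of \emph{complexes} requires the critical-locus description of Theorem~\ref{thm:CS} for the moduli spaces of objects in the derived category (cf.\ the remark following Theorem~\ref{thm:main}), which is not available. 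Your appeal to ``the Behrend-function sign'' in the $w=-1$ specialization silently assumes exactly this missing input. Theorem~\ref{thm:DM} is stated for the unweighted version precisely for this reason.
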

In~\cite{TodBG}, 
we proved the following: 
\begin{theorem}[\cite{TodBG}]\label{thm:DM}
The unweighted version of Conjecture~\ref{DM}
is true if we assume Conjecture~\ref{conj:BMT}.
\end{theorem}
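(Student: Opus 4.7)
The plan is to realize both sides of Conjecture~\ref{DM} as generating series of (unweighted) DT type invariants counting $\sigma$-semistable objects in a single triangulated subcategory of $D^b\mathrm{Coh}(X)$ for two different weak stability conditions $\sigma$, and then relate the two counts by the Joyce-Song wall-crossing formula in the style of Subsection~\ref{subsec:new}. The role of Conjecture~\ref{conj:BMT} is both to guarantee that the double tilted heart $\mathcal{A}_{B,\omega}$ of~\cite{BMT} genuinely defines a weak stability condition on the relevant category, and to ensure that unwanted strictly semistable configurations contribute only outside the prescribed truncation region $C(m)$.

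First I would introduce a triangulated subcategory $\mathcal{D}_m \subset D^b\mathrm{Coh}(X)$ containing the line bundles $\mathcal{O}_X(m_1 H)$ and their shifts $\mathcal{O}_X(m_2 H)[1]$ for $m_1$, $m_2$ in a suitable range, together with one and zero dimensional sheaves and the two-dimensional torsion sheaves with $c_1 = mH$. Inside $\mathcal{D}_m$, a D6-anti-D6 bound state of type $(m_1, m_2)$ with $m_2 - m_1 = m$ arises as a two-term complex whose cohomology sheaves encode an ideal sheaf twisted by $\mathcal{O}_X(m_1 H)$ and a stable pair twisted by $\mathcal{O}_X(m_2 H)$; after taking the cone, such an object has Mukai vector of the same form $(0, mH, -\beta, -n)$ as the two-dimensional torsion sheaves we wish to count. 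The variable changes $q \mapsto q w^{-1}$ and $t \mapsto q^{\pm m_i H} t w^{-mH}$ appearing in $\mathcal{Z}_{\mathrm{D6}-\overline{\mathrm{D6}}}$ are exactly those induced by the twists by $\mathcal{O}_X(m_i H)$ on the Chern character, which is why this is the natural D6-anti-D6 generating series.

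Next I would construct a one-parameter family of weak stability conditions on $\mathcal{D}_m$ with central charge modeled on $Z_{B,\omega}$ for $B$ near $(m_1 + m_2)H/2$ and $\omega$ proportional to $H$, tilted twice as in the BMT construction. Conjecture~\ref{conj:BMT} enters here to verify the support property for $\mathcal{A}_{B,\omega}$ and the weak see-saw property required for Harder-Narasimhan filtrations. One extremal limit of the family should force the semistable objects of Mukai vector $(0, mH, -\beta, -n)$ to be the $H$-Gieseker semistable two-dimensional sheaves, contributing (the unweighted) $\mathrm{DT}_H(mH)$; the other extremal limit should split semistable objects along the D6-anti-D6 decomposition, yielding the cut off series $\mathcal{Z}_{\mathrm{D6}-\overline{\mathrm{D6}}}(q, t, w)$, with the operation $\partial/\partial w \rvert_{w=-1}$ implementing the Joyce-Song sign convention that extracts the two-term bound-state contribution.

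The final step is to run the Joyce-Song wall-crossing formula between the two limits. The hard part will be to track which walls contribute to coefficients $q^n t^\beta$ inside $C(m)$ and to show that every remaining wall contribution is absorbed into the error term specified by $-H^3 m^3(1 - 1/m)/24 \le n + (\beta \cdot H)^2/(2 m H^3)$. Here the sharp coefficient $1/18$ in Conjecture~\ref{conj:BMT} is decisive: it controls precisely the third Chern character of the semistable constituents that could in principle appear inside $C(m)$, whereas the weaker bound $\mathrm{ch}_3^B(E) < \mathrm{ch}_1^B(E)\omega^2/2$ would leave room for spurious wall contributions, as noted in the remark after Conjecture~\ref{conj:BMT}. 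A secondary technical obstacle is to verify combinatorially that the accumulated wall-crossing on the D6-anti-D6 side reassembles into the precise series $\mathcal{Z}_{\mathrm{D6}-\overline{\mathrm{D6}}}$ with the $I^m$ and $P^m$ factors in its displayed form, which I would handle by inducting on the wall configuration and applying Theorems~\ref{thm:main} and~\ref{thm:DTPT} to identify the rank-one contributions.
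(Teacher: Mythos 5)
Your proposal follows essentially the same route as the paper's (i.e.\ \cite{TodBG}'s) argument: realize $\mathrm{DT}_H(mH)$ and the D6-anti-D6 series as counts of semistable objects at two ends of a family of weak stability conditions built from the double tilting of \cite{BMT}, run the Joyce--Song wall-crossing between them, and use the sharp bound $\mathrm{ch}_3^B(E) \le \frac{1}{18}\mathrm{ch}_1^B(E)\omega^2$ to control which wall contributions land inside the truncation region $C(m)$. One small correction: the weak see-saw property and the Harder--Narasimhan filtrations for tilt stability are unconditional consequences of the classical Bogomolov--Gieseker inequality, so Conjecture~\ref{conj:BMT} is not needed for those; its role is confined to the positivity property of $Z_{B,\omega}$ on $\mathcal{A}_{B,\omega}$ and, decisively, to the $\mathrm{ch}_3$ estimate you correctly single out.
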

Even if Conjecture~\ref{DM} is  
proved, still the relationship (\ref{OSV})
is not obvious. If we follow the arguments in~\cite{DM}, 
at least we need to prove S-duality conjecture for DT 
invariants in the previous subsection and 
MNOP conjecture. 
Moreover we need to make a mathematical understanding 
of the approximation $\sim$
in (\ref{OSV}). Although the relationship (\ref{OSV}) is 
motivated by string theory, 
it seems to involve deep and interesting mathematics.

\providecommand{\bysame}{\leavevmode\hbox to3em{\hrulefill}\thinspace}
\providecommand{\MR}{\relax\ifhmode\unskip\space\fi MR }
\providecommand{\MRhref}[2]{%
  \href{http://www.ams.org/mathscinet-getitem?mr=#1}{#2}
}
\providecommand{\href}[2]{#2}


\begin{thebibliography}{MNOP06}

\bibitem[AB13]{AB}
D.~Arcara and A.~Bertram, \emph{Bridgeland-stable moduli spaces for {K}-trivial
  surfaces. {W}ith an appendix by {M}ax {L}ieblich}, J.~Eur.~Math.~Soc.~
  \textbf{15} (2013), 1--38.

\bibitem[Bay09]{Bay}
A.~Bayer, \emph{Polynomial {B}ridgeland stability conditions and the large
  volume limit}, Geom.~Topol.~ \textbf{13} (2009), 2389--2425.

\bibitem[BB07]{BeBryan}
K.~Behrend and J.~Bryan, \emph{Super-rigid {D}onaldson-{T}homas invariants},
  Math.~Res.~Lett.~ \textbf{14} (2007), 559--571.

\bibitem[BBMT]{BBMT}
A.~Bayer, A.~Bertram, E.~Macri, and Y.~Toda, \emph{Bridgeland stability
  conditions on 3-folds {II}: {A}n application to {F}ujita's conjecture},
  J.~Algebraic Geom.~(to appear), arXiv:1106.3430.

\bibitem[BC09]{BorCal}
L.~Borisov and A.~Caldararu, \emph{The {P}faffian-{G}rassmannian derived
  equivalence}, J.~Algebraic.~Geom.~ \textbf{18} (2009), 201--222.

\bibitem[Beh09]{Beh}
K.~Behrend, \emph{Donaldson-{T}homas invariants via microlocal geometry},
  Ann.~of Math \textbf{170} (2009), 1307--1338.

\bibitem[Bez]{Bez}
R.~Bezrukavnikov, \emph{Perverse coherent sheaves (after {D}eligne)}, preprint,
  arXiv:0005152.

\bibitem[BF97]{BF}
K.~Behrend and B.~Fantechi, \emph{The intrinsic normal cone}, Invent.~Math.~
  \textbf{128} (1997), 45--88.

\bibitem[BF08]{BBr}
\bysame, \emph{Symmetric obstruction theories and {H}ilbert schemes of points
  on threefolds}, Algebra Number Theory \textbf{2} (2008), 313--345.

\bibitem[BKR01]{BKR}
T.~Bridgeland, A.~King, and M.~Reid, \emph{The {M}c{K}ay correspondence as an
  equivalence of derived categories}, J.~Amer.~Math.~Soc.~ \textbf{14} (2001),
  535--554.

\bibitem[BM11]{BaMa}
A.~Bayer and E.~Macri, \emph{The space of stability conditions on the local
  projective plane}, Duke Math.~J.~ \textbf{160} (2011), 263--322.

\bibitem[BMT14]{BMT}
A.~Bayer, E.~Macri, and Y.~Toda, \emph{Bridgeland stability conditions on
  3-folds {I}: {B}ogomolov-{G}ieseker type inequalities}, J.~Algebraic Geom.~
  \textbf{23} (2014), 117--163.

\bibitem[Bog78]{Bog}
F.~A. Bogomolov, \emph{Holomorphic tensors and vector bundles on projective
  manifolds}, Izv.~Akad.~Nauk SSSR Ser.~Mat.~ \textbf{42} (1978), 1227--1287.

\bibitem[Bri02]{Br1}
T.~Bridgeland, \emph{Flops and derived categories}, Invent. Math \textbf{147}
  (2002), 613--632.

\bibitem[Bri06]{Brs4}
\bysame, \emph{Stability conditions on a non-compact {C}alabi-{Y}au threefold},
  Comm.~Math.~Phys.~ \textbf{266} (2006), 715--733.

\bibitem[Bri07]{Brs1}
\bysame, \emph{Stability conditions on triangulated categories}, Ann.~of Math
  \textbf{166} (2007), 317--345.

\bibitem[Bri08]{Brs2}
\bysame, \emph{Stability conditions on ${K}$3 surfaces}, Duke Math.~J.~
  \textbf{141} (2008), 241--291.

\bibitem[Bri09]{Brs6}
\bysame, \emph{Spaces of stability conditions}, Proc.~Sympos.~Pure Math.~
  \textbf{80} (2009), 1--21, Algebraic Geometry-Seatle 2005.

\bibitem[Bri11]{BrH}
\bysame, \emph{Hall algebras and curve-counting invariants},
  J.~Amer.~Math.~Soc.~ \textbf{24} (2011), 969--998.

\bibitem[Cal]{Cala}
J.~Calabrese, \emph{Donaldson-{T}homas invariants on {F}lops}, preprint,
  arXiv:1111.1670.

\bibitem[Che96]{Cheah}
J.~Cheah, \emph{On the cohomology of {H}ilbert schemes of points}, J.~Algebraic
  Geom.~ \textbf{5} (1996), 479--511.

\bibitem[dB04]{MVB}
M.~Van den Bergh, \emph{Three dimensional flops and noncommutative rings}, Duke
  Math.~J.~ \textbf{122} (2004), 423--455.

\bibitem[DM]{DM}
F.~Denef and G.~Moore, \emph{Split states, {E}ntropy {E}nigmas, {H}oles and
  {H}alos}, arXiv:hep-th/0702146.

\bibitem[Dou02]{Dou2}
M.~Douglas, \emph{Dirichlet branes, homological mirror symmetry, and
  stability}, Proceedings of the 1998 ICM (2002), 395--408.

\bibitem[EZ85]{EZ}
M.~Eichler and D.~Zagier, \emph{The theory of {J}acobi forms}, Progress in
  Mathematics, vol.~55, Birkh\"auser Boston, 1985.

\bibitem[FJR]{FJRW1}
H.~Fan, T.~J. Jarvis, and Y.~Ruan, \emph{The {W}itten equation and its virtual
  fundamental cycle}, preprint, arXiv:0712.4025.

\bibitem[G\"90]{Got}
L.~G\"ottsche, \emph{The {B}etti numbers of the {H}ilbert scheme of points on a
  smooth projective surface}, Math.~Ann.~ \textbf{286} (1990), 193--207.

\bibitem[G\"09]{Goinv}
\bysame, \emph{Invariants of {M}oduli {S}paces and {M}odular {F}orms},
  Rend.~Istit.~Mat.~Univ.~Trieste \textbf{41} (2009), 55--76.

\bibitem[Gie79]{Gie}
D.~Gieseker, \emph{On a theorem of {B}ogomolov on {C}hern {C}lasses of {S}table
  {B}undles}, Amer.~J.~Math.~ \textbf{101} (1979), 77--85.

\bibitem[GSa]{GS3}
A.~Gholampour and A.~Sheshmani, \emph{Donaldson-{T}homas {I}nvariants of
  2-{D}imensional sheaves inside threefolds and modular forms}, preprint,
  arXiv:1309.0050.

\bibitem[GSb]{GS1}
\bysame, \emph{Generalized {D}onaldson-{T}homas {I}nvariants of 2-{D}imensional
  sheaves on local $\mathbb{P}^2$}, preprint, arXiv:1309.0056.

\bibitem[HL97]{Hu}
D.~Huybrechts and M.~Lehn, \emph{Geometry of moduli spaces of sheaves}, Aspects
  in Mathematics, vol. E31, Vieweg, 1997.

\bibitem[HRS96]{HRS}
D.~Happel, I.~Reiten, and S.~O. Smal$\o$, \emph{Tilting in abelian categories
  and quasitilted algebras}, Mem.~Amer.~Math.~Soc, vol. 120, 1996.

\bibitem[HT06]{HT}
D.~Huybrechts and R.~P. Thomas, \emph{$\mathbb{P}$-objects and autoequivalences
  of derived categories}, Math.~Res.~Lett.~ (2006), 87--98.

\bibitem[Ina02]{Inaba}
M.~Inaba, \emph{Toward a definition of moduli of complexes of coherent sheaves
  on a projective scheme}, J.~Math.~Kyoto Univ.~ \textbf{42-2} (2002),
  317--329.

\bibitem[JS12]{JS}
D.~Joyce and Y.~Song, \emph{A theory of generalized {D}onaldson-{T}homas
  invariants}, Mem.~Amer.~Math.~Soc.~ \textbf{217} (2012).

\bibitem[Kas04]{Kashi}
M.~Kashiwara, \emph{$t$-structures on the derived categories of holonomic
  $\mathcal{D}$-modules and cohereht $\mathcal{O}$-modules}, Mosc.~Math.~J.~
  \textbf{981} (2004), 847--868.

\bibitem[Kaw02]{Ka1}
Y.~Kawamata, \emph{${D}$-equivalence and ${K}$-equivalence},
  J.~Differential~Geom.~ \textbf{61} (2002), 147--171.

\bibitem[Kon95]{Kon}
M.~Kontsevich, \emph{Homological algebra of mirror symmetry}, Proceedings of
  {ICM}, vol.~1, Birkh$\ddot{\textrm{a}}$user, Basel, 1995.

\bibitem[KS]{K-S}
M.~Kontsevich and Y.~Soibelman, \emph{Stability structures, motivic
  {D}onaldson-{T}homas invariants and cluster transformations}, preprint,
  arXiv:0811.2435.

\bibitem[KST07]{KST1}
H.~Kajiura, K.~Saito, and A.~Takahashi, \emph{Matrix factorizations and
  representations of quivers {II}. {T}ype {ADE} case}, Advances in Math
  \textbf{211} (2007), 327--362.

\bibitem[Li06]{Li}
J.~Li, \emph{Zero dimensional {D}onaldson-{T}homas invariants of threefolds},
  Geom.~Topol.~ \textbf{10} (2006), 2117--2171.

\bibitem[Lie06]{LIE}
M.~Lieblich, \emph{Moduli of complexes on a proper morphism}, J.~Algebraic
  Geom.~ \textbf{15} (2006), 175--206.

\bibitem[LP09]{LP}
M.~Levine and R.~Pandharipande, \emph{Algebraic cobordism revisited},
  Invent.~Math.~ \textbf{176} (2009), 63--130.

\bibitem[LQ06]{Li-Qin}
W.~P. Li and Z.~Qin, \emph{On the euler numbers of certain moduli spaces of
  curves and points}, Comm.~Anal.~Geom.~ \textbf{14} (2006), 387--410.

\bibitem[Mac]{MaBo}
E.~Macri, \emph{A generalized {B}ogomolov-{G}ieseker inequality for the
  three-dimensional projective space}, preprint, arXiv:1207.4980.

\bibitem[Mac07]{Mac2}
\bysame, \emph{Stability conditions on curves}, Math.~Res.~Lett.~ (2007),
  657--672.

\bibitem[MNOP06]{MNOP}
D.~Maulik, N.~Nekrasov, A.~Okounkov, and R.~Pandharipande,
  \emph{Gromov-{W}itten theory and {D}onaldson-{T}homas theory. {I}},
  Compositio.~Math \textbf{142} (2006), 1263--1285.

\bibitem[MPa]{MaPi1}
A.~Maciocia and D.~Piyaratne, \emph{Fourier-{M}ukai {T}ransforms and
  {B}ridgeland {S}tability {C}onditions on {A}belian {T}hreefolds}, preprint,
  arXiv:1304.3887.

\bibitem[MPb]{MaPi2}
\bysame, \emph{Fourier-{M}ukai {T}ransforms and {B}ridgeland {S}tability
  {C}onditions on {A}belian {T}hreefolds {I}{I}}, preprint, arXiv:1310.0299.

\bibitem[Muk81]{Mu1}
S.~Mukai, \emph{Duality between ${D}({X})$ and ${D}(\hat{X})$ with its
  application to picard sheaves}, Nagoya Math.~J.~ \textbf{81} (1981),
  101--116.

\bibitem[Nag]{Nhig}
K.~Nagao, \emph{On higher rank {D}onaldson-{T}homas invariants}, preprint,
  arXiv:1002.3608.

\bibitem[Nag13]{Nagao}
\bysame, \emph{Donaldson-{T}homas theory and cluster algebras}, Duke Math.~J.~
  \textbf{162} (2013), 1313--1367.

\bibitem[NN11]{NN}
K.~Nagao and H.~Nakajima, \emph{Counting invariant of perverse coherent sheaves
  and its wall-crossing}, Int.~Math.~Res.~Not.~ (2011), 3855--3938.

\bibitem[Orl97]{Or1}
D.~Orlov, \emph{On {E}quivalences of derived categories and ${K}$3 surfaces},
  J.~Math.~Sci (New York) \textbf{84} (1997), 1361--1381.

\bibitem[Orl09]{Orsin}
D.~Orlov, \emph{Derived categories of coherent sheaves and triangulated
  categories of singularities}, Algebra, arithmetic, and geometry: in honor of
  Yu.~I.~Manin, Progr.~Math.~ \textbf{270} (2009), 503--531.

\bibitem[OSV04]{OSV}
H.~Ooguri, A.~Strominger, and C.~Vafa, \emph{Black hole attractors and the
  topological string}, Phys.~Rev.~D \textbf{70} (2004), arXiv:hep-th/0405146.

\bibitem[PP]{PP}
R.~Pandharipande and A.~Pixton, \emph{Gromov-{W}itten/{P}airs correspondence
  for the quintic 3-fold}, preprint, arXiv:1206.5490.

\bibitem[PT09]{PT}
R.~Pandharipande and R.~P. Thomas, \emph{Curve counting via stable pairs in the
  derived category}, Invent.~Math.~ \textbf{178} (2009), 407--447.

\bibitem[PV12]{PoVa}
A.~Polishchuk and A.~Vaintrob, \emph{Chern characters and
  {H}irzebruch-{R}iemann-{R}och formula for matrix factorizations}, Duke
  Math.~J.~ \textbf{161} (2012), 1863--1926.

\bibitem[Sch]{BSch}
B.~Schmidt, \emph{A generalized {B}ogomolov-{G}ieseker inequality for the
  smooth quadric threefold}, preprint, arXiv:1309.4265.

\bibitem[ST01]{ST}
P.~Seidel and R.~P. Thomas, \emph{Braid group actions on derived categories of
  coherent sheaves}, Duke Math.~J.~ \textbf{108} (2001), 37--107.

\bibitem[ST11]{StTh}
J.~Stoppa and R.~P. Thomas, \emph{Hilbert schemes and stable pairs: {GIT} and
  derived category wall crossings}, Bull.~Soc.~Math.~France \textbf{139}
  (2011), 297--339.

\bibitem[Sta99]{Stan}
R.~Stanley, \emph{Enumerative combinatorics}, Cambridge University Press, 1999.

\bibitem[Sto12]{Stop}
J.~Stoppa, \emph{D0-{D}6 states counting and {GW} invariants},
  Lett.~Math.~Phys.~ \textbf{102} (2012), 149--180.

\bibitem[Sze08]{Sz}
B.~Szendr{\H o}i, \emph{Non-commutative {D}onaldson-{T}homas theory and the
  conifold}, Geom.~Topol.~ \textbf{12} (2008), 1171--1202.

\bibitem[Tho00]{Thom}
R.~P. Thomas, \emph{A holomorphic {C}asson invariant for {C}alabi-{Y}au 3-folds
  and bundles on ${K3}$-fibrations}, J.~Differential.~Geom \textbf{54} (2000),
  367--438.

\bibitem[Toda]{TodS}
Y.~Toda, \emph{Flops and {S}-duality conjecture}, preprint, arXiv:1311.7476.

\bibitem[Todb]{TGep2}
\bysame, \emph{Gepner point and strong {B}ogomolov-{G}ieseker inequality for
  quintic 3-folds}, to appear in Professor Kawamata's 60th volume,
  arXiv:1305.0345.

\bibitem[Todc]{TGep3}
\bysame, \emph{Gepner type stability condition via {O}rlov/{K}uznetsov
  equivalence}, preprint, arXiv:1308.3791.

\bibitem[Todd]{TGep}
\bysame, \emph{Gepner type stability conditions on graded matrix
  factorizations}, preprint, arXiv:1302.6293.

\bibitem[Tod08a]{ToBPS}
\bysame, \emph{Birational {C}alabi-{Y}au 3-folds and {BPS} state counting},
  Communications in Number Theory and Physics \textbf{2} (2008), 63--112.

\bibitem[Tod08b]{Tst3}
\bysame, \emph{Moduli stacks and invariants of semistable objects on {K}3
  surfaces}, Advances in Math \textbf{217} (2008), 2736--2781.

\bibitem[Tod08c]{Tst}
\bysame, \emph{Stability conditions and crepant small resolutions},
  Trans.~Amer.~Math.~Soc. \textbf{360} (2008), 6149--6178.

\bibitem[Tod09a]{Tolim}
\bysame, \emph{Limit stable objects on {C}alabi-{Y}au 3-folds}, Duke Math.~J.~
  \textbf{149} (2009), 157--208.

\bibitem[Tod09b]{Tst2}
\bysame, \emph{Stability conditions and {C}alabi-{Y}au fibrations},
  J.~Algebraic Geom.~ \textbf{18} (2009), 101--133.

\bibitem[Tod10a]{Tcurve1}
\bysame, \emph{Curve counting theories via stable objects~{I}: {DT/PT}
  correspondence}, J.~Amer.~Math.~Soc.~ \textbf{23} (2010), 1119--1157.

\bibitem[Tod10b]{Tolim2}
\bysame, \emph{Generating functions of stable pair invariants via
  wall-crossings in derived categories}, Adv.~Stud.~Pure Math.~ \textbf{59}
  (2010), 389--434, New developments in algebraic geometry, integrable systems
  and mirror symmetry (RIMS, Kyoto, 2008).

\bibitem[Tod10c]{Trk2}
\bysame, \emph{On a computation of rank two {D}onaldson-{T}homas invariants},
  Communications in Number Theory and Physics \textbf{4} (2010), 49--102.

\bibitem[Tod11]{Tcurve3}
\bysame, \emph{Curve counting invariants around the conifold point},
  J.~Differential Geom.~ \textbf{89} (2011), 133--184.

\bibitem[Tod12a]{Tsurvey}
\bysame, \emph{Stability conditions and curve counting invariants on
  {C}alabi-{Y}au 3-folds}, Kyoto Journal of Mathematics \textbf{52} (2012),
  1--50.

\bibitem[Tod12b]{TodK3}
\bysame, \emph{Stable pairs on local {K}3 surfaces}, J.~Differential.~Geom.~
  \textbf{92} (2012), 285--370.

\bibitem[Tod13a]{TodBG}
\bysame, \emph{Bogomolov-{G}ieseker type inequality and counting invariants},
  Journal of Topology \textbf{6} (2013), 217--250.

\bibitem[Tod13b]{Tcurve2}
\bysame, \emph{Curve counting theories via stable objects~{II}. {DT}/nc{DT}
  flop formula}, J.~Reine Angew.~Math.~ \textbf{675} (2013), 1--51.

\bibitem[Tod13c]{Todext}
\bysame, \emph{Stability conditions and extremal contractions}, Math.~Ann.~
  \textbf{357} (2013), 631--685.

\bibitem[VW94]{VW}
C.~Vafa and E.~Witten, \emph{A {S}trong {C}oupling {T}est of {S}-{D}uality},
  Nucl.~Phys.~B \textbf{431} (1994).

\bibitem[Wal05]{Wal}
J.~Walcher, \emph{Stability of {L}andau-{G}inzburg branes}, Journal of
  Mathematical Physics \textbf{46} (2005), arXiv:hep-th/0412274.

\bibitem[You09]{Young1}
B.~Young, \emph{Computing a pyramid partition generating function with dimer
  shuffling}, J.~Combin.~Theory Ser.~ (2009), 334--350.

\end{thebibliography}
\end{document}